\theoremstyle{plain}
\newtheorem{theorem}{Theorem}[section]
\newtheorem{proposition}[theorem]{Proposition}
\newtheorem{lemma}[theorem]{Lemma}
\newtheorem{corollary}[theorem]{Corollary}
\theoremstyle{definition}
\newtheorem{definition}[theorem]{Definition}
\newtheorem{example}[theorem]{Example}
\theoremstyle{remark}
\newtheorem{remark}[theorem]{Remark}
\newcommand{\sheaf}[1]{\mathscr{#1}}
\newcommand{\LL}{\sheaf{L}}
\newcommand{\OO}{\sheaf{O}}
\newcommand{\EE}{\sheaf{E}}
\newcommand{\BB}{\sheaf{B}}
\newcommand{\RR}{\sheaf{R}}
\newcommand{\sAA}{\sheaf{A}}
\newcommand{\valus}[1]{\mathsf{#1}}
\newcommand{\F}{\mathbb F}
\newcommand{\Z}{\mathbb Z}
\newcommand{\A}{\mathbb A}
\newcommand{\C}{\mathbb C}
\renewcommand{\P}{\mathbb P}
\newcommand{\Q}{\mathbb Q}
\newcommand{\Gm}{\mathbb{G}_{\mathrm{m}}}
\DeclareMathOperator{\Gal}{\mathrm{Gal}}
\DeclareMathOperator{\Br}{\mathrm{Br}}
\newcommand{\inv}{^{-1}}
\newcommand{\sep}{^\mathrm{s}}
\newcommand{\mult}{^{\times}}
\newcommand{\multtwo}{^{\times 2}}
\newcommand{\tensor}{\otimes}
\newcommand{\ol}[1]{\overline{#1}}
\newcommand{\wt}[1]{\widetilde{#1}}
\newcommand{\et}{\mathrm{\acute{e}t}}
\newcommand{\univ}{\mathrm{univ}}
\newcommand{\Het}{H_{\et}}
\newcommand{\ur}{\mathrm{nr}}
\newcommand{\CH}{\mathrm{CH}}
\begin{document}

\title[Unramified Brauer groups in characteristic two]{Unramified
Brauer groups of conic bundle threefolds in characteristic two}
\author[Auel]{Asher Auel}
\address{Asher Auel, Department of Mathematics, Yale University\\
New Haven, Connecticut 06511, USA}
\email{asher.auel@yale.edu}

\author[Bigazzi]{Alessandro Bigazzi}
\address{Alessandro Bigazzi, Mathematics Institute, University of Warwick\\
Coventry CV4 7AL, England}
\email{A.Bigazzi@warwick.ac.uk}

\author[B\"ohning]{Christian B\"ohning}
\address{Christian B\"ohning, Mathematics Institute, University of Warwick\\
Coventry CV4 7AL, England}
\email{C.Boehning@warwick.ac.uk}

\author[Bothmer]{Hans-Christian Graf von Bothmer}
\address{Hans-Christian Graf von Bothmer, Fachbereich Mathematik der Universit\"at Hamburg\\
Bundesstra\ss e 55\\
20146 Hamburg, Germany}
\email{hans.christian.v.bothmer@uni-hamburg.de}


\date{\today}


\begin{abstract}
We establish a formula for computing the unramified Brauer group of
tame conic bundle threefolds in characteristic $2$. The formula
depends on the arrangement and residue double covers of the
discriminant components, the latter being governed by Artin--Schreier
theory (instead of Kummer theory in characteristic not $2$). We use
this to give new examples of threefold conic bundles defined over $\Z$
that are not stably rational over $\C$.
\end{abstract}

\maketitle

\section{Introduction}

Motivated by the rationality problem in algebraic geometry, we compute
new obstructions to the universal triviality of the Chow group of
$0$-cycles of smooth projective varieties in characteristic $p>0$,
related to ideas coming from crystalline cohomology (see \cite{CL98}
for a survey): the $p$-torsion in the unramified Brauer group.  In
\cite{ABBB18}, we proved that $p$-torsion Brauer classes do obstruct
the universal triviality of the Chow group of $0$-cycles; here we
focus on computing these obstructions in the case of conic bundles. In
particular, we provide a formula to compute the two torsion in the
unramified Brauer group of conic bundle threefolds in characteristic~2. We provide some applications showing that one can obtain results
with this type of obstruction that one cannot by other means: there
exist conic bundles over $\P^2$, defined over $\Z$, that are smooth
over $\Q$ and whose reduction modulo $p$ has (1) nontrivial two
torsion in the unramified Brauer group and a universally
$\CH_0$-trivial resolution for $p=2$, and (2) irreducible
discriminant, hence trivial Brauer group, for all $p > 2$.  The
roadmap for this paper is as follows.

In Section \ref{sConicBundles}, we assemble some background on conic
bundles and quadratic forms in characteristic $2$ to fix notation and
the basic notions.

Section \ref{sBrauerMerkurjev} contains a few preliminary results
about Brauer groups, in particular their $p$-parts in characteristic
$p$, and then goes on to discuss residue maps, which, in our setting,
are only defined on a certain subgroup of the Brauer group, the
so-called tamely ramified, or tame, subgroup. We also interpret these
residues geometrically for Brauer classes induced by conic bundles in
characteristic $2$, and show that their vanishing characterizes
unramified elements.

Besides the fact that residues are only partially defined, we
encounter another new phenomenon in the bad torsion setting, which we
discuss in Section~\ref{sDiscriminantsChar2}, namely, that Bloch--Ogus
type complexes fail to explain which residue profiles are actually
realized by Brauer classes on the base. We also investigate some local
analytic normal forms of the discriminants of conic bundles in
characteristic $2$ that can arise or are excluded for various reasons.

In Section \ref{sFormulaBrauerChar2} we prove
Theorem~\ref{tBrauerNontrivial}, which computes the two torsion in the
unramified Brauer group of some conic bundles over surfaces in
characteristic two. In the hypotheses, we have to assume the existence
of certain auxiliary conic bundles over the base with predefined
residue subprofiles of the discriminant profile of the initial conic
bundle. This is because of the absence of Bloch--Ogus complex methods,
as discussed in Section~\ref{sDiscriminantsChar2}.

In Section \ref{sExamplesNontrivialBrauerChar2} we construct examples
of conic bundle threefolds defined over $\Z$ that are not stably
rational over $\C$, of the type described in the first paragraph of
this Introduction.

{\bf Acknowledgments.} We would like to thank Jean-Louis Colliot-Th\'{e}l\`{e}ne and Burt Totaro for their interest in this work, their helpful suggestions and pointers to the literature. We would also like to thank the Simons Foundation for hosting the Conference on Birational Geometry August 2017 during which part of this work was presented and several participants gave us useful feedback.

\section{Background on conic bundles}\label{sConicBundles}

Let $K$ be a field. The most interesting case for us in the sequel
will be when $K$ has characteristic $2$. Typically, $K$ will not be
algebraically closed, for example, the function field of some
positive-dimensional algebraic variety over an algebraically closed
ground field $k$ of characteristic $2$, which is the base space of
certain conic bundles or, more generally, quadric fibrations.

\subsection{Quadratic forms.}

As a matter of reference, let us recall here some basic notions
concerning the classification of quadratic forms in characteristic
2. We refer to \cite[Ch.~I--II]{EKM08}.

\begin{definition} \label{quadr_form_general}
Let $V$ be a finite dimensional vector space over $K$. A \emph{quadratic
form} over $V$ is a map $q:V\longrightarrow K$ such that:
\begin{enumerate}
\item $q(\lambda v)=\lambda^2 q(v)$ for each $\lambda\in K$ and $v\in V$;
\item the map $b_q : V\times V \longrightarrow K$ defined by
\[
b_q (v,w)=q(v+w)-q(v)-q(w)
\]
is $K$-bilinear.
\end{enumerate}
\end{definition}

When the characteristic of $K$ is not $2$, a quadratic form
$q$ can be completely recovered by its associated bilinear form $b_q$
and, thus, by its associated symmetric matrix. This correspondence 
fails to hold when $\mathrm{char}\,K=2$, due to the existence of non-zero
quadratic forms with identically zero associated bilinear form; these
forms are called \emph{totally singular} and play a significant role
in the decomposition of quadratic forms over such fields.

\begin{definition} \label{quadr_form_radical}
Let $b$ be a bilinear form over $V$; its \emph{radical} is the set
\[
r(b):=\{v\in V\mid b(v,w)=0\text{ for any }w\in V\}
\]
Let $q$ be a quadratic form; the \emph{quadratic radical} is
\[
r(q):=\{v\in V\mid q(v)=0\}\cap r(b_q)
\]
In general, we have strict inclusion $r(q)\subset r(b_q)$ if
$\mathrm{char}\,K=2$.  A form such that $r(q)=0$ is called
\emph{regular}.
\end{definition}

We introduce the following notation: let $q$ be a quadratic form
over $V$ and let $U,W\subseteq V$ be vector subspaces such that
$V=W\oplus U$. If $U$ and $W$ are orthogonal with respect to the
associated bilinear form $b_q$ (we write $U\subset W^{\bot}$ to
mean this), then $q$ decomposes as sum of its restrictions $q|_W$
and $q|_U$ and we write $q=q|_W\bot q|_V$.

We will also say that two quadratic forms $q_1,q_2$ defined
respectively over $V_1$ and $V_2$, are \emph{isometric} if there
exists an isometry $f:V_1\longrightarrow V_2$ of the associated
bilinear forms and satisfying $q_1(v)=q_2(f(v))$. In this case, we
write $q_1\simeq q_2$.  We say that $q_1, q_2$ are \emph{similar} if
$q_1 \simeq c q_2$ for some $c \in K\mult$.

\begin{definition} \label{quadr_form_diag_anisotr}
Let $a,b\in K$. We denote by $\langle a\rangle$
the \emph{diagonal quadratic form} on $K$ (as $K$-vector space over
itself) defined by $v\mapsto a\, v^2$. Also, we denote by $[a,b]$
the quadratic form on $K^2$ defined by $(x,y)\mapsto ax^2+xy+by^2$.
\end{definition}

We say that a quadratic form $q$ is \emph{diagonalizable} if there
exists a direct sum decomposition $V=V_1\oplus\ldots\oplus V_n$
such that each $V_i$ has dimension $1$, we have $V_i\subseteq V_j^{\bot}$
for every $i\neq j$ and $q|_{V_i}\simeq\langle a_i\rangle$ so
that
\[
q\simeq\langle a_1,\ldots,a_n\rangle:=\langle a_1\rangle\bot\ldots\bot\langle a_n\rangle
\]
We will also write
\[
n\cdot q:=\underset{n\text{ times}}{\underbrace{q\bot\ldots\bot q}}
\]

If $\mathrm{char}\,K=2$, then $q$ is diagonalizable if and only if
$q$ is totally singular. This is in contrast with the well known
case of $\mathrm{char}\,K\neq 2$, since over such fields every quadratic
form is diagonalizable. 

A quadratic form $q$ is called \emph{anisotropic} if $q(v)\neq 0$ for
every $0\neq v\in V$. Geometrically, this means that the associated
quadric $Q:=\{q=0\}\subset \P (V)$ does not have $K$-rational points.
We remark that the associated conic only depends on the similarity
class of the quadratic form.

A form $q$ is called \emph{non-degenerate} if it is regular and $\dim
r(b_q)\leq 1$. Geometrically speaking, non-degeneracy means that the
quadric $Q$ is smooth over $K$, while regularity means that the
quadric $Q$ is regular as a scheme, equivalently, is not a cone in
$\P(V)$ over a lower dimensional quadric.  In characteristic 2, there
can exist regular quadratic forms that fail to be non-degenerate.

For example, consider the subvariety $X$ of $\P^2_{(u:v:w)}\times
\P^2_{(x:y:z)}$ defined by $$ux^2 + vy^2 + wz^2 =0$$ over an
algebraically closed field $k$ of characteristic $2$. This is a conic
fibration over $\P^2_{(u:v:w)}$ such that the generic fiber $X_K$ over
$K=k(\P^2_{(u:v:w)})$ is defined by a quadratic form $q$ that is
anisotropic, but totally singular. The form $q$ is regular, but fails
to be non-degenerate. Geometrically, this means that the conic
fibration has no rational section (anisotropic), has a geometric
generic fibre that is a double line (totally singular), $X_K$ is not a
cone (regular), but $X_K$ is of course not smooth over~$K$. On the
other hand, the total space $X$ of this conic fibration is smooth
over~$k$.

One has the following structure theorem.

\begin{theorem}\label{structure_theorem_quadratic_forms_char_2}
Let $K$ be a field of characteristic $2$ and let $q$ be a quadratic
form on a finite-dimensional vector $V$ over $K$. Then there exist a
$m$-dimensional vector subspace $W\subseteq r(b_q)$ and
$2$-dimensional vector subspaces $V_1,\ldots,V_s\subseteq V$ such that
the following orthogonal decomposition is realized:
\[
q=q|_{r(q)}\bot q|_{W}\bot q|_{V_1}\bot\ldots\bot q|_{V_s}
\]
with $q|_{V_i}\simeq[a_i,b_i]$ for some $a_i,b_i\in K$
a non-degenerate form. Moreover, $q|_{W}$ is anisotropic, diagonalisable
and unique up to isometry. In particular,
\[
q\simeq r\cdot\langle 0\rangle\bot\langle c_1,\ldots,c_m\rangle\bot[a_1,b_1]\bot\ldots\bot[a_s,b_s]
\]
\end{theorem}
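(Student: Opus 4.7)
The plan is to peel off the orthogonal summands in sequence. First, I would choose any vector space complement $W$ of $r(q)$ inside $r(b_q)$ and any complement $U$ of $r(b_q)$ inside $V$; this yields a direct sum $V = r(q) \oplus W \oplus U$ whose summands are pairwise orthogonal under $b_q$ (since $r(q), W \subseteq r(b_q)$), giving the coarse decomposition $q = q|_{r(q)} \bot q|_W \bot q|_U$ with $q|_{r(q)} \equiv 0$.

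Next I would analyze $q|_W$. It is totally singular because $W \subseteq r(b_q)$, hence diagonalizable by the fact noted just before the theorem; it is anisotropic because any $v \in W$ with $q(v) = 0$ lies in $r(b_q) \cap \{q = 0\} = r(q)$, and $W \cap r(q) = 0$ by construction. This produces the anisotropic diagonal summand $\langle c_1, \ldots, c_m \rangle$.

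For the piece $(U, q|_U)$, note that $b_q|_U$ is non-degenerate (any $b_q$-null vector in $U$ would lie in $r(b_q) \cap U = 0$) and alternating (in characteristic $2$, $b_q(v,v) = 2q(v) = 0$). Applying the standard symplectic basis theorem yields a basis $e_1, f_1, \ldots, e_s, f_s$ of $U$ with $b_q(e_i, f_j) = \delta_{ij}$ and vanishing otherwise; setting $V_i = \mathrm{span}(e_i, f_i)$, a direct expansion of $q(xe_i + yf_i)$ using the bilinear-form identity gives $q|_{V_i} \simeq [q(e_i), q(f_i)]$, non-degenerate since $b_q|_{V_i}$ is.

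The step I anticipate as the main technical obstacle is the uniqueness of $q|_W$ up to isometry, since the complement $W$ is not canonical. The clean way I would handle it is to observe that $q$ vanishes on $r(q)$ and that $q(v+w) = q(v) + q(w)$ for $v, w \in r(b_q)$ (because $b_q(v,w) = 0$ there), so $q|_{r(b_q)}$ descends to an intrinsic map $\bar q : r(b_q)/r(q) \to K$ satisfying $\bar q(\lambda v) = \lambda^2 \bar q(v)$. Any complement $W$ of $r(q)$ in $r(b_q)$ maps isomorphically onto this quotient and carries a quadratic form isometric to $\bar q$, so all such $W$ yield isometric restrictions. Assembling the three pieces produces the claimed orthogonal decomposition.
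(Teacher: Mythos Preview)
The paper does not actually prove this theorem: it is stated as a background result on quadratic forms in characteristic~$2$, with the reader referred to \cite[Ch.~I--II]{EKM08} for the underlying theory. So there is no paper proof to compare against.

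That said, your argument is correct and is essentially the standard one. A couple of small points worth making precise. First, when you assert that $b_q|_U$ is non-degenerate, you only check that a $b_q$-null vector in $U$ is orthogonal to~$U$; you should remark (as you implicitly use) that since $V = r(b_q) \oplus U$ and every element of $r(b_q)$ is already orthogonal to everything, orthogonality to $U$ forces orthogonality to all of $V$, hence membership in $r(b_q)\cap U = 0$. Second, your uniqueness argument is clean and correct: the additivity $q(v+w)=q(v)+q(w)$ on $r(b_q)$ and the vanishing on $r(q)$ show that $q|_W$ is, up to the linear isomorphism $W \isomto r(b_q)/r(q)$, the intrinsic Frobenius-semilinear map $\bar q$ on the quotient, which does not depend on the choice of complement. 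This is exactly how one proves it in \cite{EKM08}.
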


We now classify quadratic forms in three variables.







\begin{corollary}\label{cConics}
Let $K$ be a field of characteristic $2$, let $q$ be a nonzero
quadratic form in three variables over $K$, and let $Q \subset \P^2$
be the associated conic.  In the following table, we give the
classification of normal forms of $q$, up to similarity, and the
corresponding geometry of $Q$.
\[
\begin{array}{|c|c|c|c|}\hline
\dim r(q) & \dim r(b_q) &  \text{normal form of}~q &  \text{geometry
of}~Q \\\hline
0 & 1 & ax^2 + by^2 + xz + z^2 & \text{smooth conic} \\
0 & 3 & ax^2+by^2+z^2 & \text{regular conic, geom.\ double line}\\ 
1 & 1 & ax^2 + xz + z^2 & \text{cross of lines over $K_a$}\\
1 & 1 & xz & \text{cross of lines}\\
1 & 3 & ax^2+z^2 & \text{singular conic, geom.\ double line}\\
2 & 3 & z^2 & \text{double line}\\\hline
\end{array}
\]
Here, $(x:y:z)$ are homogeneous coordinates on $\P^2$; by cross of
lines, we mean a union of two disjoint lines in $\P^2$; and by $K_a$, we
mean the Artin--Schreier extension of~$K$ defined by $x^2-x-a$.
\end{corollary}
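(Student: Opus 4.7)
The plan is to invoke Theorem~\ref{structure_theorem_quadratic_forms_char_2}, which writes $q$ as an orthogonal decomposition
\[
q \simeq r\cdot\langle 0 \rangle \bot \langle c_1, \ldots, c_m \rangle \bot [a_1, b_1] \bot \cdots \bot [a_s, b_s],
\]
with $r = \dim r(q)$, $r + m = \dim r(b_q)$, and $r + m + 2s = \dim V = 3$. Because $q \neq 0$, the admissible tuples $(r, m, s)$ are exactly $(0,1,1)$, $(0,3,0)$, $(1,0,1)$, $(1,2,0)$, and $(2,1,0)$; these correspond bijectively to the five pairs $(\dim r(q), \dim r(b_q))$ appearing in the table, with the $(1,1)$ row further subdivided by an Artin--Schreier invariant described below.

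Next I would produce the normal form in each case. The three purely diagonal cases $(0,3,0)$, $(1,2,0)$, $(2,1,0)$ are handled by rescaling one of the $c_i$ to $1$, yielding $ax^2 + by^2 + z^2$, $ax^2 + z^2$, and $z^2$ respectively; the anisotropy of the totally singular part guaranteed by the structure theorem provides the implicit non-triviality conditions (for instance $a \notin K^2$ in the $(1,3)$ row, lest $ax^2 + z^2 = (\sqrt{a}\,x + z)^2$ collapse to the $(2,3)$ row). For the two mixed cases $(0,1,1)$ and $(1,0,1)$ the key step is to bring the binary piece $[a',b']$ into the form $[a,1]$: when $b' \neq 0$, rescaling by $1/b'$ and applying the linear substitution $x \mapsto b'x$ gives an isometry $(1/b')[a',b'] \simeq [a'b',1]$; when $b' = 0$, the substitution $z \mapsto z + a'x$ reduces $a'x^2 + xz$ to $xz$. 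Assembling with the orthogonal $\langle c \rangle$ or $\langle 0 \rangle$ summand produces the normal forms $ax^2 + by^2 + xz + z^2$ and $ax^2 + xz + z^2$ advertised in the table.

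The final ingredient is the subdivision of the $(1,1)$ case and the identification of the geometry. The polynomial $u^2 + u + a$ is reducible over $K$ precisely when $a \in \wp(K)$, with $\wp(t) = t^2 + t$; in that case, writing $a = t^2 + t$ and applying $z \mapsto z + tx$ followed by $x \mapsto x + z$ reduces $ax^2 + xz + z^2$ to $xz$, so $Q$ is the union of the two $K$-rational lines $\{x = 0\}$ and $\{z = 0\}$, whereas if $a \notin \wp(K)$ the conic remains irreducible over $K$ and splits only over $K_a = K[u]/(u^2 + u + a)$ as $(ux + z)((u+1)x + z)$. The geometry in the remaining rows is then immediate: the $(0,1)$ form is non-degenerate, so $Q$ is smooth; the totally singular anisotropic $(0,3)$ form is regular but over $\bar K$ becomes the double line $(\sqrt{a}\,x + \sqrt{b}\,y + z)^2 = 0$; the $(1,3)$ form is singular as a scheme because $r(q) \neq 0$, and is geometrically the double line $(\sqrt{a}\,x + z)^2 = 0$; and $(2,3)$ is literally the double line $\{z = 0\}$. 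The main technical step is the reduction of the non-degenerate binary piece to $[a'b',1]$; everything else is case enumeration and direct substitution.
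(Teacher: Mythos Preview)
Your proof is correct and follows essentially the same approach as the paper's: both start from the structure theorem, enumerate the possible shapes $(r,m,s)$, use the similarity $[a',b'] \sim [a'b',1]$ (via the same scaling trick) to normalize the nondegenerate binary summand, and then split the $(1,1)$ case according to whether $a \in \wp(K)$ using the same substitutions $z \mapsto z + tx$, $x \mapsto x+z$. The only cosmetic difference is that the paper first records an intermediate table of isometry normal forms before passing to similarity, whereas you go directly; your treatment is slightly more explicit about the $b'=0$ subcase, but the content is identical.
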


\begin{proof}
According to the classification in
Theorem~\ref{structure_theorem_quadratic_forms_char_2}, we have the
following normal forms for $q$ up to isometry over $K$.
\[
\begin{array}{|c|c|rl|}\hline
\dim r(q) & \dim r(b_q) &  \multicolumn{2}{c|}{\text{normal form of
$q$ up to isometry}} \\\hline
0 & 1 & ax^2 + by^2 + xz + cz^2 & \; a,c \in K,~b\in K\mult  \\
0 & 3 & ax^2+by^2+cz^2 & \; a,b,c \in K\mult \\ 
1 & 1 & ax^2 + xz + cz^2 & \; a,c \in K \\
1 & 3 & ax^2+cz^2 & \; a,c \in K\mult \\
2 & 3 & cz^2 & \; c \in K\mult \\\hline
\end{array}
\]
Here, in the cases $\dim r(q)\leq 1$ and $\dim r(b_q)=3$, we are
assuming that the associated diagonal quadratic forms $\langle a,b,c
\rangle$ in 3 variables or $\langle a,c \rangle$ in 2 variables,
respectively, are anisotropic.  Otherwise, these cases are not
necessarily distinct.

We remark that up to the change of variables $z \mapsto c\inv z$ and
multiplication by $c$, the quadratic forms $[a,c]$ and $[ac,1]$ are
similar.  Hence up to similarity, we can assume that $c=1$ in the
above table of normal forms up to isometry.

The fact that when $q$ is totally singular, $Q$ is geometrically a
double line follows since any diagonal quadratic form over an
algebraically closed form of characteristic 2 is the square of a
linear form.

Thus, the only case requiring attention is the case $\dim r(q)=\dim
r(b_q)=1$, where we claim that if $a = \alpha^2-\alpha$ for some
$\alpha \in K$, then $q$ is similar to $xz$ and thus $Q$ is a cross of
lines.  Indeed, after assuming that $c=1$, as above, we change
variables $z \mapsto z - \alpha x$ and $x \mapsto x-z$.  In
particular, when $a \in K/\wp(K)$ is nonzero, where $\wp : K \to K$ is
given by $\wp(x) = x^2-x$, then $Q$ becomes a cross of lines over the
Artin--Schreier extension $L/K$ defined by $x^2-x-a$.  This
distinguished the two cross of lines cases in the table in the
statement of the corollary.
\end{proof}


\subsection{Conic bundles.}

Let $k$ be an algebraically closed field. We adopt the following
definition of conic bundle.

\begin{definition} \label{dConicBundle} Let $X$ and $B$ be projective
varieties over $k$ and let $B$ be smooth. A \emph{conic bundle} is a
morphism $\pi:X\longrightarrow B$ such that $\pi$ is flat and proper
with every geometric fibre isomorphic to a plane conic and with smooth
geometric generic fibre. In practice, all conic bundles will be given
to us in the following form: there is a rank 3 vector bundle $\EE$
over $B$ and a quadratic form $q:\EE\longrightarrow\LL$ (with values
in some line bundle $\LL$ over $B$) which is not identically zero on
any fibre. Suppose that $q$ is non-degenerate on the generic fibre of
$\EE$. Then putting $X=\{q=0\}\subseteq\P(\EE)\longrightarrow B$,
where the arrow is the canonical projection map to $B$, defines a
conic bundle.
\end{definition}

The hypothesis on the geometric generic fibre is not redundant in our
context. Suppose that $\mathrm{char}\,k=2$, and let
$\pi:X\longrightarrow B$ be a flat, proper morphism such that every
geometric fibre is isomorphic to a plane conic. Let $\eta$ be the
generic point of $B$ and $K=k(B)$; note that the geometric generic
fibre $\ol{X}_{\eta}$ is a conic in $\P_{\ol{K}}^{2}$ and it is
defined by the vanishing of some quadratic form $q_{\eta}$.  By
Corollary~\ref{cConics}, we conclude that then $\ol{X}_{\eta}$ is cut
out by one of the following equations:
\begin{equation} \label{wild_cb}
ax^2+by^2+cz^2=0
\end{equation}
or
\begin{equation} \label{real_cb}
ax^2+by^2+yz+cz^2=0
\end{equation}
where $(x:y:z)$ are homogeneous coordinates for $\P_{\bar{K}}^{2}$.
The additional assumption on smoothness of the geometric generic fibre
allows us to rule out the case of \eqref{wild_cb}, which would give
rise to \emph{wild conic bundles}. 

We have to define discriminants of conic bundles together with their
scheme-structure.  First we discuss the discriminant of the generic conic.

\begin{remark}\label{rDiscriminants}
Let $\P^2$ have homogeneous coordinates $(x:y:z)$ and $\P =
\P(H^0(\P^2,\OO(2)))$ the 5-dimensional projective space of all conics
in $\P^2$.  We have the universal conic over $X_\univ \longrightarrow
\P$ defined as the projection of the incidence $X_\univ \subset \P
\times \P^2$, which can be written as a hypersurface of bidegree
$(1,2)$ defined by the generic conic
\[
a_{xx}x^2 + a_{yy} y^2 + a_{zz} z^2 + a_{xy}xy + a_{xz}xz + a_{yz} yz,
\]
where we can consider $(a_{xx}:a_{yy}:a_{zz}:a_{xy}:a_{xz}:a_{yz})$ as
a system of homogeneous coordinates on $\P$.  In these coordinates,
the equation of the discriminant $\Delta_\univ \subset \P$
parametrizing singular conics is
\begin{gather}\label{fDiscNot2}
4 a_{xx}a_{yy}a_{zz} + a_{xy}a_{yz}a_{xz} - a_{xz}^2 a_{yy} - a_{yz}^2 a_{xx} - a_{xy}^2 a_{zz}
\end{gather}
which simplifies to 
\begin{gather}\label{fDisc2}
a_{xy}a_{yz}a_{xz} + a_{xz}^2 a_{yy} + a_{yz}^2 a_{xx} + a_{xy}^2 a_{zz}
\end{gather}
in characteristic $2$. In any characteristic, $\Delta_\univ \subset
\P$ is a geometrically integral
hypersurface parameterizing the locus of singular conics in $\P^2$.
\end{remark}

\begin{definition}\label{dDiscriminants}
Let $\pi \colon X \to B$ be a conic bundle as in Definition \ref{dConicBundle}.
\begin{enumerate}
\item
The (geometric) discriminant $\Delta$ of the conic bundle is the union of those irreducible codimension $1$ subvarieties $\Delta_i$ in $B$ that have the following property: the geometric generic fibre of the restriction $\pi_{\pi^{-1}(\Delta_i)}\colon X_{\pi^{-1}(\Delta_i)} \to \Delta_i$ is not smooth.

\item We endow $\Delta$ with a scheme structure by assigning a
multiplicity to each $\Delta_i$ as follows. For each $i$, there is a
Zariski open dense subset $U_i\subset B$ such that $\Delta_i \cap U_i
\neq \emptyset$ and a morphism $f_i \colon U_i \to \P$ such that
$\pi|_{\pi^{-1}(U_i)} \colon X_{\pi^{-1}(U_i)} \to U_i$ is
isomorphic to the pull-back via $f_i$ of the universal conic bundle.
Then $\Delta_i \cap U_i$ is the reduced subscheme of a component of
$f_i^{-1}(\Delta_{\mathrm{univ}})$, interpreted as a scheme-theoretic
pullback, and we assign to $\Delta_i$ the corresponding multiplicity.
\end{enumerate}
\end{definition}

\section{Brauer groups and partially defined residues}\label{sBrauerMerkurjev}

For a Noetherian scheme $X$, we denote by $\Br(X)$ Grothendieck's
cohomological Brauer group, the torsion subgroup of
$\Het^2(X,\Gm)$. If $X = \mathrm{Spec}(A)$ for a commutative ring $A$,
we also write $\Br(A)$ for the Brauer group of $\mathrm{Spec}\, A$.

If $X$ is a regular scheme, every class in $\Het^2(X,\Gm)$ is torsion
\cite[II,~Prop.~1.4]{Gro68}. If $X$ is quasi-projective (over any
ring), a result of Gabber \cite{deJ03} says that this group equals the
Azumaya algebra Brauer group, defined as the group of Azumaya algebras
over $X$ up to Morita equivalence.

Below, unless mentioned otherwise, $X$ will be a smooth projective
variety over a field $k$.

In various applications, one is frequently given some highly singular
model of $X$ for which explicitly resolving is not feasible.  It is
thus desirable to be able to determine $\Br(X)$ purely in terms of
data associated with the function field $k(X)$. This is the idea
behind unramified invariants, e.g.\ \cite{Bogo87}, \cite{CTO},
\cite{CT95}. We have an inclusion
\begin{displaymath}\label{fInclusionBrauer}
\Br(X) \subset \Br(k(X))
\end{displaymath}
by \cite[II,~Cor.~1.10]{Gro68}, given by pulling back to the generic
point of $X$. One wants to single out the classes inside $\Br(k(X))$
that belong to $\Br(X)$ in valuation-theoretic terms. Since the basic
reference \cite{CT95} for this often only deals with the case of
torsion in the Brauer group coprime to the characteristic of $k$, we
gather together some results in the generality we will need.

Basic references for valuation theory are \cite{Z-S76} and
\cite{Vac06}. All valuations considered are Krull valuations.

\begin{definition}\label{dValBrauer}
Let $X$ be a smooth proper variety over a field $k$ and let
$\valus{S}$ be a subset of the set of all Krull valuations on the
function field $k(X)$. All valuations we consider will be
\emph{geometric}, i.e., they are assumed to be trivial on $k$. For
$v\in \valus{S}$, we denote by $A_v\subset k(X)$ the valuation ring of
$v$.  We denote by $\Br_{\valus{S}}(k(X)) \subset \Br(k(X))$ the set
of all Brauer classes $\alpha\in \Br(k(X))$ that are in the image of
the natural map $\Br(A_v) \to \Br(k(X))$ for all $v \in
\valus{S}$. Specifically, we consider the following sets $\valus{S}$.
\begin{enumerate}
\item The set $\valus{DISC}$ of all discrete rank $1$ valuations on
$k(X)$.

\item The set $\valus{DIV}$ of all divisorial valuations of
$k(X)$ corresponding to some prime divisor $D$ on a model $X'$ of
$k(X)$, where $X'$ is assumed to be generically smooth along $D$.

\item The set $\valus{DIV/}X$ of all divisorial valuations of $k(X)$
corresponding to a prime divisor on $X$.
\end{enumerate}
\end{definition}

Note the containments $\valus{DISC} \supset \valus{DIV} \supset
\valus{DIV/}X$, which are all strict in general.  Indeed, recall that
divisorial valuations are those discrete rank $1$ valuations $v$ with
the property that the transcendence degree of their residue field is
$\dim X-1$ \cite[Ch.~VI,~\S14,~p.~88]{Z-S76},
\cite[\S1.4,~Ex.~5]{Vac06}; and that there exist discrete rank $1$
valuations that are not divisorial, e.g., the analytic
arcs~\cite[Ex.~8(ii)]{Vac06}.  The main result that we need is the
following, which for torsion prime to the characteristic is proved in
\cite[Prop.~2.1.8,~\S2.2.2]{CT95}, and in general, in
\cite[Thm.~2.5]{ABBB18} using purity results due to
Gabber~\cite{Ga93}, \cite{Ga04}, \cite{Fuji02}, \cite{ILO14}, cf.\
\cite{Ces17}.

\begin{theorem}\label{tComparisonBrauer}
Let $X$ be a smooth projective variety over a field $k$.  Then all of
the natural inclusions
\[
\Br(X) \subset \Br_{\valus{DISC}}(k(X)) \subset \Br_{\valus{DIV}}(k(X)) \subset \Br_{\valus{DIV/}X}(k(X))
\]
are equalities.  In general, if $X$ is smooth and not necessarily
proper, then we still have the inclusion $\Br(X) \subset
\Br_{\valus{DIV/}X}(k(X))$ and this is an equality.
\end{theorem}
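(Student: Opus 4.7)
My plan is to close a cycle: prove the chain $\Br(X) \subset \Br_{\valus{DISC}}(k(X)) \subset \Br_{\valus{DIV}}(k(X)) \subset \Br_{\valus{DIV/}X}(k(X))$ by elementary valuation-theoretic arguments, and then obtain the reverse containment $\Br_{\valus{DIV/}X}(k(X)) \subset \Br(X)$ from purity, which collapses the whole chain to equalities. For the first inclusion, assuming $X$ is proper, I would invoke the valuative criterion: any geometric Krull valuation $v$ on $k(X)$ has valuation ring $A_v \supset k$, and properness extends $\Spec k(X) \to X$ uniquely to $\Spec A_v \to X$; pulling back $\alpha \in \Br(X)$ along this map produces the required lift in $\Br(A_v)$. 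The remaining two inclusions in the chain are purely formal, since demanding unramifiedness on a smaller set of valuations defines a larger subgroup of $\Br(k(X))$. In the smooth non-proper case the valuative-criterion step fails, but $\Br(X) \subset \Br_{\valus{DIV/}X}(k(X))$ still holds by functoriality: for any prime divisor $D \subset X$, smoothness makes $\mathcal{O}_{X,D}$ a DVR with fraction field $k(X)$, and the restriction $\Br(X) \to \Br(\mathcal{O}_{X,D})$ realizes the lift.

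The essential content is the reverse inclusion $\Br_{\valus{DIV/}X}(k(X)) \subset \Br(X)$, which asserts that a Brauer class on $k(X)$ extending to $\Br(\mathcal{O}_{X,D})$ at every codimension-one point $D$ of $X$ actually comes from $\Br(X)$. This is the statement of absolute cohomological purity for the Brauer group on the regular scheme $X$: the Gersten-type resolution
\[
0 \to \Br(X) \to \Br(k(X)) \to \bigoplus_{D \in X^{(1)}} H^1(k(D), \Q/\Z)
\]
is exact, identifying $\Br(X)$ with the Brauer classes on $k(X)$ whose residues along every prime divisor vanish; any class in $\Br_{\valus{DIV/}X}(k(X))$ has this property by construction. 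For torsion coprime to the characteristic of $k$, this is exactly \cite[Prop.~2.1.8,~\S2.2.2]{CT95}, and in that range the proof I have sketched is complete.

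The main obstacle---and the reason the general statement is attributed to \cite[Thm.~2.5]{ABBB18} rather than to \cite{CT95}---is the $p$-primary torsion in characteristic $p$, which is precisely the range relevant to our application with $p=2$. The classical Bloch--Ogus package with $\mu_n$-coefficients breaks down when $p \mid n$, so the Gersten resolution for $\Br$ cannot be extracted in the standard way, and a genuinely harder purity input is needed. I would therefore invoke Gabber's absolute purity in its full generality, as established in \cite{Ga93,Ga04,Fuji02,ILO14,Ces17}, to obtain the required exactness of the residue sequence including $p$-torsion; this is the content of \cite[Thm.~2.5]{ABBB18}. With that purity statement in hand, the Gersten-type argument closes the cycle uniformly in the proper and non-proper cases, and the theorem follows.
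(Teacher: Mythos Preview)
The paper does not give its own proof of this theorem: it simply states the result and attributes the prime-to-$p$ case to \cite[Prop.~2.1.8,~\S2.2.2]{CT95} and the general case to \cite[Thm.~2.5]{ABBB18}, invoking Gabber's purity results. Your outline is entirely consistent with this attribution, and the architecture you describe (valuative criterion for the forward inclusions, purity to close the cycle) is the standard one underlying those references.

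One small caveat on presentation: you phrase the purity input as exactness of a Gersten-type sequence with residues landing in $\bigoplus_D H^1(k(D),\Q/\Z)$, and then note that this breaks down for $p$-torsion. But the statement actually needed is more primitive and does not require residues at all: one needs that on a regular scheme $X$, the natural map $\Br(X) \to \bigcap_{D\in X^{(1)}} \Br(\OO_{X,D})$ (intersection taken inside $\Br(k(X))$) is an isomorphism. This is exactly what ``being in the image of $\Br(A_v)$ for all $v\in\valus{DIV/}X$'' says, and it is a purity statement that makes sense uniformly, without ever mentioning residues. Framing it this way avoids the apparent circularity of invoking a residue sequence whose $p$-part, as the paper itself emphasises at length in \S\ref{sBrauerMerkurjev}, is only partially defined. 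Your conclusion and references are correct; just rephrase the key step as purity for $\Br$ in codimension one rather than as a Gersten complex.
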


\begin{remark}\label{rUnramifiedBrauer}
In the setting of Theorem~\ref{tComparisonBrauer}, we will agree to
denote the group $\Br_{\valus{DIV}}(k(X))$ by $\Br_\ur(k(X))$ and
call this the unramified Brauer group of the function field $k(X)$. We
will also use this notation for singular $X$. According to
\cite{Hi17}, a resolution of singularities should always exist, but we
do not need this result: in all our applications we will produce explicit
resolutions $\widetilde{X}$, and then we know
$\Br_\ur(k(X))=\Br(\widetilde{X})$.
\end{remark}

Next we want to characterize elements in $\Br_\ur(k(X))$ in terms of
partially defined residues in the sense of Merkurjev as in
\cite[Appendix A]{GMS03}; this is necessitated by the following
circumstance: if one wants to give a formula for the unramified Brauer
group of a conic bundle over some smooth projective rational base
(see, e.g., \cite{Pi16}, \cite{ABBP16}), for example a smooth
projective rational surface, the idea of \cite{CTO} is to produce the
nonzero Brauer classes on the total space of the given conic bundle as
pull-backs of Brauer classes represented by certain other conic
bundles on the base whose residue profiles are a proper subset of the
residue profile of the given conic bundle. Hence, one also has to
understand the geometric meaning of residues because in the course of
this approach it becomes necessary to decide when the residues of two
conic bundles along one and the same divisor are equal.



Let $K$ be a field of characteristic $p$. We denote the subgroup of
elements in $\Br(K)$ whose order equals a power of $p$ by $\Br(K)\{ p
\}$.  Let $v$ be a discrete valuation of $K$, $K_v$ the completion of
$K$ with respect to the absolute value induced by $v$. Denote the
residue field of $v$ by $k(v)$ and by $\ol{K}_v$ an algebraic closure
of $K_v$. One can extend $v$ uniquely from $K_v$ to $\ol{K}_v$, the
residue field for that extended valuation on $\ol{K}_v$ will be
denoted by $\overline{k(v)}$.

By \cite[p.~64--67]{Artin67}, unramified subfields of $\ol{K}_v$
correspond to separable subfields of $\overline{k(v)}$, and, in
particular, there is a maximal unramified extension, with residue
field $k(v)\sep$ (separable closure), called the
\emph{inertia field}, and denoted by $K_v^\ur$ or $T=T_v$ (for
\emph{Tr\"agheitsk\"orper}). One also has that the Galois group
$\Gal(K_v^\ur/K_v)$ is isomorphic to $\Gal(k(v))$.
Now, recall that by the Galois cohomology characterization, the Brauer
group $\Br(K_v)\{ p\}$ is isomorphic to $H^2 (K_v,
K_v\sep{}\mult)\{ p\}$ and there is a natural map
\begin{equation} \label{fGaloisNatMap}
H^2 (\Gal(K_v^\ur/K_v), K_v^\ur{}\mult) \{ p \} \to H^2 (K_v, K_v\sep{}\mult)\{ p\}
\end{equation}
which is injective \cite[App.~A,~Lemma~A.6,~p.~153]{GMS03}.

\begin{definition}\label{defTameSubgroup}
With the above setting, we call the image of (\ref{fGaloisNatMap}) the \emph{tame subgroup} or \emph{tamely ramified} subgroup of $\Br(K_v)\{ p\}$ associated to $v$, and denote it by $\Br_{\mathrm{tame}, v}(K_v)\{p\}$. We denote its preimage in $\Br(K)\{ p\}$ by $\Br_{\mathrm{tame}, v}(K)\{p\}$ and likewise call it the tame subgroup of $\Br(K)\{ p\}$ associated to $v$.
\end{definition}

Writing again $v$ for the unique extension of $v$ to $K_v^\ur$ we have a group homomorphism
\[
v\colon K_v^\ur{}\mult \to \Z
\]
\begin{definition} \label{dMerkurjevResidues}
Following \cite[App. A]{GMS03} one can define a map as the composition
\small
\[
r_v \colon \Br_{\mathrm{tame}, v}(K)\{p\} \to H^2 (\Gal(K_v^\ur/K_v), K_v^\ur{}\mult)\{ p\}  \to H^2 (k(v), \Z )\{p\}\simeq H^1 (k(v), \Q /\Z)\{p\}. 
\]
\normalsize which we call the \emph{residue map} with respect to the
valuation $v$. We will say that the residue of an element $\alpha \in
\Br(K)\{ p\}$ with respect to a valuation $v$ is \emph{defined},
equivalently, that $\alpha$ is \emph{tamely ramified} at $v$, if
$\alpha$ is contained in $\Br_{\mathrm{tame}, v}(K)\{p\}$.
\end{definition}

\begin{remark}\label{rArtinSchreier}
If $\alpha \in \Br(K)[p]$ for which the residue with respect to a
valuation $v$ is defined, as in Definition~\ref{dMerkurjevResidues},
then $r_v (\alpha ) \in H^1 (k(v), \Z /p)$. By Artin--Schreier theory
\cite[Prop.~4.3.10]{GS06}, one has $H^1 (k(v), \Z /p)\simeq k(v)/\wp
(k(v))$ where $\wp \colon k(v) \to k(v)$, $\wp (x) =x^p -x$, is the
Artin--Schreier map. This group classifies pairs, consisting of a
finite $\Z/p$-Galois extension of $k(v)$ together with a chosen
generator of the Galois group.  Indeed, $\Z/p$-Galois extensions of
$k(v)$ are Artin--Schreier extensions, i.e., generated by the roots of
a polynomial $x^p- x-a$ for some $a\in k(v)$.  The isomorphism class
of such an Artin--Schreier extension is unique up to the substitution
\[
a\mapsto \eta a + (c^p -c)
\]
where $\eta\in \F_p^{\times}$ and $c\in k(v)$, see for example
\cite[\S7.2]{Artin07}. In particular, for $p=2$, one may also identify
$H^1 (k(v), \Z /2)$ with $\mathrm{\acute{E}t}_2 (k(v))$, the set of
isomorphism classes \'{e}tale algebras of degree $2$ over $k(v)$, cf.\
\cite[p.~402,~Ex.~101.1]{EKM08}; more geometrically, if $D$ is a prime
divisor on a smooth algebraic variety over a field $k$ and $v_D$ the
corresponding valuation, the residue can be thought of as being given
by an \'{e}tale double cover of an open part of $D$.
\end{remark}

\begin{remark}\label{rRelativeBrauer}
Keep the notation of Definition \ref{dMerkurjevResidues}. The tame
subgroup
\[
\Br_{\mathrm{tame}, v}(K_v)\{p\}=H^2 (\Gal(K_v^\ur/K_v),
K_v^\ur{}\mult) \{ p \}
\] 
of $\Br(K_v)\{ p\}$ has a simpler description by
\cite[Thm.~4.4.7,~Def.~2.4.9]{GS06}: it is nothing but the subgroup of
elements of order a power of $p$ in the relative Brauer group
$\Br(K_v^\ur/K_v)$ of Brauer classes in $\Br(K_v)$ that are split by
the inertia field $K_v^\ur$,
in other words, are in the kernel of the natural map
\[
\Br(K_v) \to \Br(K_v^\ur).
\]
To explain the name, one can say that the tame subgroup of $\Br(K_v)\{
p\}$ consists of those classes that become trivial in $\Br(V)$, where
$V$ is the \emph{maximal tamely ramified extension} of $K_v$, the
ramification field (\emph{Verzweigungsk\"orper})
\cite[Ch.~4,~\S2]{Artin67}, because the classes of orders a power of
$p$ split by $ \Br(T)\{p\}$ coincide with those split by $\Br(V)\{p\}$
(since $V$ is obtained from $T$ by adjoining roots $\sqrt[m]{\pi}$ of
a uniformizing element $\pi$ of $K_v$ of orders $m$ not divisible by
$p$ and restriction followed by corestriction is multiplication by the
degree of a finite extension). Since in characteristic~$0$ every
extension of $K_v$ is tamely ramified, one can say that in general
residues are defined on the subgroup of those classes in
$\Br(K_v)\{p\}$ that become trivial on $\Br(V)\{p\}$.

The terminology tame subgroup was suggested to us by Burt Totaro, who
also kindly provided other references to the literature. It has the
advantage of avoiding the confusing terminology ``unramified
subgroup", also sometimes used, for elements that can have nontrivial
residues. The terminology here is consistent with
\cite[\S6.2,~Prop.~6.63]{TiWa15}, and one could also have called the
tamely ramified subgroup the \emph{inertially split part}, following
that source, as the two notions coincide in our context. Our
terminology is also consistent with the one in \cite[Thm.~3]{Ka82}.
\end{remark}

\begin{remark}\label{rTameSubgroupsForHigherCohomologies}
More generally, given a field $F$ of characteristic $p>0$, one can
define a version of Galois cohomology ``with mod $p$ coefficients",
following Kato \cite{Ka86} or Merkurjev \cite[App.~A]{GMS03},
\cite{Mer15}, in the following way: define
\begin{equation}\label{fHigherCohomology}
H^{n+1}(F,\Q_p/\Z_p(n)):=H^2 (F, \mathrm{K}_n(F\sep ))\{ p\}
\end{equation}
where $\mathrm{K}_{n}(F\sep)$ is the $n$-th Milnor K-group of the
separable closure of $F$, and the cohomology on the right hand side is
usual Galois cohomology with coefficients in this Galois module. The
coefficients $\Q_p/\Z_p(n)$ on the left hand side are just a symbol
here to point out the similarity with the case of characteristics
coprime to $p$, though one can also define them via the logarithmic
part of the de Rham--Witt complex, where this symbol has meaning as a
coefficients complex.

Given a discrete rank $1$ valuation $v$ of $F$ with residue field $E$, one can define a \emph{tame subgroup} (or \emph{tamely ramified subgroup}) 
\[
H^{n+1}_{\mathrm{tame}, v} (F, \Q_p /\Z_p (n)) \subset H^{n+1} (F, \Q_p /\Z_p (n))
\]
in this more general setting in such a way that one recovers the
definition given for the Brauer group in
Definition~\ref{dMerkurjevResidues} above: following
\cite[p.~153]{GMS03} let $F_v$ be the completion, $F_v^\ur$ its
maximal unramified extension, and put
\[
H^{n+1}_{\mathrm{tame}, v}(F_v, \Q_p /\Z_p (n)) := H^2 (\Gal(F_v^\ur/F_v), \mathrm{K}_n (F_v^\ur))\{ p\} \subset H^2 (F_v, \mathrm{K}_n (F_v^{\mathrm{sep}}))\{ p\}
\]
(this is actually a subgroup by \cite[Lemma A.6]{GMS03}). Then define
the subgroup $H^{n+1}_{\mathrm{tame}, v} (F, \Q_p/\Z_p (n))$ as the
preimage of $H^{n+1}_{\mathrm{tame}, v}(F_v, \Q_p/\Z_p (n))$ under the
natural map $H^{n+1} (F, \Q_p/\Z_p (n)) \to H^{n+1} (F_v, \Q_p/\Z_p
(n))$. The $\Gal(F_v^\ur/F_v)$-equivariant residue map in
Milnor K-theory
\[
\mathrm{K}_n (F_v^\ur) \to K_{n-1}(E\sep)
\]
then induces a residue map, defined only on $H^{n+1}_{\mathrm{tame}, v} (F, \Q_p/\Z_p (n))$, 
\[
r_v \colon H^{n+1}_{\mathrm{tame}, v} (F, \Q_p/\Z_p (n)) \to H^n (E, \Q_p/\Z_p (n-1)).
\]
Note that, naturally, $\Gal(F_v^\ur/F_v) \simeq \Gal(E)$. 

\medskip

We want to describe the relation to logarithmic differentials and restrict to the case of $p$-torsion for simplicity. Given a discrete rank $1$ valuation $v$ of $F$ with residue field $E$, we have the group 
\[
H^{n+1}_{\mathrm{tame}, v} (F, \Z /p (n))=H^{n+1}_{\mathrm{tame},v}(F,\Q_p/\Z_p(n))[p]
\]
and there is a residue map $r_v$ defined on $H^{n+1}_{\mathrm{tame},
v} (F, \Z /p (n))$ as the restriction of the above $r_v$. We now have
the following alternative description
\[
H^{n+1} (F, \Z /p (n)) = H^1 (F, \Omega^n_{\mathrm{log}}(F\sep))
\]
where the coefficients $\Omega^n_{\mathrm{log}}(F\sep)$, denoted $\nu (n)_{F\sep}$ in other sources, are defined as the kernel in the exact sequence of Galois modules 
\[
\xymatrix{
0 \ar[r] & \Omega^n_{\mathrm{log}}(F\sep) \ar[r] & \Omega^n_{F\sep} \ar[r]^{\gamma -1 \quad\quad} & \Omega^n_{F\sep} / B^n_{F\sep} \ar[r] & 0
}
\]
see \cite[after~Prop.~1.4.2]{CT99}; here $B^n_{F\sep}$ is the subspace
of boundaries, the image of the differential $d\colon
\Omega^{n-1}_{F\sep} \to \Omega^n_{F\sep}$, and $\gamma - 1$ is a
generalization of the Artin--Schreier map defined on generators as
\begin{align*}
\gamma - 1 \colon  \Omega^n_{F\sep} & \to \Omega^n_{F\sep} / B^n_{F\sep}\\
              x \frac{dy_1}{y_1} \wedge \dots \wedge \frac{dy_n}{y_n}  & \mapsto (x^p -x) \frac{dy_1}{y_1} \wedge \dots \wedge \frac{dy_n}{y_n}\quad \mathrm{mod}\; B^n_{F\sep}
\end{align*}
with $x\in F\sep$, $y_i \in F\sep{}\mult$. Now by a result of Kato,
Bloch--Kato, Gabber, cf.\ \cite[Thm.~3.0]{CT99}, one has
\[
 \Omega^n_{\mathrm{log}}(F\sep) \simeq \mathrm{K}_n (F\sep)/p
\]
or, since by Izhboldin's theorem \cite[Thm.~7.8,~p.~274]{Wei13} the
groups $\mathrm{K}_n (F\sep)$ have no $p$-torsion, an exact sequence
\[
\xymatrix{
1 \ar[r] & \mathrm{K}_n (F\sep) \ar[r]^{\times p} & \mathrm{K}_n (F\sep) \ar[r] & \Omega^n_{\mathrm{log}}(F\sep) \ar[r] & 1
}
\]
of $\Gal(F)$-modules. The latter shows the equivalence of our
two definitions by passing to the long exact sequence in Galois
cohomology, taking into account that
\[
H^1 (F, \mathrm{K}_n (F\sep)) =0
\]
by \cite[Lemma 6.6]{Izh91}. One then finds 
\begin{align*}
H^1 (F, \Z /p (0)) &\simeq H^1 (F, \Z /p) \simeq F/\wp (F)\\
H^2 (F, \Z /p (1)) & \simeq \Br(F)[p]
\end{align*}
which brings us back again to Definition \ref{dMerkurjevResidues}.
See also \cite[p.~152,~Ex.~A.3]{GMS03}.
\end{remark}

We can use the two preceding remarks to prove the following.

\begin{theorem}\label{tResConicBundleAlongDiv}
Let $k$ be an algebraically closed field of characteristic $2$. Let $X$ and $B$ be projective varieties over $k$, let $B$ be smooth of dimension $\ge 2$ and let $\pi \colon X \to B$ be a conic bundle. Let $K$ be the function field of $B$, let $\alpha \in \Br(K)[2]$ be the Brauer class determined by the conic bundle, and let $D$ be a prime divisor on $B$. Suppose that one is in either one of the following two cases:
\begin{enumerate}
\item
The geometric generic fibre of 
\[
\pi|_{\pi^{-1}(D)} \colon X|_{\pi^{-1}(D)} \to D
\]
is a smooth conic.
\item
The geometric generic fibre of 
\[
\pi|_{\pi^{-1}(D)} \colon X|_{\pi^{-1}(D)} \to D
\]
is isomorphic to two distinct lines in the projective plane and $D$ is
a reduced component of the discriminant $\Delta$ as in Definition
\ref{dDiscriminants}.  In this case, we say that the conic bundle is
\emph{tamely ramified} over $D$.
\end{enumerate}
Then in both cases, the residue of $\alpha$ with respect to the
divisorial valuation $v_D$ determined by $D$ is defined, and in case
\textit{a)} it is zero, whereas in case \textit{b)} it is the class of the double cover
of $D$ induced by the restriction of the conic bundle $\pi$ over $D$,
which is \'{e}tale over an open part of $D$ by the assumption on the
type of geometric generic fibre.
\end{theorem}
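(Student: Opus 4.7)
The plan is to reduce to a computation over the local ring at the generic point of $D$ and exploit the normal form of Corollary~\ref{cConics}. Set $v = v_D$, and let $A = \OO_{B, \xi_D}$ denote the local ring of $B$ at the generic point $\xi_D$ of $D$, a discrete valuation ring with uniformizer $\pi$, residue field $\kappa = k(D)$, and fraction field $K = k(B)$. After shrinking to a Zariski neighborhood of $\xi_D$ on which $\EE$ and $\LL$ of Definition~\ref{dConicBundle} trivialize, present the conic bundle by a ternary quadratic form $q$. Applying Corollary~\ref{cConics} over $K$, I pick a basis in which $q$ takes the similarity normal form
\[
q \sim a x^2 + b y^2 + xz + c z^2, \qquad a, c \in K, \; b \in K\mult,
\]
whose discriminant in these coordinates is $b$ by equation~\eqref{fDisc2}. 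After clearing denominators and lifting the normalizing change of basis to $A$ (shrinking the neighborhood further if needed), arrange $a, c, b \in A$.

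In case~\textit{a)}, smoothness of the conic over $\xi_D$ forces $b \in A\mult$, so $q$ defines a smooth conic $Q \to \Spec A$, and its even Clifford algebra is an Azumaya $A$-algebra representing $\alpha$. In particular $\alpha$ extends to $\Br(A)$. Base-changing to the strict henselization $A^{\mathrm{sh}}$, whose fraction field is $K_v^\ur$ and whose residue field $\kappa\sep$ has vanishing Brauer group, $\alpha$ becomes trivial in $\Br(K_v^\ur)$ (a strictly henselian DVR with separably closed residue field of geometric origin has trivial Brauer group). By Remark~\ref{rRelativeBrauer} this places $\alpha$ in the tame subgroup, and the residue vanishes because it does so on the image of $\Br(A) \to \Br(K)$.

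For case~\textit{b)}, the hypothesis that $D$ is a reduced component of $\Delta$ gives $v(b) = 1$, so write $b = \pi u$ with $u \in A\mult$. The reduction of $q$ modulo $\pi$ is $\ol q = \ol a x^2 + xz + \ol c z^2$, a cross of lines by Corollary~\ref{cConics}. Over $\ol\kappa$ the two lines are parametrized by the roots of $\ol a s^2 + s + \ol c = 0$ with $s = x/z$; the substitution $t = \ol a s$ transforms this into the Artin--Schreier equation $t^2 + t = \ol a \ol c$, so the double cover of an open part of $D$ separating the two lines is the Artin--Schreier extension of class $\ol{ac} \in \kappa/\wp(\kappa) \isom H^1(\kappa, \Z/2)$ (cf.\ Remark~\ref{rArtinSchreier}). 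On the other side, the Brauer class of $q$ in characteristic~$2$ is represented, via its even Clifford algebra, by the Kato symbol $[ac, b) = [ac, \pi u) \in H^2(K, \Z/2(1))$, i.e.\ the cyclic algebra on generators $i, j$ satisfying $i^2 + i = ac$, $j^2 = \pi u$, and $j i j\inv = i+1$. Since $v(ac) = 0$ and $v(\pi u) = 1$, this symbol dies after adjoining $\sqrt{\pi u}$, hence is killed by $K_v^\ur$ and is therefore tame at $v$; the standard residue formula for Artin--Schreier cyclic algebras in characteristic~$2$ gives $r_v(\alpha) = \ol{ac} \in \kappa/\wp(\kappa)$, matching the class of the double cover computed above.

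The principal technical burden is the identification of the Brauer class of a smooth conic in characteristic~$2$ with the Kato symbol $[ac, b)$ together with its tame residue formula; both must be handled in the bad-torsion setting, where char-$\neq 2$ arguments cannot be imported wholesale. I would derive them by an explicit even Clifford algebra computation for ternary forms in characteristic~$2$ combined with the valuation-theoretic description of residues of Artin--Schreier cyclic algebras (cf.\ Remarks~\ref{rArtinSchreier} and~\ref{rTameSubgroupsForHigherCohomologies}). A secondary technical point is lifting the field-theoretic normal form of Corollary~\ref{cConics} from $K$ to the DVR $A$; this is handled by lifting a normalizing basis after shrinking the Zariski neighborhood, using smoothness of $B$ along $D$.
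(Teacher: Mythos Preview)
Your overall strategy matches the paper's---reduce to the local normal form, identify the Brauer class as a quaternion symbol, and read off the residue---but there is a genuine gap in your tameness argument for case~\textit{b)}. You write that $[ac,\pi u)$ ``dies after adjoining $\sqrt{\pi u}$, hence is killed by $K_v^\ur$''. In characteristic~$2$ the extension $K_v(\sqrt{\pi u})/K_v$ is purely inseparable, hence not contained in $K_v^\ur$ (which is a union of separable extensions); more to the point, the maximal tamely ramified extension $V$ of Remark~\ref{rRelativeBrauer} is obtained from $K_v^\ur$ by adjoining $m$-th roots of a uniformizer only for $m$ \emph{coprime to} $2$, so $\sqrt{\pi u}\notin V$ either. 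Splitting over $K_v(\sqrt{\pi u})$ therefore says nothing about tameness. The paper's argument fixes exactly this: the symbol $[ac,b)$ (in the paper's normalization, $[a,b)$ with $c=1$) is split by the Artin--Schreier extension $t^2-t=ac$, which is \emph{unramified} because $ac\in A$; hence $\alpha$ is split by $K_v^\ur$ and lies in the tame subgroup. You already computed this Artin--Schreier extension as the double cover, so the repair is immediate once you use it as the splitting field rather than $K(\sqrt{\pi u})$.

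Two smaller points. First, in case~\textit{a)} you identify $\Frac(A^{\mathrm{sh}})$ with $K_v^\ur$; these are not the same (the latter sits over the completion $K_v$), though the conclusion that $\alpha\in\im\bigl(\Br(A)\to\Br(K)\bigr)$ is tame with zero residue is correct and is what the paper uses. Second, your assertion $v(ac)=0$ is not justified by merely arranging $a,c\in A$; the paper handles this via Lemma~\ref{lNormalFormHC} (which forces $c=1$ and $a\in A$) together with the Step~1/Step~2 dichotomy, the latter using $\dim B\ge 2$ to produce an auxiliary unit $a'$ not congruent to a constant mod~$\pi$ so that the tame-residue formula (A.8) of \cite{GMS03} applies. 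Your sketch of ``lifting the normalizing basis to $A$'' elides this, and it is precisely here that the hypothesis $\dim B\ge 2$ enters.
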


We need the following auxiliary results before commencing with the proof.

\begin{lemma}\label{lNormalFormHC}
Under all the hypotheses of \textit{a)} or \textit{b)} of Theorem \ref{tResConicBundleAlongDiv}, except possibly the reducedness of $D$, let $P\in D$ be a point where the fibre $X_P$ is a smooth conic or a cross of lines, respectively. Then we can assume that Zariski locally around $P$ the conic bundle is defined by
\[
a x^2 + b y^2 + xz + z^2 =0
\]
with $x,y,z$ fibre coordinates and $a, b$ functions on $B$, both regular locally around $P$ and $b$ not identically zero. 
\end{lemma}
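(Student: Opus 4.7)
The plan is to Zariski-localize near $P$ and cut $q$ into its radical and non-degenerate parts, normalizing each separately along the lines of the proof of Corollary~\ref{cConics}, but now carried out over the local ring $\OO_{B,P}$ instead of over the function field. After shrinking $B$ to a small affine neighborhood $U \ni P$ both $\EE$ and $\LL$ trivialize, so the conic bundle is defined by a regular quadratic form $q = A x^2 + B y^2 + C z^2 + D xy + E xz + F yz$ with coefficients in $\OO(U)$.

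First I would locate the radical of the bilinear form $b_q$. In characteristic $2$ its matrix has vanishing diagonal, hence corank at least~$1$ everywhere. Under either hypothesis~\textit{a)} or~\textit{b)} the fibre $X_P$ sits in a row of the table of Corollary~\ref{cConics} with $\dim r(b_q)_P = 1$, so by upper semicontinuity of corank, $b_q$ has constant corank~$1$ on a neighborhood of $P$ and $L := \ker b_q$ is a line sub-bundle of $\EE$. Shrinking $U$ further so that line bundles trivialize, I split $\EE = L \oplus W$; since $L \subseteq r(b_q)$, this decomposition is $b_q$-orthogonal and yields $q = c u^2 \perp q|_W$, with $u$ a trivializing section of $L$, $c \in \OO(U)$ regular, and $q|_W = \alpha x^2 + \beta xz + \gamma z^2$ a regular rank-$2$ form whose coefficient $\beta$ is forced to be a unit near $P$ by non-degeneracy of $b_q|_W$.

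Next I would normalize $q|_W$ into the shape $a x^2 + xz + z^2$. If $\gamma(P) = 0$, I first apply $x \mapsto x + \mu z$ for a scalar $\mu \in k$; this replaces $\gamma$ by $\gamma + \beta\mu + \alpha\mu^2$, and since $k$ is algebraically closed, hence infinite, one can choose $\mu$ so that the value at $P$ is non-zero, making the new $\gamma$ a unit. Once $\gamma$ is a unit I apply the similarity $[a',c'] \simeq [a'c',1]$ from the proof of Corollary~\ref{cConics} (scale $z \mapsto z/c'$ and then multiply the whole form by $c'$), with $a' = \alpha/\beta$ and $c' = \gamma/\beta$, obtaining $q|_W \sim a x^2 + xz + z^2$ with $a \in \OO(U)$ regular. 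Renaming $u$ as $y$ and $c$ as $b$ gives the stated normal form $a x^2 + b y^2 + xz + z^2$ with all coefficients regular near~$P$.

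Finally, $b$ is not identically zero: in this normal form, the discriminant polynomial~\eqref{fDisc2} evaluates to $b$, and the discriminant is a non-zero function on $B$ because the geometric generic fibre of $\pi$ is a smooth conic. The main obstacle is really the step forcing $\gamma$ to be a unit at $P$: every subsequent scaling argument depends on its invertibility, so all the characteristic-$2$ subtleties (only $[a,c] \simeq [ac,1]$ rather than genuine diagonalization, no square roots available) are concentrated there, and it is the one place where one needs to use that $k$ is infinite rather than merely the formal algebra of $q$.
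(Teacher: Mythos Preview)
Your argument is correct. The route, however, differs from the paper's. The paper proceeds by a direct sequence of explicit coordinate substitutions: first scale to make the $xz$ coefficient equal to $1$ (using that some cross-term coefficient is a unit at $P$), then kill the $xy$ and $yz$ terms by shears $x\mapsto x+a_{yz}y$, $z\mapsto z+a_{xy}y$, arriving at $a_{xx}x^2+a_{yy}y^2+a_{zz}z^2+xz$; finally it makes $a_{zz}$ a unit (via $x\mapsto x+z$ if needed) and scales. Your approach instead first identifies the radical $L=\ker b_q$ as an honest line sub-bundle near $P$ (constant corank $1$), splits $\EE=L\oplus W$ orthogonally, and then normalizes the non-degenerate rank-$2$ piece $q|_W$ separately. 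This is more structural and explains \emph{why} the normal form exists (it is the radical/non-degenerate decomposition of Theorem~\ref{structure_theorem_quadratic_forms_char_2} carried out over $\OO_{B,P}$), whereas the paper's argument is shorter and entirely self-contained. One small point: when you rescale $q|_W$ by the unit $\gamma/\beta^2$, you rescale the whole form, so $b$ is $c\cdot\gamma/\beta^2$ rather than literally $c$; this is harmless but worth saying precisely. Also note that the paper's trick $x\mapsto x+z$ to force the $z^2$-coefficient to be a unit works over any field of characteristic $2$, whereas your genericity argument for $\mu$ uses that $k$ is infinite; since $k$ is algebraically closed this is moot here, but it is the one place where the two arguments have different scope.
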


\begin{proof}
Locally around $P$, the conic bundle is given by an equation
\[
a_{xx} x^2 + a_{yy}y^2 + a_{zz}z^2 + a_{xy} xy + a_{xz} xz + a_{yz} yz =0
\]
with the $a$'s regular functions locally around $P$. 
Since the fibre $X_P$ is smooth or a cross of lines, we have that one of the coefficients of the mixed terms, without loss of generality $a_{xz}$, is nonzero in $P$. Introducing new coordinates by the substitution $x \mapsto (1/a_{xz}) x$ (here and in the following we treat $x,y,z$ as well as the $a$'s as dynamical variables to ease notation) one gets the form
\[
a_{xx} x^2 + a_{yy}y^2 + a_{zz}z^2 + a_{xy} xy + xz + a_{yz} yz =0.
\]
Now the substitutions $x \mapsto x+ a_{yz} y, \quad y\mapsto y, \quad z \mapsto z + a_{xy} y$ transforms this into
\[
a_{xx} x^2 + a_{yy} y^2 + a_{zz}z^2 + xz = 0.
\]
Now if one of $a_{xx}$ or $a_{zz}$ is nonzero in $P$, without loss of generality $a_{zz}(P)\neq 0$, then multiplying the equation by $a_{zz}^{-1}$ and subsequently applying the substitution $x\mapsto a_{zz} x$ we obtain the desired normal form
\begin{gather}\label{fNormal}
a_{xx}x^2 + a_{yy} y^2 + xz + z^2 =0.
\end{gather}
But if both $a_{xx}$ and $a_{zz}$ vanish in $P$, then after applying
the substitution $x\mapsto x+z$, we get that $a_{zz}(P) \neq 0$ and
proceed as before.
\end{proof}

Before stating the next auxiliary result, we recall the existence of a
cup product homomorphism
\[
K_i(K) \tensor H^{n+1}(K,\Q_p/\Z_p(n)) \to H^{n+i+1}(K,\Q_p/\Z_p(n+i)),
\]
which restricts to the tamely ramified subgroups, see
\cite[p.~154,~(A.7)]{GMS03}.

\begin{proposition}\label{pCupProduct}
Let $K$ be a field of characteristic $2$ and $Q$ the conic defined by
$ax^2 + by^2 + xz + z^2=0$ for $a \in K$ and $b \in K\mult$.  Then the
Brauer class associated to $Q$ is the cup product $b \cup a$, via the
cup product homomorphism
$$
K_1(K) \tensor H^1(K,\Q_2/\Z_2(0)) \to H^2(K,\Q_2/\Z_2(1)) = \Br(K)\{2\},
$$
where we consider $a \in H^1(K,\Q_2/\Z_2)[2] = K/\wp(K)$ and $b \in
K_1(K) = K\mult$.
\end{proposition}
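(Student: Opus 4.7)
The plan is to identify $[Q] \in \Br(K)[2]$ with the Brauer class of the Artin--Schreier cyclic algebra $A = (L/K, \sigma, b)$, where $L = K[\alpha]/(\alpha^2 + \alpha + a)$ and $\sigma$ is its nontrivial automorphism (sending $\alpha \mapsto \alpha + 1$ when $L$ is a field). The identification $[A] = b \cup a$ is the standard description of the cup product on degree-one symbols in the Kato--Merkurjev framework for Galois cohomology with $\Q_p/\Z_p(n)$ coefficients, cf.\ \cite[App.~A]{GMS03}; so the proposition reduces to checking $[Q] = [A]$.

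If $a \in \wp(K)$, then $Q$ contains the $K$-point $(1:0:\alpha)$ with $\wp(\alpha) = a$, so $[Q] = 0$; simultaneously $a = 0$ in $K/\wp(K) = H^1(K, \Z/2)$, forcing $b \cup a = 0$. Assume henceforth $a \notin \wp(K)$, so that $L$ is a field. Over $L$, using $a = \alpha(\alpha+1)$, one factors
\[
ax^2 + xz + z^2 = (z + \alpha x)(z + (\alpha+1)x).
\]
Setting $u = z + \alpha x$ and $v = z + (\alpha+1)x$, the equation of $Q_L$ becomes $uv + by^2 = 0$, a split conic with obvious $L$-point $(1:0:0)$ in $(u:v:y)$-coordinates. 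The Galois action $\sigma$ swaps $u \leftrightarrow v$ while fixing $y$.

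Next I would pick the parametrization $\phi \colon \P^1_L \xrightarrow{\sim} Q_L$ sending $(s:t) \mapsto (s^2 : bt^2 : st)$ in $(u:v:y)$-coordinates. A short direct computation yields $\sigma(\phi)(s:t) = (bt^2 : s^2 : st) = \phi(bt : s)$, so the descent $1$-cocycle $c_\sigma = \phi^{-1} \circ \sigma(\phi) \in \mathrm{PGL}_2(L)$ is represented by $\bigl(\begin{smallmatrix} 0 & b \\ 1 & 0 \end{smallmatrix}\bigr) \in \mathrm{PGL}_2(K)$. Lifting to $\mathrm{GL}_2(K)$ by the same matrix and applying the connecting boundary $H^1(\mathrm{PGL}_2) \to H^2(\Gm) = \Br$ reduces to squaring the lift, giving $bI$. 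Consequently, $[Q]$ corresponds to the class of $b$ in $\Br(L/K) \cong K\mult/N_{L/K}(L\mult)$, which is precisely the cyclic algebra class $[A]$.

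The main obstacle is the cocycle bookkeeping in the middle paragraph: one must track the Galois twist $\sigma(\phi)$ correctly, using that the scalar entries of $\phi$ are $K$-rational while the coordinates $u, v$ are Galois-conjugate. Once this cocycle computation is in hand, the remaining identifications---the description of $\Br(L/K)$ for the degree-two cyclic extension and the formula $[A] = b \cup a$ for the cup product of degree-one symbols---are formal consequences of the framework already set up in the paper.
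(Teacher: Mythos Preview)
Your proof is correct and takes a genuinely different route from the paper's. The paper first identifies the Brauer class of $Q$ with the generalized quaternion algebra $[a,b)$ (presented by $i^2-i=a$, $j^2=b$, $ij=ji+j$), citing \cite[Ch.~1,~Exer.~4]{GS06}, then invokes \cite[Cor.~2.5.5b]{GS06} to pass from $[a,b)$ to the cyclic algebra $(\chi_{L/K},b)$, and finally unwinds the cup product via the Bockstein and \cite[Prop.~4.7.3]{GS06}. You instead bypass the quaternion presentation entirely and compute the Severi--Brauer cocycle of $Q$ directly by Galois descent: after splitting $Q$ over $L$ you write down an explicit parametrization and read off $c_\sigma = \bigl(\begin{smallmatrix}0&b\\1&0\end{smallmatrix}\bigr)$, whose coboundary in $\Br(L/K)\cong K\mult/\Norm_{L/K}(L\mult)$ is $b$, i.e.\ the cyclic algebra class. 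Your approach is more self-contained for the step $[Q]=[A]$ (no need to know the characteristic-$2$ quaternion normal form or cite its relation to the conic), while the paper's approach has the virtue of delegating everything to standard references. Both arguments converge at the same endpoint---the identification of the cyclic algebra with $b\cup a$---which you cite as standard in the Merkurjev framework and which the paper spells out explicitly.
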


\begin{proof}
The Brauer class associated to $Q$ is the generalized quaternion
algebra $[a,b)$, defined as the free associative $K$-algebra on
generators $i$ and $j$ with the relations
\[
i^2-i=a, \quad j^2=b, \quad ij = ji + j,
\] 
see \cite[Ch.~1,~Exer.~4]{GS06}.  Let $L/K$ be the Artin--Schreier
extension, which could be the split \'etale algebra, generated by
$x^2-x-a$ and $\chi_{L/K} : \Gal(K\sep/K) \to \Z/2\Z$ the canonically
associated character of the absolute Galois group of $K$.  
By
\cite[Cor.~2.5.5b]{GS06}, the quaternion algebra $[a,b)$ is
$K$-isomorphic to the cyclic algebra $(\chi_{L/K},b)$, generated as a
$K$-algebra by $L$ and an element $y$ with the relations
\[
y^2=b, \quad \lambda y = y \sigma(\lambda)
\]
where $\lambda \in L$ and $\sigma$ is the generator of $\Gal(L/K)$.

Letting $\delta : H^1(K,\Z/2) \to H^2(K,\Z)$ be the \emph{Bockstein
homomorphism} induced from the coboundary map in Galois cohomology
associated to the exact sequence of trivial Galois modules
\[
0 \to \Z \to \Z \to \Z/2 \to 0
\]
then $\delta : H^1(K,\Z/2) \to H^2(K,\Z)[2]$ is an isomorphism that
gives sense to Merkurjev's definition $H^1(K,Z/2Z(0)) :=
H^2(K,K_0(F\sep))[2] = H^2(K,\Z)[2]$.  By \cite[Prop.~4.7.3]{GS06},
the cup product pairing in Galois cohomology
\[
H^2(K,\Z) \times H^0(K,K\sep{}\mult) \to H^2(K,K\sep{}\mult) = \Br(F)
\]
has the property that the cup product of the class $\delta(a) \in
H^2(K,\Z)[2]$, where we consider $a\in K/\wp(K)=H^1(K,\Z/2)$, with the
class $b \in H^0(K,K\sep{}\mult) = K\mult$, results in the Brauer class of
the cyclic algebra $(\chi_{L/K},b) \in \Br(K)[2]$.

Finally, under the canonical identification $H^0(K,K\sep{}\mult) =
K\mult = K_1(K)$, the isomorphism $\delta : H^1(K,\Z/2) \to
H^2(K,\Z)[2]$, and the definition of the action of $K_1(K)$ on
$H^1(K,\Q_2/\Z_2(0))$, the cup product pairing
\[
H^1(K,\Z/2(0)) \times K_1(K) \to H^2(K,\Z/2(1)) = \Br(K)[2]
\]
is identified with the $2$-torsion part of the above cup product
pairing in Galois cohomology.  Since the cup product is commutative on
$2$-torsion classes, we get the desired formula.

We also point out that the relevant cocycle calculation in the proof
of this result in \cite[Prop.~30.4]{KMRT98}, though stated for $K$ of
characteristic not $2$, can be generalized to the case of
characteristic $2$ using the machinery of flat cohomology.
\end{proof}

\begin{proof}[Proof of Theorem \ref{tResConicBundleAlongDiv}]
Let $v=v_D$ be the divisorial valuation associated to $D$. We have to
check that in both cases of the Theorem, $\alpha\in \Br(K_v^\ur/K_v)$,
see Remark \ref{rRelativeBrauer}, in other words, that $\alpha$ is
split by $K_v^\ur$. We have the normal form of Lemma
\ref{lNormalFormHC} locally around the generic point of $D$. The conic
bundle is then obviously split by the Galois cover of the base defined
by adjoining to $k(B)$ the roots of $T^2 + T +a$ because then the
quadratic form in Lemma \ref{lNormalFormHC} acquires a zero. Moreover,
that Galois cover does not ramify in the generic point of $D$ because
$a$ has no pole along $D$.  Hence it defines an extension of $K_v$
contained in $K^\ur_v$. See also \cite[Ex.~1,~p.~67]{Artin67}.

By formula (\ref{fDisc2}) we find that the discriminant of $ax^2 +
by^2 + xz + z^2=0$ is given by $b$, hence by our assumption that $D$
is reduced, we can assume $b$ is a local parameter for $D$, in case
\textit{b)} of the Theorem, or a unit in the generic point of $D$ in
case~\textit{a)}. By Proposition~\ref{pCupProduct}, the Brauer class
$\alpha \in \Br(k(B))\{ 2\}= H^2 (k(B), \Q_2/\Z_2 (1))$ associated to
the conic bundle defined by the preceding formula is the cup product
$\alpha = b \cup a$ of the class $b \in K_1 (k(B)) = k(B)\mult$ and
the class $a \in H^1 (k(B), \Q_2/\Z_2)=k(B)/\wp(k(B))$.  We now
conclude the proof in a number of steps.

\textbf{Step 1.} If $\pi$ is a local equation for $D$ in $\OO_{B, D}$, then a polynomial in $a$ with coefficients in $k$ can only vanish along $D$ if $a$ is congruent modulo $\pi$ to some element in $k\mult$. If $a$ is not congruent modulo $\pi$ to an element in $k$, we consequently have that $k(a) \subset k(B)$ is a subfield of the valuation ring $\OO_{B, D}$ of $v$. By \cite[p.~154, sentence before formula (A.8)]{GMS03}, the element that $a$ induces in $H^1 (k(B), \Q_2/\Z_2)$ is in $H^1_{\mathrm{tame}, v} (k(B), \Q_2/\Z_2)$, and then formula (A.8) of loc.cit.\ implies
\[
r_v (b \cup a) = a|_D \in k(D)/\wp (k(D)) = H^1 (k(D), \Z /2)
\]
in case \textit{b)} of the Theorem, and $r_v(b\cup a)=0$ in case \textit{a)} because the element $b$ is then a unit in the valuation ring of $D$ (alternatively, in case \textit{a)}, the Brauer class of the conic bundle clearly comes from $\Br(A)$, where $A$ is the valuation ring of $D$, hence the residue is defined and is zero; see also proof of Theorem \ref{tResMerkUnr} below).  Since, in case \textit{b)}, $a|_D$ is precisely the element defining the Artin--Schreier double cover induced by the conic bundle on $D=(b=0)$, the residue is given by this geometrically defined double cover.

\textbf{Step 2.}  If $a$ is congruent modulo $\pi$ to an element in
$k$, and since $k$ is algebraically closed, we can make a change in
the fibre coordinate $z$ so that $a$ is actually a power of $\pi$
times a unit in $\OO_{B, D}$. Since $\dim B \ge 2$, we can find a unit
$a'\in \OO_{B, D}$ that is not congruent to an element in $k$ modulo
$\pi$, and write $a = (a-a')+ a'$. Now applying Step 1 to $a-a'$ and
$a'$ finishes the proof since the cup product $\cup$ is bilinear and
$r_v$ is linear, so $a|_D$, the element defining the Artin--Schreier
double cover induced by the conic bundle, is equal to the residue of
the conic bundle along $D$ in general.
\end{proof}

\begin{lemma}\label{BrauerGrDvrAlessandro}
Let $R$ be a complete discrete valuation ring with field of fractions $K$ and let $K^\mathrm{nr}$ be the maximal unramified extension of $K$, as before. Let $R^\mathrm{nr}$ be the integral closure of $R$ in $K^\mathrm{nr}$. Then $\Br(R)=H^2 (\Gal(K^\ur/K),R^\ur{}\mult)$.
\end{lemma}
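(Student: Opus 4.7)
The plan is to deduce the lemma from the Hochschild--Serre spectral sequence associated to the pro-Galois \'etale cover $\Spec R^{\ur} \to \Spec R$, whose Galois group $G := \Gal(K^{\ur}/K) \isom \Gal(k\sep/k)$ (where $k$ is the residue field of $R$) gives rise to
\[
E_2^{p,q} = H^p\bigl(G, H^q_\et(\Spec R^{\ur}, \Gm)\bigr) \Longrightarrow H^{p+q}_\et(\Spec R, \Gm).
\]
Since $R$ is a regular Noetherian scheme, $H^2_\et(\Spec R, \Gm)$ is entirely torsion by \cite[II,~Prop.~1.4]{Gro68}, so $\Br(R) = H^2_\et(\Spec R, \Gm)$. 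The lemma will therefore follow once I show that $H^q_\et(\Spec R^{\ur}, \Gm) = 0$ for $q = 1, 2$: the spectral sequence then collapses in the relevant range onto the $q = 0$ row, forcing $\Br(R) = H^2(G, R^{\ur}{}\mult)$.

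To establish the vanishing, first observe that since $R$ is complete, hence Henselian, one has $R^{\ur} = \varinjlim_L R_L$ where $L/K$ runs over the finite subextensions of $K^{\ur}$ and $R_L$ is the valuation ring of $L$; each $R_L$ is a complete DVR with residue field $k_L$ a finite separable extension of $k$, and $\varinjlim_L k_L = k\sep$. In particular, $R^{\ur}$ is a local ring, which gives the case $q = 1$ immediately: $H^1_\et(\Spec R^{\ur}, \Gm) = \Pic(R^{\ur}) = 0$. For $q = 2$, I would invoke compatibility of \'etale cohomology with filtered colimits of affine schemes together with the regularity of each $R_L$ to write $H^2_\et(\Spec R^{\ur}, \Gm) = \varinjlim_L \Br(R_L)$, then use the Henselian lifting property for Azumaya algebras to identify $\Br(R_L) \isom \Br(k_L)$ for each complete Noetherian local ring $R_L$, and pass to the colimit to obtain $\Br(R^{\ur}) = \Br(k\sep) = 0$.

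The principal obstacle is the vanishing $\Br(R^{\ur}) = 0$ in arbitrary characteristic: the quick argument via the residue field being $C_1$ only controls the prime-to-residue-characteristic torsion, whereas here $R$ (and $K$) may carry wildly ramified Brauer classes. The reduction to the finite stages $R_L$ followed by the specialization isomorphism $\Br(R_L) \isom \Br(k_L)$ for complete Noetherian local rings is what handles the wild and tame cases uniformly, after which the colimit and spectral-sequence formalism are routine.
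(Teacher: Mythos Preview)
Your argument is correct. The Hochschild--Serre spectral sequence for the pro-\'etale Galois cover $\Spec R^{\ur}\to\Spec R$ degenerates exactly as you describe once you know $\Pic(R^{\ur})=0$ and $\Br(R^{\ur})=0$, and your treatment of the latter is the right one: pass to the finite stages $R_L$, use that each is a complete (hence Henselian) local Noetherian ring so that the specialization map $\Br(R_L)\to\Br(k_L)$ is an isomorphism in all characteristics, and take the colimit to land in $\Br(k\sep)=0$. The point you flag about wild torsion is precisely why one must invoke the Henselian specialization isomorphism rather than a $C_1$-type argument, and you handle it correctly.

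By way of comparison, the paper does not give an argument at all: it simply cites Auslander--Brumer \cite{AB68}, pointing to specific passages where the statement is extracted. The content of those passages is essentially the ingredients you have assembled (triviality of $\Br$ of the strict henselization via reduction to the residue field, and Galois descent), so your proof is a self-contained unpacking of what the citation encodes rather than a genuinely different route. What your write-up buys is transparency about where the characteristic-$p$ issue enters and how it is resolved; the paper's citation buys brevity.
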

\begin{proof}
This is contained in\cite{AB68}, see the proof at the top of page 289, combined
with the remark in \S3, and the first sentence of the proof of Theorem~3.1.
\end{proof}

\begin{theorem}\label{tResMerkUnr}
Let $X$ be a smooth and projective variety over an algebraically
closed field $k$ of characteristic $p$. Assume $\alpha\in \Br(k(X))\{
p\}$ is such that the residue $r_{v_D}(\alpha )$ is defined in the
sense of Definition \ref{dMerkurjevResidues} and is trivial for all
divisorial valuations $v_D$ corresponding to prime divisors $D$ on
$X$. Then $\alpha \in \Br_\ur(k(X)) = \Br(X)$.

If $Z \subset X$ is an irreducible subvariety with local ring $\OO_{X, Z}$ and the assumption above is only required to hold for all prime divisors $D$ passing through $Z$, the class $\alpha$ comes from $\Br(\OO_{X, Z})$.
\end{theorem}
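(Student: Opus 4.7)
The plan is to localize at each prime divisor of $X$, use the cohomological description of the tame subgroup together with Lemma \ref{BrauerGrDvrAlessandro} to translate the vanishing of the residue into a lift of $\alpha$ across the Brauer group of the completed local ring, and then descend to the DVR itself. By Theorem \ref{tComparisonBrauer} we have $\Br(X) = \Br_{\valus{DIV/}X}(k(X))$, so for the first statement it suffices to show that $\alpha$ lies in the image of $\Br(\OO_{X,D}) \to \Br(k(X))$ for every prime divisor $D \subset X$.

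Fix such a $D$ and write $A = \OO_{X,D}$, $v = v_D$, $K = k(X)$, $K_v$ its completion, $R = \widehat{A}$, and $G = \Gal(K_v^\ur/K_v) \simeq \Gal(k(D)\sep/k(D))$, with $R^\ur$ the integral closure of $R$ in $K_v^\ur$. Because $\alpha$ is tame at $v$, its image $\alpha_{K_v}$ lies in $H^2(G,(K_v^\ur)\mult)$. The short exact sequence of $G$-modules
\[
1 \to (R^\ur)\mult \to (K_v^\ur)\mult \xrightarrow{\;v\;} \Z \to 0,
\]
where $v$ is surjective since $K_v^\ur/K_v$ is unramified, gives the exact segment
\[
H^1(G,\Z) \to H^2(G,(R^\ur)\mult) \to H^2(G,(K_v^\ur)\mult) \to H^2(G,\Z).
\]
Here $H^1(G,\Z) = \Hom_{\mathrm{cts}}(G,\Z) = 0$ as $G$ is profinite and $\Z$ is discrete torsion-free, and from $0 \to \Z \to \Q \to \Q/\Z \to 0$ together with $H^i(G,\Q) = 0$ for $i > 0$ one obtains $H^2(G,\Z) \simeq H^1(G,\Q/\Z) \simeq H^1(k(D),\Q/\Z)$. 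By Definition \ref{dMerkurjevResidues} the last arrow of the exact segment is exactly the Merkurjev residue $r_v$, and the hypothesis $r_v(\alpha) = 0$ shows that $\alpha_{K_v}$ lifts uniquely to $H^2(G,(R^\ur)\mult) = \Br(R) = \Br(\widehat{A})$, by Lemma \ref{BrauerGrDvrAlessandro}.

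It remains to descend from $\Br(\widehat{A})$ to $\Br(A)$. For this I would invoke the $p$-primary purity underlying Theorem \ref{tComparisonBrauer}, in the precise form established in \cite[Thm.~2.5]{ABBB18}: a class in $\Br(K)\{p\}$ lies in $\Br(\OO_{X,D})$ exactly when it is tamely ramified at $v_D$ with vanishing residue. This forces $\alpha \in \Br(\OO_{X,D})$, completing the first statement. For the second statement, Gabber's purity applied to the regular local ring $\OO_{X,Z}$ yields $\Br(\OO_{X,Z}) = \bigcap_{D \supset Z} \Br(\OO_{X,D})$ inside $\Br(K)$, the intersection running over prime divisors of $X$ containing $Z$; the first statement applied to each such $D$ places $\alpha$ in every factor, hence in the intersection.

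The hard part is this last descent: the Galois-cohomological argument above only places $\alpha_{K_v}$ in the image of $\Br(\widehat{A}) \to \Br(K_v)$, and recognizing that this forces $\alpha$ itself to come from $\Br(A)$---and, in higher codimension, from $\Br(\OO_{X,Z})$---requires precisely the characteristic-$p$ purity input that distinguishes this setting from the classical coprime-to-$p$ case.
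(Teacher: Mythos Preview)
Your Galois-cohomological reduction is exactly the paper's: same use of Theorem~\ref{tComparisonBrauer} to localize at prime divisors, same short exact sequence $1 \to (R^\ur)\mult \to (K_v^\ur)\mult \to \Z \to 0$, same identification of the boundary map with $r_v$, and same appeal to Lemma~\ref{BrauerGrDvrAlessandro} to land $\alpha_{K_v}$ in $\Br(\widehat A)$. Up to this point the arguments are interchangeable.

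The gap is in the descent step. You invoke \cite[Thm.~2.5]{ABBB18} as asserting that ``a class in $\Br(K)\{p\}$ lies in $\Br(\OO_{X,D})$ exactly when it is tamely ramified at $v_D$ with vanishing residue''. But that theorem is the source for Theorem~\ref{tComparisonBrauer}, i.e., the statement that $\Br(X)=\Br_{\valus{DIV/}X}(k(X))$; it is a purity statement phrased in terms of the map $\Br(A)\to\Br(K)$, not in terms of residues. The implication ``tame with trivial residue $\Rightarrow$ comes from $\Br(\OO_{X,D})$'' is precisely the content of the theorem you are proving, so quoting it here is circular. What you actually need is a genuine bridge from $\Br(\widehat A)$ back to $\Br(A)$: given that $\alpha\in\Br(K)$ maps into the image of $\Br(\widehat A)\to\Br(K_v)$, conclude that $\alpha$ already lies in the image of $\Br(A)\to\Br(K)$.

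The paper closes this gap with a direct argument (its Lemma~\ref{lAsher}): represent $\alpha$ by an Azumaya $K$-algebra $\sAA$, take a maximal $A$-order $\sAA_A\subset\sAA$ in the sense of Auslander--Goldman, and use that $(\sAA_A)_{\widehat A}$ is a maximal order over the complete DVR $\widehat A$, hence conjugate to the given Azumaya $\widehat A$-algebra and therefore itself Azumaya; faithful flatness of $\widehat A/A$ then forces $\sAA_A$ to be Azumaya over $A$. This is the missing ingredient in your proposal. Once you have it for each $D$ through $Z$, your use of Gabber purity for $\OO_{X,Z}$ to conclude the second statement is fine and matches the paper's intent.
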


\begin{proof}
The equality $\Br_\ur(k(X)) = \Br(X)$ follows from Theorem
\ref{tComparisonBrauer} taking into account
Remark~\ref{rUnramifiedBrauer}. We will show that under the
assumptions above, we have $\alpha \in
\Br_{\valus{DIV/}X}(k(X))$, which is enough by Theorem
\ref{tComparisonBrauer}. Putting $K=k(X)$ and $v=v_D$ and keeping the
notation of Definition \ref{dMerkurjevResidues} we have an exact
sequence\small
\[
\xymatrix{
H^2 (\Gal(K_v^\ur/K_v), A_v^\ur{}\mult)\{p\} \ar[r]   & H^2 (\Gal(K_v^\ur/K_v), K_v^\ur{}\mult)\{ p\} \ar[r]^(.575){r_v} & H^1 (k(v), \Q /\Z)\{p\} 
}
\]
\normalsize
resulting from the exact sequence of coefficients $ 1 \to A_v^\ur{}\mult \to K_v^\ur{}\mult \to \Z \to 1$ where $A_v^\ur{}\mult$ is the valuation ring, inside of $K_v^\ur$, of the extension of $v$ to $K_v^\ur$. Thus it suffices to show that classes in $\Br_{\mathrm{tame}, v}(K)\{p\} \subset \Br(K)\{p\}$ that, under the map
\[
\Br_{\mathrm{tame}, v}(K)\{ p\} \to H^2 (\Gal(K_v^\ur/K_v), K_v^\ur{}\mult)\{ p\} \subset \Br(K_v)\{p\},
\]
land in the image of $H^2 (\Gal(K_v^\ur/K_v), A_v^\ur{}\mult)\{p\}$ actually come from $\Br(\OO_{X, \xi_D})\{p\}$ where $\OO_{X, \xi_D}$ is the local ring of $D$ in $k(X)$. Now, by Lemma \ref{BrauerGrDvrAlessandro}, we have
\[
H^2 (\Gal(K_v^\ur/K_v), A_v^\ur{}\mult) \simeq \Br(A_v)
\]
A class $\gamma$ in $\Br(K)$ whose image $\gamma_v$ in $\Br(K_v)$ is contained in $\Br(A_v)$ comes from the valuation ring $A= \OO_{X, \xi_D}$ of $v$ in $K$ by Lemma \ref{lAsher} below, hence is unramified. 
\end{proof}

\begin{lemma}\label{lAsher}
Let $K$ be the function field of an algebraic variety and $v$ a discrete rank $1$ valuation of $K$. Let $A\subset K$ be the valuation ring, let $K_v$ be the completion of $K$ with respect to $v$, and let $A_v \subset K_v$ be the valuation ring of the extension of $v$ to $K_v$. Then a Brauer class $\alpha \in \Br(K)$ whose image in $\Br(K_v)$ comes from a class $\alpha^{\sharp} \in \Br(A_v)$ is already in the image of $\Br(A)$. 
\end{lemma}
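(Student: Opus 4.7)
The plan is to construct an Azumaya $A$-algebra model of $\alpha$ by taking a maximal $A$-order in a central simple $K$-algebra representing $\alpha$, and then using the hypothesis to verify that this order is Azumaya.

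Represent $\alpha$ by a central simple $K$-algebra $B$, and choose a maximal $A$-order $\Lambda \subset B$, which exists since $A$ is a Dedekind domain (in fact a DVR). Passing to the completion, $\Lambda_v := \Lambda \otimes_A A_v$ is then a maximal $A_v$-order in $B_v := B \otimes_K K_v$; maximality of orders is preserved under completion at a prime of a Dedekind domain (see Reiner, \emph{Maximal Orders}).

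The key input is the structure theory of maximal orders over the complete discrete valuation ring $A_v$: writing $B_v \cong M_n(D)$ with $D$ the unique central division algebra in the class, any two maximal $A_v$-orders in $B_v$ are conjugate (both to $M_n(\OO_D)$, where $\OO_D$ is the unique maximal order of $D$). Such a maximal order is Azumaya over $A_v$ precisely when $D$ is unramified over $K_v$, equivalently when $[B_v]$ lies in the image of the injection $\Br(A_v) \hookrightarrow \Br(K_v)$. But this is exactly what the hypothesis provides through $\alpha^{\sharp}$, so $\Lambda_v$ must be Azumaya over $A_v$.

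Finally, apply faithfully flat descent along the completion $A \to A_v$: since $\Lambda$ is a finitely generated $A$-module (being a lattice in $B$) and the Azumaya condition on a finitely generated algebra over the Noetherian local ring $A$ is both preserved and reflected by faithfully flat base change to the completion, we conclude that $\Lambda$ is Azumaya over $A$. Its class $[\Lambda] \in \Br(A)$ therefore maps to $[B] = \alpha$ in $\Br(K)$, which proves the lemma. The main nontrivial step is the middle one, the classical structural identification of Azumaya maximal orders over a complete DVR with unramified central division algebras; given this, the outer steps are routine consequences of the theory of orders over Dedekind domains and faithfully flat descent.
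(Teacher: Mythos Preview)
Your proof is correct and follows essentially the same route as the paper: take a maximal $A$-order in a representing central simple algebra, use conjugacy of maximal orders over the complete DVR $A_v$ together with the hypothesis to see the completed order is Azumaya, then descend along the faithfully flat map $A \to A_v$. The only cosmetic difference is that the paper makes the Azumaya representative over $A_v$ explicit (after matching degrees) rather than invoking the unramified-division-algebra characterization, but the content is identical.
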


\begin{proof}
This is a special case of \cite[Lemma 4.1.3]{Ha67} or \cite[Lemma 4.1]{CTPS12}, but we include a proof for completeness. 

Suppose the class $\alpha$ is represented by an Azumaya algebra $\sAA$
over $K$, and that $\alpha^{\sharp}$ is represented by an Azumaya
algebra $\BB$ over $A_v$. By assumption, $\sAA$ and $\BB$ become
Brauer equivalent over $K_v$, and we can assume that they even become
isomorphic over $K_v$ by replacing $\sAA$ and $\BB$ by matrix algebras
over them so that they have the same degree. Let $\sAA_A$ be a maximal
$A$-order of the algebra $\sAA$ in the sense of Auslander--Goldman
\cite{AG60}, which means that $\sAA_A$ is a subring of $\sAA$ that is
finitely generated as an $A$-module, spans $\sAA$ over $K$ and is
maximal with these properties. We seek to prove that $\sAA_A$ is
Azumaya.  Now we know that the base change $(\sAA_A)_{A_v}$ is a
maximal order, but also any Azumaya $A_v$-algebra is a maximal order,
and by \cite[Prop.~3.5]{AG60}, any two maximal orders over a rank $1$
discrete valuation ring are conjugate, so in fact the base change
$(\sAA_A)_{A_v}$ is Azumaya because $\BB$ is.  But then this implies
that $\sAA_A$ is Azumaya since $A_v$ is faithfully flat over $A$, so
if the canonical algebra homomorphism $\sAA_A \otimes \sAA_A \to
\mathrm{End} (\sAA_A)$ becomes an isomorphism over $A_v$, it is
already an isomorphism over $A$.
\end{proof}

\begin{remark}\label{rMoreGeometricProofHC}
Here is a more geometric proof of Theorem \ref{tResMerkUnr} for the case that $D$ is as in Theorem \ref{tResConicBundleAlongDiv}\textit{b)} and the class $\alpha$ is a $2$-torsion class represented by a conic bundle $\pi \colon Y \to X$ itself. In that case, it suffices to show that there exists a birational modification 
\[
\xymatrix{
Y' \ar[d]\ar@{-->}[r] & Y \ar[d] \\
X' \ar[r]  & X
}
\]
with $Y' \to X'$ a conic bundle square birational to $Y\to X$, $X'\to X$ an isomorphism over the generic point of $D$ and such that the general fibre of $Y'$ over the strict transform of $D$ on $X'$ is a smooth conic. 

We can assume the normal form from Lemma \ref{lNormalFormHC}
\[
a x^2 + b y^2 = xz + z^2
\]
whence in characteristic $2$
\[
b y^2 = ax^2 + xz + z^2.
\]
By assumption, the right hand side factors modulo $b$, that is, there is a function $\alpha$ on $(b=0)$ such that
\[
(\alpha x + z) (1+\alpha x + z) = a|_{(b=0)} x^2 + xz + z^2. 
\]
Let $\alpha'$ some extension of $\alpha$ to a neighborhood of $(b=0)$ in the base. Then the matrix
\[
\begin{pmatrix}
\alpha'  & 1 \\
1+\alpha' &       1
\end{pmatrix}
\]
has determinant $1$, and thus applying the coordinate transformation 
\begin{align*}
x' &= \alpha' x + z\\
z' &= (1+ \alpha')x + z
\end{align*}
one gets 
\[
b y^2 = x'z' + b(u (x')^2 + v x'z' + w (z')^2).
\]
Applying $x'\mapsto b x''$ one gets
\[
b y^2 = b x''z' + b(ub^2 (x'')^2 + vb x''z' + w (z')^2)
\]
Outside of $b=0$ one can divide by $b$ and gets
\[
y^2 = x''z' + ub^2 (x'')^2 + vb x''z' + w (z')^2.
\]
The derivative with respect to $x''$ is
\[
z' + vb z' = (1+vb)z'
\]
and the derivative with respect to $z'$
\[
(1+vb)x''.
\]
Now $1+vb$ is invertible in a neighborhood of the generic point of $D$ and thus the singularities of this conic bundle are contained in
\[
x''=z'=0.
\]
Substituting in the given equation we also get $y^2=0$ and thus the transformed conic bundle has smooth total space over a neighborhood of the generic point of $D$. 
\end{remark}

\section{Discriminant profiles of conic bundles in characteristic two: an instructive example}\label{sDiscriminantsChar2}

In this Section, we work over an algebraically closed ground field $k$. First $k$ may have arbitrary characteristic, later we will focus on the characteristic two case. Let $X$ be a conic bundle over a smooth projective base $B$ as in Definition \ref{dConicBundle}.

\begin{definition}\label{dResidueProfile}
We denote by $B^{(1)}$ the set of all valuations of $k(B)$ corresponding to prime divisors on $B$. A conic bundle $\pi \colon X \to B$ determines a Brauer class $\alpha \in \Br(k(B))[2]$. Moreover, we have natural maps, for $\mathrm{char}(k)\neq 2$,
\[
\xymatrix{
\Br(k(B))[2] \ar[r]^{\oplus \partial_v\quad\quad\quad\quad\quad\quad\quad\quad}  & \bigoplus_{v \in B^{(1)}} H^1 (k(v), \Z /2) \simeq \bigoplus_{v \in B^{(1)}} k(v)\mult/k(v)\multtwo
}
\]
where $\partial_v$ are the usual residue maps as in, for example, \cite[Chaper 6]{GS06}, see also \cite[\S 3.1]{Pi16}; and for $\mathrm{char}(k)=2$,
\[
\xymatrix{
\Br(k(B))[2] \ar[r]^{\oplus r_v\quad\quad\quad\quad\quad\quad\quad\quad}  & \bigoplus_{v \in B^{(1)}} H^1 (k(v), \Z /2) \simeq \bigoplus_{v \in B^{(1)}} k(v)/\wp (k(v))
}
\]
where $r_v$ is the residue map as in Definition
\ref{dMerkurjevResidues}, provided it is defined for $\alpha$. In both
of these case, we call the image of $\alpha$ in $\bigoplus_{v \in
B^{(1)}} k(v)\mult/k(v)\multtwo$ in the first case, and in
$\bigoplus_{v \in B^{(1)}} k(v)/\wp (k(v))$ in the second case, the
residue profile of the conic bundle $\pi \colon X \to B$. Note that
the $v$'s for which the component in $H^1(k(v), \Z /2)$ of the residue
profile of a conic bundle is nontrivial are a (possibly proper) subset
of the divisorial valuations corresponding to the discriminant
components of the conic bundle.
\end{definition}

One main difference between characteristic not equal to $2$ and equal
to $2$ (besides the fact that the residue profiles are governed by
Kummer theory in the first case and by Artin--Schreier theory in the
second case) is the following: for $\mathrm{char}(k)\neq 2$ and $B$,
for concreteness and simplicity of exposition, a smooth projective
rational surface, the residue profiles of conic bundles that can occur
can be characterised as kernels of another explicit morphism, induced
by further residues; more precisely, there is an exact sequence
\begin{gather}\label{fBrauer}
\xymatrix{
0 \ar[r] & \Br(k(B))[2] \ar[r]^(.375){\oplus \partial_v}  & \bigoplus_{v \in
B^{(1)}} H^1 (k(v), \Z /2) \ar[r]^{\oplus\partial_p} & \bigoplus_{p
\in B^{(2)}} \mathrm{Hom}(\bbmu_2, \Z/2) 
}
\end{gather}
where $B^{(2)}$ is the set of codimension 2 points of $B$, namely, the
close points when $S$ is a surface, see \cite[Thm.~1]{A-M72},
\cite[Prop.~3.9]{Pi16}, but also the far-reaching generalization via
Bloch--Ogus--Kato complexes in \cite{Ka86}. The maps $\partial_p$ are
also induced by residues, more precisely, if $C\subset B$ is a curve,
$p\in C$ a point in the smooth locus of $C$, then
\[
\partial_p \colon H^1 (k(C), \Z/2) = k(C)\mult/k(C)\multtwo \to \mathrm{Hom}(\bbmu_2, \Z/2)_p \simeq \Z/2
\]
is just the valuation taking the order of zero or pole of a function
in $k(C)\mult/k(C)\multtwo$ at $p$, modulo $2$ (if $C$ is not smooth
at $p$, one has to make a slightly more refined definition involving
the normalisation).

One has the fundamental result of de Jong~\cite{deJ04},
\cite[Thm.~4.2.2.3]{Lieb08} (though for 2-torsion classes, it was
proved earlier by Artin~\cite[Thm.~6.2]{Artin82}) that for fields of
transcendence degree $2$ over an algebraically closed ground field $k$
(of any characteristic), the period of a Brauer class equals the
index, hence that every class in $\Br(k(B))[2]$ can be represented by
a quaternion algebra, i.e., by a conic bundle over an open part of
$B$.

However, in characteristic $2$, we cannot expect a sequence that
na{\"\i}vely has similar exactness properties as the one in
(\ref{fBrauer}), as the following example shows.

\begin{example}\label{eHC}
Let $X \subset \P^2 \times \P^2 \to \P^2$ be the conic bundle defined by an equation 
\[
	Q = ax^2 + a xz + by^2 + byz + c z^2=0,
\]
where $x,y,z$ are fibre coordinates in the ``fibre copy" $\P^2$ in $\P^2\times \P^2$, and $a, b, c$ are general linear forms in the homogeneous coordinates $u, v, w$ on the base $\P^2$. Then the discriminant is of degree $3$ and consists
of the three lines given by $a=0$, $b=0$ and $a=b$, which intersect in the point $P=(0:0:1)$. Indeed, if we want to find the points with coordinates $(u:v:w)$ on the base such that the fibre of the conic bundle over this point is singular, in other words, is such that there exist $(x:y:z)$ in $\P^2$ satisfying
\[
Q_x=az=0, \quad Q_y= bz=0, \quad  Q_z= by+ax= 0
\]
and also $ax^2 + axz + by^2 + byz + cz^2= 0$  (Euler's relation does not automatically imply the vanishing of the equation of the conic itself because the characteristic is two), then we have to look for those points $(u:v:w)$ where
\[
\begin{pmatrix}
a & b & 0\\
0 & 0 & a\\
0 & 0 & b\\
\sqrt{a} & \sqrt{b} & \sqrt{c}
\end{pmatrix}
\]
has rank less than or equal to $2$, which, on quick inspection, means $a=0, b=0$, or $a=b$.

More precisely, the conic bundle induces Artin--Schreier double covers ramified only in $P$ on each of those lines: For $a=0$ we have
\[
	Q_{a=0} = b (y^2 + yz) + c z^2
\]
which describes a nontrivial Artin--Schreier cover ramified only at $b=0$. 
The same happens on the line $b=0$ and also on the line $a=b$:
\begin{align*}
	Q_{a=b} 
	&= a(x^2+xz+y^2+yz) + c z^2 \\
	&=a\bigl( (x^2+y^2) + (x+y)z \bigr) + c z^2 \\
	&=a\bigl( (x+y)^2 + (x+y)z \bigr) + cz^2.
\end{align*}
\end{example}

The preceding example shows that we can indeed not expect a na{\"\i}ve
analogue of the sequence (\ref{fBrauer}) in characteristic $2$: to
define a reasonable further residue map to codimension 2 points, the
only thing that springs to mind here would be to assign some measure
of ramification at $P$ for each of the three Artin--Schreier
covers. But the resulting ramification measures would have to add to
zero (modulo $2$), and would have to be the same for each of the
covers, so that only the slightly ungeometric option to assign
ramification zero would remain. Note that the conic bundle in Example
\ref{eHC}, when lifted to characteristic $0$ by interpreting the
coefficients in the defining equation in $\Z$, has discriminant
consisting of the triangle of lines $a=0, b=0, 4c -a-b=0$, with double
covers over each of the lines ramified in the vertices of the
triangle. That might suggest that we should define a further residue
map also in characteristic $2$ by using local lifts to characteristic
$0$ and then summing the ramification indices in those points that
become identical when reducing modulo $2$, an idea that is reminiscent
of constructions in log geometry. But we have not succeeded in
carrying this out yet.

Moreover, the theory in \cite{Ka86}, although developed also in cases
where the characteristic equals the torsion order of the Brauer
classes under consideration, gives no satisfactory solution either
because the arithmetical Bloch--Ogus complex in \cite[\S 1]{Ka86} we
would need to study would be the one for parameters $i=-1, q=0$ and
then condition (1.1) in loc.cit. is not satisfied, whence the further
residue map we are looking for is undefined.

This seems to indicate that we have to do without a sequence such as (\ref{fBrauer}), and this is exactly what we will do in Section \ref{sFormulaBrauerChar2}: we will simply assume  existence of certain Brauer classes with predefined residue profiles, and we will prove this existence in practice, such as in the examples in Section \ref{sExamplesNontrivialBrauerChar2}, by writing down conic bundles over the bases under consideration that have the sought-for residue profiles. 

In fact, the next result partly explains Example \ref{eHC} and also shows that the situation in characteristic $2$ is even funnier.

\begin{theorem}\label{tDiscriminantProfilesChar2}
Let $\pi \colon X \to B$ be a conic bundle in characteristic $2$, where $B$ is again a smooth projective surface. Let $\Delta$ be its discriminant. Then there is no point $p$ of $\Delta$ locally analytically around which $\Delta$ consists of two smooth branches $\Delta_1$, $\Delta_2$ intersecting transversely in a point $p$ such that above $p$ the fibre of $X$ is a double line, and near $p$, the fibres over points in $\Delta_1\backslash \{p\}$ and $\Delta_2\backslash \{p\}$ are two intersecting lines in $\P^2$. 

On the other hand, in characteristic not two, the above is the generic local normal form of the discriminant of a conic bundle around a point above which the fibre is a double line.
\end{theorem}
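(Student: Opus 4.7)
The plan is a local tangent-cone calculation at the putative crossing point $p$, using the explicit discriminant formula~\eqref{fDisc2}. Passing to the formal completion $\widehat{\OO}_{B,p}\cong k[[u,v]]$ and trivializing $\EE$, the conic bundle is cut out by a quadratic form
\[
q=a_{xx}x^2+a_{yy}y^2+a_{zz}z^2+a_{xy}xy+a_{xz}xz+a_{yz}yz
\]
with $a_{ij}\in k[[u,v]]$. Since $X_p$ is by hypothesis a double line, $q|_p$ is the square of a nonzero linear form; after a linear change of the fiber coordinates we may arrange $q|_p = z^2$, so that $a_{zz}(p)$ is a unit and all other $a_{ij}$ lie in $\mathfrak{m}_p$.

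The next step is to read off the initial form of $f = a_{xy}a_{yz}a_{xz} + a_{xz}^2 a_{yy} + a_{yz}^2 a_{xx} + a_{xy}^2 a_{zz}$ at $p$. Of the four summands, the first three visibly lie in $\mathfrak{m}_p^3$ (each is a product of three elements of $\mathfrak{m}_p$), while the last one lies in $\mathfrak{m}_p^2$ with class $a_{zz}(p)\,\bar a_{xy}^{\,2}$ modulo $\mathfrak{m}_p^3$, where $\bar a_{xy}$ is the image of $a_{xy}$ in $\mathfrak{m}_p/\mathfrak{m}_p^2$. Hence, if $\bar a_{xy}=0$, the curve $\Delta$ has multiplicity at least $3$ at $p$ and cannot consist of two smooth transverse branches; and if $\bar a_{xy}\neq 0$, the initial form of $f$ is a nonzero scalar times the square of a single linear form, so the tangent cone of $\Delta$ at $p$ is a doubled line and not a pair of distinct lines. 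I would also record that the conclusion is independent of choices: any two trivializations and rescalings of $q$ change $f$ by a unit (a unit cube times $(\det M)^2$), and multiplying the initial form by a nonzero constant cannot convert a perfect square into a product of two linearly independent linear forms.

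The main thing to appreciate is that the cubic summand $4a_{xx}a_{yy}a_{zz}$ present in formula~\eqref{fDiscNot2} \emph{disappears} from the discriminant in characteristic $2$; it is precisely this summand which, in characteristic different from $2$, contributes the mixed term $4\bar a_{xx}\bar a_{yy}$ to the initial form that is needed to produce two distinct linear factors. Once that observation is in hand, the positive characteristic part of the theorem is essentially forced, and the closing sentence of the theorem is verified by the parallel computation using~\eqref{fDiscNot2}: the initial form at such a point becomes $4\bar a_{xx}\bar a_{yy}-\bar a_{xy}^{\,2}$, a quadratic form on $\mathfrak{m}_p/\mathfrak{m}_p^2$ that is generically nondegenerate and hence splits as two distinct linear factors, as one sees concretely on the local model $\epsilon_1 x^2 + \epsilon_2 y^2 + z^2 = 0$ with discriminant $4\epsilon_1\epsilon_2$.
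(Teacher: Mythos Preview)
Your argument is correct and is essentially the same as the paper's: both reduce to computing the degree-$2$ part of the discriminant polynomial at a point where the fibre is a double line, and both find that this initial form is a perfect square (in characteristic~$2$) versus a generically nondegenerate binary quadratic form (in characteristic~$\neq 2$). The only cosmetic differences are that the paper normalizes the double line to $x^2=0$ (so $a_{xx}=1$, leading term $a_{yz}^2$) while you normalize to $z^2=0$ (so $a_{zz}$ is a unit, leading term $a_{zz}(p)\bar a_{xy}^{\,2}$), and that the paper phrases the passage to local coordinates via a classifying map $f\colon U\to\P^5$ to the universal conic, whereas you work directly in $\widehat{\OO}_{B,p}$; after relabelling $x\leftrightarrow z$ these are the same computation.
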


\begin{proof}
Let $p \in \Delta$ be a point in the discriminant. Then, as in
Remark~\ref{rDiscriminants} and Definition~\ref{dDiscriminants}, let
$\P^2$ have homogeneous coordinates $(x:y:z)$ and $X_\univ
\longrightarrow \P$ be the universal conic bundle, and let $U \subset
B$ be a Zariski open neighborhood of $p$ such that $\Delta_i \cap U
\neq \emptyset$ for every irreducible component of $\Delta$ passing
through $p$, and such that there is a morphism $f \colon U \to \P$
realizing $\pi|_{\pi^{-1}(U)} \colon X_{\pi^{-1}(U)} \to U$ as
isomorphic to the pull-back via $f$ of the universal conic bundle.

Besides $\Delta_\univ \subset \P$, there is also the locus $\RR_1
\subset \P$ of double lines, defined for $\mathrm{char}(k)\neq 2$ by
the vanishing of the two by two minors of the associated symmetric
matrix yielding the generic conic (which coincides with the image of
the Veronese embedding $\P^2 \to \P^5=\P$), and for
$\mathrm{char}(k)=2$ by
\[
\RR_1 = \left\{  a_{xy} = a_{xz} = a_{yz} =0  \right\}.
\]
Let $f(p)=q$ and assume $q\in \RR_1$; after a coordinate change we can
assume (for all characteristics of $k$) that $q$ has coordinates
$a_{xx}=1$ and all other coordinates equal to zero. Expanding the
equation (\ref{fDisc2}) locally around the point $q$, we get (denoting
the dehomogenized affine coordinates by the same letters) the
following local equation of $\Delta_{\mathrm{univ}}$ around $q$ (which
becomes the origin in these affine coordinates)
\begin{gather}\label{fD2}
a_{yz}^2 + a_{xy}^2a_{zz} + a_{xy}a_{yz}a_{xz}+ a_{xz}^2 a_{yy}.
\end{gather}
The leading term is $a_{yz}^2$, whereas in characteristic not equal to $2$, the same procedure applied to (\ref{fDiscNot2}) yields
\begin{gather}\label{fDNot2}
\left( 4 a_{yy}a_{zz} - a_{yz}^2 \right) + a_{xy}a_{yz}a_{xz} - a_{xz}^2 a_{yy}  - a_{xy}^2 a_{zz}
\end{gather}
with leading term $4 a_{yy}a_{zz} - a_{yz}^2$. Now the discriminant $\Delta\cap U$ is given, in the characteristic $2$ case, by
\[
f^*(a_{yz})^2 + f^*(a_{xy})^2f^*(a_{zz}) + f^*(a_{xz})^2 f^*( a_{yy}) + f^*(a_{xy})f^*(a_{yz})f^*(a_{xz})
\]
showing that the projectived tangent cone to $\Delta$ at $p$ is either nonreduced of degree $2$ or has degree at least three (in Example \ref{eHC} the latter possibility occurs). This proves the  first assertion of the Theorem.

Also notice that the local normal form ruled out in characteristic $2$ by the above Theorem in a neighborhood of a point of the discriminant where the fibre is a double line, is in fact the generic local normal form in characteristic not equal to two! Indeed, by (\ref{fDNot2}), the tangent cone to $\Delta$ in $p$ is generically a cone over a nonsingular conic in $\P^1$, in other words, equal to two distinct lines. 
\end{proof}

\section{Nontriviality of the unramified Brauer group of a conic
bundle threefold in characteristic two}\label{sFormulaBrauerChar2}

We seek to prove the following result.

\begin{theorem}\label{tBrauerNontrivial}
Let $k$ be an algebraically closed field of characteristic $2$ and $B$
a smooth projective surface over $k$.  Let $\pi \colon X
\longrightarrow B$ be a conic bundle with discriminant $\Delta =
\cup_{i\in I} \Delta_i$ (as in Definition~\ref{dDiscriminants}) with
irreducible components~$\Delta_i$.  Suppose that the conic bundle is
tamely ramified over each $\Delta_i$ (as in Theorem
\ref{tResConicBundleAlongDiv}\textit{b)}); in particular, each 
$\Delta_i$ has multiplicity $1$.
Let $\alpha_i \in H^1
(k(\Delta_i), \Z /2) = k(\Delta_i)/\wp (k(\Delta_i))$ be the element
determined by the Artin--Schreier double cover induced by $\pi$ over $\Delta_i$.

Assume that one can write $I = I_1 \sqcup I_2$ with both $I_1, I_2$ nonempty such that:
\begin{enumerate}
\item There exists a conic bundle $\psi\colon Y\to B$ over $B$, or
possibly on a birational modification $B'\to B$, that induces a Brauer
class in $\Br(k(B))$ with residue profile (as in Definition
\ref{dResidueProfile}) given by $(\alpha_i)_{i\in I_1} \in
\bigoplus_{i\in I_1} H^1 (k(\Delta_i), \Z/2)$, and such that for any
point $P$ in the intersection of some $\Delta_i$ and $\Delta_j$, $i\in
I_1, j\in I_2$ (in case we work on some $B'$, this should hold for any
point $P$ lying over such an intersection), the fibre $Y_P$ is a cross
of two lines in $\P^2$.
\item
There exist $i_0\in I_1$ and $j_0\in I_2$ such that $\alpha_{i_0}$ and $\alpha_{j_0}$ are nontrivial.
\end{enumerate}
Then $\Br_\ur(k(X))[2]$ is nontrivial. 
\end{theorem}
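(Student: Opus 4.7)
My plan is to exhibit a nontrivial unramified $2$-torsion Brauer class on $k(X)$ as the pullback $\pi^*\beta$ of the Brauer class $\beta \in \Br(k(B))[2]$ represented by the auxiliary conic bundle $\psi \colon Y \to B'$ of hypothesis \textit{a)}. Both nontriviality and unramifiedness will be checked by comparing residue profiles and using functoriality of Merkurjev's residue maps from Definition~\ref{dMerkurjevResidues}.

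Let $\alpha \in \Br(k(B))[2]$ denote the Brauer class of $\pi$; by Theorem~\ref{tResConicBundleAlongDiv}\textit{b)} its residue profile is $(\alpha_i)_{i\in I}$. Set $\gamma := \alpha + \beta$, which by additivity of residues has residue profile $(\alpha_j)_{j\in I_2}$. Since the generic fibre $X_\eta$ is the Severi--Brauer variety over $k(B)$ of class $\alpha$, we have $\pi^*\alpha = 0$, so $\pi^*\beta = \pi^*\gamma$ in $\Br(k(X))$. For nontriviality, Amitsur's theorem identifies the kernel of $\pi^* \colon \Br(k(B))[2] \to \Br(k(X))[2]$ with the cyclic subgroup $\{0, \alpha\}$; it thus suffices to rule out $\beta = 0$ and $\gamma = 0$, which is immediate from hypothesis \textit{b)}: $\beta$ has the nontrivial residue $\alpha_{i_0}$ at $v_{\Delta_{i_0}}$, and $\gamma$ has the nontrivial residue $\alpha_{j_0}$ at $v_{\Delta_{j_0}}$.

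For unramifiedness, by Theorem~\ref{tComparisonBrauer} and Remark~\ref{rUnramifiedBrauer} it is enough to verify that $r_v(\pi^*\gamma)$ is defined and vanishes for every divisorial valuation $v$ of $k(X)$. I stratify according to the restriction $v_B := v|_{k(B)}$. If $v_B$ is trivial (the horizontal case), then $\gamma \in \Br(k(B)) \subset \Br(A_v)$, and $\pi^*\gamma$ is automatically unramified at $v$ by the argument of Theorem~\ref{tResMerkUnr}. If $v_B$ is a divisorial valuation of $k(B)$ distinct from all $v_{\Delta_j}$ with $j\in I_2$, then $\gamma$ is unramified at $v_B$, and tame functoriality of residues in Galois cohomology (cf.\ \cite[App.~A]{GMS03}) yields $r_v(\pi^*\gamma)=0$. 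The crucial case is $v_B = v_{\Delta_j}$ for some $j\in I_2$: here $r_{v_B}(\gamma) = \alpha_j$, and tame functoriality gives
\[
r_v(\pi^*\gamma) \;=\; e\cdot\res_{k(v)/k(\Delta_j)}(\alpha_j) \;\in\; k(v)/\wp(k(v)),
\]
where $e$ is the ramification index. The geometric input now is that $k(v)$ necessarily contains the Artin--Schreier extension $L_j$ of $k(\Delta_j)$ defined by $\alpha_j$: any prime divisor $D$ on a model of $X$ whose image in $B$ is $\Delta_j$ factors through $\pi^{-1}(\Delta_j)$, whose generic fibre is by Theorem~\ref{tResConicBundleAlongDiv}\textit{b)} a cross of two lines swapped by $\Gal(L_j/k(\Delta_j))$, forcing $L_j \hookrightarrow k(D) \subseteq k(v)$. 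Consequently $\alpha_j$ becomes trivial in $k(v)/\wp(k(v))$, and $r_v(\pi^*\gamma)=0$.

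The main obstacle is controlling all divisorial valuations $v$ of $k(X)$ lying above the codimension-two locus of $B$, in particular those coming from exceptional divisors on a birational modification $\tilde X \to X$ sitting above singular points $P \in \Delta_i \cap \Delta_j$ with $i\in I_1$, $j\in I_2$. The cross-of-lines hypothesis in~\textit{a)} controls the local geometry of $\psi$ around (preimages of) such $P$, and is precisely what guarantees that $\beta$---hence $\gamma$---does not acquire any additional residue at exceptional divisorial valuations of $k(B)$ over these points, so that the stratification of $v$ by $v_B$ above is complete and the residue profile of $\gamma$ used in the crucial case remains accurate on every birational model.
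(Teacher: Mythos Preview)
Your overall strategy---pulling back $\beta$ along $\pi$ and verifying nontriviality via Witt/Amitsur together with hypothesis~\textit{b)}---matches the paper's, and your treatment of nontriviality is fine. The difficulty is in your unramifiedness argument, where there is a genuine gap.

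In your ``crucial case'' $v_B=v_{\Delta_j}$ with $j\in I_2$, you assert that $L_j\hookrightarrow k(v)$ because ``any prime divisor $D$ on a model of $X$ whose image in $B$ is $\Delta_j$ factors through $\pi^{-1}(\Delta_j)$.'' This is false. Take the normal form $ax^2+by^2+xz+z^2=0$ near $\Delta_j=\{b=0\}$: the degenerate fibre is the cross of lines meeting in the $k(\Delta_j)$-rational node $(0:1:0)$. Blowing up $X$ along the section $C=\{(0:1:0)\}\times\Delta_j$ produces an exceptional divisor $E$ which is a $\P^1$-bundle over $\Delta_j$, so $k(E)\cong k(\Delta_j)(t)$ does \emph{not} contain the nontrivial Artin--Schreier extension $L_j$. (A direct computation shows $e(v_E/v_{\Delta_j})=2$, so the class $2\cdot\alpha_j$ vanishes in $\Z/2$ anyway, but you neither compute this nor justify why such a parity statement should hold in general; and your invocation of ``tame functoriality'' for the formula $r_v=e\cdot\res$ is delicate precisely when $p\mid e$.) Your last paragraph likewise does not cover the case where the centre of $v$ on $B$ is a point of $\Delta^{(2)}\smallsetminus\Delta^{(1)}$, where $\gamma$ is genuinely ramified.

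The paper sidesteps all of this by stratifying according to the \emph{centre} $Z$ of $v$ on $B$ (rather than $v|_{k(B)}$) and then using \emph{both} representatives $\beta$ and $\gamma=\beta-\alpha$ of $\pi^*\beta$: if $Z\not\subset\Delta^{(1)}$ one uses $\beta$, which is unramified along every prime divisor of $B$ through $Z$, hence lies in $\Br(\OO_{B,Z})$ by Theorem~\ref{tResMerkUnr}; if $Z\not\subset\Delta^{(2)}$ one uses $\gamma$ analogously. This reduces everything to the single remaining case $Z\in\Delta^{(1)}\cap\Delta^{(2)}$, where the cross-of-lines hypothesis on $Y_P$ gives directly that the conic bundle representing $\pi^*\beta$ on $X'$ has split residue along $D$. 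Adopting this ``switch between $\beta$ and $\gamma$'' device would close your gap without any residue-field or ramification-index computations.
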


Note that, by the discussion following Example \ref{eHC}, the assumption \textit{a)} seems hard to replace by something more cohomological or syzygy-theoretic.

\begin{proof}
Let us start the proof with a preliminary remark.  By the work of
Cossart and Piltant~\cite{CP08}, \cite{CP09}, resolution of
singularities is known for quasiprojective threefolds in arbitrary
characteristic.  (According to \cite{Hi17}, resolution of
singularities should always hold.) Then a smooth projective model
$\wt{X}$ of $X$ always exists and $\Br_\ur(k(X))[2] =\Br(\wt{X})[2]$
holds by Theorem~\ref{tComparisonBrauer}.  Still, in all applications,
for example in Section~\ref{sExamplesNontrivialBrauerChar2}, we will
always exhibit such a resolution explicitly.

By a result of Witt~\cite{Witt35}, cf.\ \cite[Thm. 5.4.1]{GS06}, the
kernel of the natural homomorphism
\[
\pi^* \colon \Br(k(B)) \to \Br(k(X))
\]
is generated by the class of the conic bundle $X \to B$ itself. Denote
by $\alpha$ that class in $\Br(k(B))$. Denote by $\beta$ the class of
$\psi \colon Y \to B$ in $\Br(k(B))$. We claim that $\pi^* (\beta) \in
\Br(k(X))$ is nontrivial and unramified. It is nontrivial because
$\beta \neq \alpha$ by assumption \textit{b)}: $\alpha$ and $\beta$
have different residues along some irreducible component
$\Delta_{j_0}$ of $\Delta$. Now to check that $\pi^* (\beta)$ is
unramified, it suffices to check that for any valuation $v=v_D$
corresponding to a prime divisor $D$ on a model $X'\to X$ which is
smooth generically along $D$ we have that $\pi^* (\beta)$ is
unramified with respect to that valuation, in the sense that it is in
the image of $\Br(\OO_{X', D})$. Let $\Delta^{(1)}= \bigcup_{i\in I_1}
\Delta_i$, $\Delta^{(2)}= \bigcup_{j\in I_2} \Delta_j$. There are two
cases to distinguish:
\begin{enumerate}
\item[(i)]
The centre $Z$ of $v$ on $B$, in other words the image of $D$ on $B$, is not contained in $\Delta^{(1)} \cap \Delta^{(2)}$. In general, notice that the in general only partially defined residue map is defined for the classes $\beta$ and $\alpha$ with respect to any divisor $D'$ on the base $B$ by the assumption on the geometric generic fibers of $X \to B$ over discriminant components and by Theorem \ref{tResConicBundleAlongDiv}. Moreover, if the centre $Z$ is not contained in $\Delta^{(1)} \cap \Delta^{(2)}$, then $\beta$ or $\beta - \alpha$ has residue zero along every divisor $D'$ on $B$ passing through $Z$. By Theorem \ref{tResMerkUnr}, the class $\beta - \alpha$ comes from $\Br(\OO_{B, Z})$. But $\pi^* (\beta -\alpha) = \pi^* (\beta)$, and hence $\pi^* (\beta )$ comes from $\Br(\OO_{X', D})$ as desired. 
\item[(ii)]
The centre $Z$ of $v$ on $B$ is contained in $\Delta^{(1)} \cap \Delta^{(2)}$, hence a point $Z=P$ over which the fibre $Y_P$ is a cross of lines by the assumption in \textit{a)} of the Theorem. Then the class $\pi^* (\beta)$ is represented by a conic bundle on $X'$ that has a split Artin--Schreier double cover as Merkurjev residue over $D$ by assumption \textit{a)} of the Theorem. Hence the residue in that case is defined along $D$ and trivial, so $\pi^* (\beta)$ comes from $\Br(\OO_{X', D})$ as desired by Theorem \ref{tResMerkUnr} again.
\end{enumerate}
Thus $\pi^*(\beta) \in \Br_\ur(k(X))[2]$ is a nontrivial class.
\end{proof}

\section{Examples of conic bundles in characteristic two with nontrivial Brauer groups}\label{sExamplesNontrivialBrauerChar2}

\begin{definition}\label{dConicBundleBo}
Consider the following symmetric matrix defined over $\Z$
\[
S = \begin{pmatrix}2 u v+4 v^{2}+2 u w+2 w^{2}&
      u^{2}+u w+w^{2}&
      u v\\
      u^{2}+u w+w^{2}&
      2 u^{2}+2 v w+2 w^{2}&
      u^{2}+v w+w^{2}\\
      u v&
      u^{2}+v w+w^{2}&
      2 v^{2}+2 u w+2 w^{2}\\
\end{pmatrix}.
\]
The bihomogeneous polynomial
\[
	(x,y,z)S(x,y,z)^t
\]
is divisible by $2$. Let $X \subset \P_{\Z}^2 \times \P_{\Z}^2$ be the conic bundle defined by
\[
	\frac{1}{2}(x,y,z)S(x,y,z)^t = 0.
\]
Here we denote by $(u:v:w)$ the coordinates of the first (base) $\P_{\Z}^2$ and by $(x:y:z)$
the coordinates of the second (fiber) $\P_{\Z}^2$.

\medskip

\noindent The determinant of $S$ is divisible by $2$ so 
\[
D=\frac{1}{2}\det S
\]
is still a polynomial over $\Z$.  Its vanishing defines the discriminant $\Delta$ of $X$ in the sense of Definition \ref{dDiscriminants}. We denote by $X_{(p)}$ the conic bundle over $\ol{\F}_p$ defined by reducing the defining equation of $X$ modulo $p$. It has discriminant $\Delta_{(p)}$ defined by the reduction of $D$ modulo $p$.
\end{definition}

This example was found using the computer algebra system {\tt Macaulay2} \cite{M2} and Jakob Kr\"okers Macaulay2 packages {\tt FiniteFieldExperiments} and {\tt BlackBoxIdeals} \cite{Kr15}.

Our aim is to prove the following result whose proof will take up the remainder of this Section.

\begin{theorem}\label{tInteresting}
The conic bundle $X \to \P^2$ has smooth total space that is not
stably rational over $\C$. More precisely, $X$ has the following
properties:
\begin{enumerate}
\item\label{iIrred}
The discriminant $\Delta_{(p)}$ is irreducible for $p \neq 2$, hence
$$\Br_\ur\left(\ol{\F}_p \left(X_{(p)}\right) \right)= 0\quad \text{for}\; p\neq 2.$$
\item\label{iBrauer2}
The conic bundle $X_{(2)}$ satisfies the hypotheses of Theorem \ref{tBrauerNontrivial}, hence $$\Br_\ur\left(\ol{\F}_2 \left(X_{(2)}\right) \right)[2]\neq 0.$$
\item\label{iChowZero}
There is a $\mathrm{CH}_0$-universally trivial resolution of singularities $\sigma\colon \wt{X}_{(2)}\to X_{(2)}$.
\end{enumerate}
\end{theorem}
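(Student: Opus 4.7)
The plan is to establish parts (i), (ii), (iii) in order and then extract the non-stable-rationality statement from them by a degeneration argument. As a preliminary, I would verify that the total space $X \subset \P^2_\Z \times \P^2_\Z$ is smooth over $\Z$ (in particular over $\Q$ and over every $\F_p$, including $p=2$) by a direct Jacobian check on the given bihomogeneous equation; this is a finite computer algebra computation and not an obstacle.

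For (i), the explicit polynomial $D = \tfrac{1}{2} \det S \in \Z[u,v,w]$ can be written down and its reductions $D \bmod p$ analyzed. I would first verify that $D$ is absolutely irreducible over $\Q$ (so that all but finitely many reductions remain irreducible), and then check the finitely many remaining odd primes by hand or with \texttt{Macaulay2}. Once $\Delta_{(p)}$ is irreducible for every $p \neq 2$, the classical formula for the unramified Brauer group of a conic bundle over a rational surface in characteristic $\neq 2$ (the exact sequence \eqref{fBrauer} together with the fact that a nontrivial element of $\Br_\ur$ would require a nontrivial residue profile on at least two components of $\Delta$) gives $\Br_\ur(\ol{\F}_p(X_{(p)})) = 0$.

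For (ii), I would reduce $S$ modulo $2$, factor $D_{(2)}$ into its irreducible components $\Delta_i^{(2)}$ and, for each one, check using the normal form from Lemma \ref{lNormalFormHC} that the generic fiber of $X_{(2)}$ over $\Delta_i^{(2)}$ is a cross of two distinct lines, so that $X_{(2)}$ is tamely ramified over each $\Delta_i^{(2)}$ in the sense of Theorem \ref{tResConicBundleAlongDiv}\textit{b)} and each $\Delta_i^{(2)}$ appears with multiplicity one. The key remaining task is to exhibit a partition $I = I_1 \sqcup I_2$ and an auxiliary conic bundle $\psi \colon Y \to \P^2_{\ol{\F}_2}$ (or on a small modification) whose residue profile matches $(\alpha_i)_{i \in I_1}$ on $I_1$, vanishes on $I_2$, and whose fiber is a cross of lines over every point lying over $\Delta^{(1)} \cap \Delta^{(2)}$, along with two components carrying nontrivial residues. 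Guided by the shape of Example \ref{eHC}, I would write $\psi$ down as an explicit quadratic form $a x^2 + a xz + b y^2 + byz + cz^2 = 0$ where $a, b$ are chosen to be (powers of) the equations of two of the discriminant components selected for $I_1$, and $c$ is a suitable linear form; the residue computation via Proposition \ref{pCupProduct} is then a direct calculation. Once $\psi$ is in hand, Theorem \ref{tBrauerNontrivial} applies and yields $\Br_\ur(\ol{\F}_2(X_{(2)}))[2] \neq 0$.

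For (iii), I would identify the singular locus of $X_{(2)}$, which, since the generic fiber and the fibers over the reduced tame components are handled by Theorem \ref{tResConicBundleAlongDiv}, is concentrated over a finite set of special points of $\Delta_{(2)}$ (nodes of $\Delta_{(2)}$ and analogous loci); at each such point I would perform an explicit sequence of blowups (possibly combined with normalization) and show that each exceptional fiber of $\sigma \colon \wt{X}_{(2)} \to X_{(2)}$ is a union of rational varieties, hence $\CH_0$-universally trivial, so that by the fiberwise criterion of Colliot-Th\'el\`ene--Pirutka the whole resolution is $\CH_0$-universally trivial. This step is the main obstacle, because in characteristic $2$ the local analytic normal forms of the discriminant are not the familiar ones (cf.\ Theorem \ref{tDiscriminantProfilesChar2}), so I must compute resolutions by hand in coordinates rather than appealing to standard models. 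Once (i), (ii), (iii) are established, the non-stable-rationality of $X_\C$ follows by the standard degeneration method: by \cite{ABBB18}, $2$-torsion in the unramified Brauer group obstructs universal $\CH_0$-triviality in characteristic $2$, so $\wt{X}_{(2)}$ is not universally $\CH_0$-trivial; combined with the $\CH_0$-universally trivial resolution (iii), specialization from the generic fiber then shows that $X_\C$ cannot be stably rational.
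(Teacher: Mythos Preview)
Your overall plan follows the paper's, but there is one factual error and two places where the paper's execution differs from yours in ways worth knowing.

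\textbf{The smoothness claim.} You propose to verify that $X$ is smooth over $\Z$, in particular over $\F_2$. This is false: $X_{(2)}$ is genuinely singular (over the intersection points of the elliptic curve component of $\Delta_{(2)}$ with the three lines), which is precisely why part~(\ref{iChowZero}) is needed. The paper only asserts, and only checks, smoothness over $\Q$.

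\textbf{Part (\ref{iIrred}).} Your plan is to prove irreducibility over $\Q$ and then handle the finitely many bad primes individually. The paper instead gives a uniform argument that works simultaneously for all odd $p$: it shows (Lemma~\ref{lFivePoints}) that a reducible plane curve of degree at least $3$ must have singular subscheme of length $\ge 2$, and then (Lemma~\ref{lAtMostOne}, Proposition~\ref{pIrredDelta}) computes that the saturated Jacobian ideal of $D$ contains two independent linear forms modulo every prime $p\nmid 2^{10}$, so $\Delta_{(p)}$ has at most one singular point and is therefore irreducible. Your approach would require first locating all primes of bad reduction of the sextic, which is an extra and less transparent computation.

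\textbf{Part (\ref{iBrauer2}).} The discriminant mod $2$ factors as three concurrent lines together with a smooth cubic $E$; the paper takes $I_1$ to be the three lines and $I_2=\{E\}$, and uses the conic bundle of Example~\ref{eHC} verbatim as the auxiliary $\psi$. The point you underplay is the verification that the residues of $X_{(2)}$ along the three lines \emph{agree} with those of $\psi$. The paper does not compute these Artin--Schreier classes directly via Proposition~\ref{pCupProduct}; instead it proves a structural result (Proposition~\ref{pArtinSchreierCoversExample}): if a conic bundle defined over $\F_2$ induces on a line $L$ an Artin--Schreier cover branched at a single reduced $\F_2$-point, then up to birational isomorphism over $L$ that cover is either trivial or the unique nontrivial one $x^2+x+v/u=0$. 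Nontriviality of the covers for both $X_{(2)}$ and $\psi$ is then checked by a Frobenius argument on fibres over $\F_2$-rational points (Lemma~\ref{lCoverHC}, Proposition~\ref{pFibersExample}). This sidesteps any direct residue computation.

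\textbf{Part (\ref{iChowZero}).} Your outline matches the paper's. Concretely, $X_{(2)}$ is smooth over the smooth locus of $\Delta_{(2)}$ and over the triple point of the three lines; the only singularities lie over the intersection points of the lines with $E$. The paper establishes a local normal form $x^2+xy+\alpha y^2+\beta u(u+v^n)=0$ with $\alpha,\beta$ units (Lemma~\ref{lNormalFormConicExample}), and shows that $n$ successive point blow-ups resolve it, each exceptional divisor being a rank-$\ge 3$ quadric surface (Propositions~\ref{pBlowUpScheme}, \ref{pExceptionalDivisors}); here $n=1$ at the six transverse intersections and $n=3$ at the inflectional tangency $(0{:}0{:}1)$.
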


Notice that the degeneration method of \cite{CT-P16} (and \cite{Voi15} initially, see also \cite{To16}) shows that \ref{iBrauer2}) and \ref{iChowZero}) imply that $X$ is not stably rational over $\C$: indeed, by Theorem \ref{tComparisonBrauer}, we have $\Br(\wt{X}_{(2)}) = \Br_\ur\left(\ol{\F}_2 \left(X_{(2)}\right) \right) \neq 0$, because of \ref{iBrauer2}); then \cite[Theorem 1.1]{ABBB18} yields that $\wt{X}_{(2)}$ is not $\mathrm{CH}_0$-universally trivial. Finally, \cite[Thm. 1.14]{CT-P16} implies that $X$ is not retract rational, in particular not stably rational, over $\ol{\Q}$, which is equivalent to saying it is not stably rational over any algebraically closed field of characteristic $0$, see \cite[Prop. 3.33]{KSC04}.

\medskip

Moreover, item \ref{iIrred}) shows that the degeneration method, using reduction modulo $p\neq 2$ and the unramified Brauer group, cannot yield this result. This follows from work of Colliot-Th\'{e}l\`{e}ne, see \cite[Thm 3.13, Rem. 3.14]{Pi16}; note that one only has to assume $X$ is a threefold which is nonsingular in codimension $1$ in \cite[Thm. 3.13]{Pi16}. Likewise, usage of differential forms as in \cite{A-O16}, see in particular their Theorem 1.1 and Corollary 1.2, does not imply the result either.

\subsection{Irreducibility of $\Delta_{(p)}$ for $p\neq 2$}\label{ssDiscNot2}
When we speak about irreducibility or reducibility in the following, we always mean geometric irreducibility or reducibility. 
Our first aim is to prove that $\Delta_{(p)}$ is irreducible for $p \not= 2$. This is easy for generic $p$ since $X$ is smooth over
$\Q$ (by a straight-forward Gr\"obner basis computation  \cite{ABBBM2}). Since being singular is a codimension $1$ condition, we expect that $\Delta_{(p)}$ is singular for a finite number of primes. So we need a more refined argument to prove irreducibility. Our idea is to prove that there is (counted with multiplicity) at most one singular point for each $p \not=2$. 

\begin{lemma}\label{lFivePoints}
Let $C$ be a reduced and reducible plane curve of degree at least $3$ over an algebraically closed field. Then the length of the singular subscheme, defined by the Jacobi ideal on the curve, is at least $2$.
\end{lemma}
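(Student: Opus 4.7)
The plan is to decompose $C$ into two subcurves that share no common component, use B\'ezout to bound from below the length of their scheme-theoretic intersection, and observe that this intersection sits inside the singular subscheme.

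First I would use that $C$ is reducible and reduced to fix any irreducible component $C_1$ with defining polynomial $f$, and let $C_2$ be the union of the remaining components, defined by $g$; then $\gcd(f,g)=1$ and the scheme-theoretic intersection $C_1 \cap C_2 = V(f,g) \subset \P^2$ is zero-dimensional of length $\deg(C_1)\cdot\deg(C_2)$ by B\'ezout. Since $\deg(C_1)+\deg(C_2)=\deg(C)\geq 3$ and each factor is at least~$1$, one of them is at least~$2$, so $\mathrm{length}(C_1 \cap C_2)\geq 2$.

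Next I would exhibit a closed immersion $C_1 \cap C_2 \hookrightarrow \mathrm{Sing}(C)$. The curve $C$ is cut out by $fg$ and, in any affine chart, the Jacobi ideal defining the singular subscheme is
\[
I_{\mathrm{sing}} = (fg,\; f_x g + f g_x,\; f_y g + f g_y).
\]
Each generator lies in $(f,g)$, so $I_{\mathrm{sing}} \subseteq (f,g)$, yielding the desired closed immersion. Length is monotone under closed immersions of zero-dimensional schemes, so $\mathrm{length}(\mathrm{Sing}(C)) \geq \mathrm{length}(C_1 \cap C_2) \geq 2$.

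The argument is essentially forced by the setup and I do not anticipate a serious obstacle. The only mildly delicate point is that $\mathrm{Sing}(C)$ is indeed zero-dimensional, which is automatic because $C$ is a reduced curve, and that the extra generator $fg$ does not disturb the containment $I_{\mathrm{sing}} \subseteq (f,g)$, which it plainly does not.
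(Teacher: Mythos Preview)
Your argument is correct and is a genuinely different, cleaner route than the paper's. The paper proceeds by local analysis: it shows that a singular point of length~$1$ forces the partials to have linearly independent leading linear terms, hence the point is an ordinary node; it then observes that a reducible plane curve whose entire singular locus is a single node must be a pair of lines, contradicting $\deg C \geq 3$. Your approach bypasses the local classification entirely by splitting $C = C_1 \cup C_2$, invoking B\'ezout to get $\mathrm{length}(C_1 \cap C_2) = \deg C_1 \cdot \deg C_2 \geq 2$, and then noting the elementary ideal containment $I_{\mathrm{sing}} \subseteq (f,g)$ to conclude. This is more direct and works uniformly in all characteristics without any case analysis; the paper's argument, by contrast, yields the slightly finer information that the extremal case (total singular length~$1$) is precisely two lines meeting transversely.
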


\begin{proof}
The only singularities of length $1$ are those where \'{e}tale locally two smooth branches of the curve cross transversely: if $f(x,y)=0$ is a local equation for $C$ with isolated singular point at the origin, then the length can only be $1$ if 
\[
\frac{\partial f}{\partial x}, \quad \frac{\partial f}{\partial y}
\]
have leading terms consisting of linearly independent linear
forms. This means two smooth branches cross transversely. The only
reducible curve that has only one transverse intersection is the union
of two lines.
\end{proof}

We also need the following technical lemma.

\begin{lemma} \label{lAtMostOne}
Let $I \subset \Z[u,v,w]$ be a homogeneous ideal, $B = \{l_1,\dots,l_k\}$ a $\Z$-Basis of the space of linear forms $I_1 \subset I$,
and $M$ the $k \times 3$ matrix of coefficients of the $l_i$. Let $g$ be the minimal generator of the ideal of $2 \times 2$ minors of $M$ in $\Z$.

If a prime $p$ does not divide $g$, then $I$ defines a finite scheme of degree at most $1$ in characteristic $p$.
\end{lemma}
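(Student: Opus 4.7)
The proof is a straightforward linear algebra argument after reduction modulo $p$. First, since $\Z$ is a PID, the ideal of $2\times 2$ minors of $M$ is principal, so speaking of its ``minimal generator'' $g$ (well-defined up to sign) is unambiguous. The assumption $p\nmid g$ is then equivalent to saying that the ideal generated in $\F_p$ by the $2\times 2$ minors of the reduction $\overline{M}:=M\bmod p$ is the unit ideal, equivalently, at least one $2\times 2$ minor of $\overline{M}$ is nonzero, equivalently, $\mathrm{rank}_{\F_p}(\overline{M})\geq 2$.

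Next I would translate this rank condition back to the linear forms. The rows of $\overline{M}$ are the coefficient vectors of $\overline{l_1},\dots,\overline{l_k}\in \F_p[u,v,w]_1$, so $\mathrm{rank}_{\F_p}(\overline{M})\geq 2$ means that among these reductions there are two $\F_p$-linearly independent linear forms $\ell_1,\ell_2 \in I\otimes_{\Z}\F_p$. Two linearly independent linear forms in three variables cut out a single reduced point of $\P^2_{\F_p}$, so $V(\ell_1,\ell_2)$ is a finite scheme of degree $1$.

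Finally, the closed subscheme of $\P^2_{\F_p}$ defined by $I\otimes_{\Z}\F_p$ is contained in $V(\ell_1,\ell_2)$, so it is either empty or equal to this single reduced point; in either case it is finite of degree at most $1$, which is exactly what the lemma claims.

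I do not foresee any real obstacle here; the only mild care is at the outset, in noting that because $\Z$ is a PID, ``the'' minimal generator $g$ of the minor ideal is well defined, and in then reducing the statement ``$p\nmid g$'' to the concrete rank inequality $\mathrm{rank}_{\F_p}(\overline{M})\geq 2$. Everything after that is an elementary observation about zero loci of linear forms in $\P^2$.
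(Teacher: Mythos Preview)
Your proposal is correct and follows essentially the same argument as the paper: from $p\nmid g$ deduce that some $2\times 2$ minor is nonzero mod $p$, hence $\overline{M}$ has rank at least $2$, hence two independent linear forms survive in $I\otimes\F_p$, hence the zero locus in $\P^2_{\F_p}$ is either empty or a single reduced point. The paper's proof is just a terser version of exactly this reasoning.
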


\begin{proof}
If $p$ does not divide $g$ there is at least one $2 \times 2$ minor $m$ with $p\nmid m$. Therefore in characteristic $p$ this minor is invertible and the matrix has rank at least $2$. It follows that $I$ contains at least $2$ independent linear forms in characteristic $p$ and therefore the vanishing set is either empty or contains $1$ reduced point. 
\end{proof}

\begin{remark}
Notice that the condition $p \nmid g$ is sufficient, but not necessary. For example the ideal $(u^2,v^2,w^2)$ vanishes nowhere,
but still has $g=0$ and therefore $p | g$. The condition becomes necessary if $I$ is saturated. 
\end{remark}

\begin{proposition}\label{pIrredDelta}
For $p \not= 2$, $\Delta_{(p)}$ is an irreducible sextic curve.
\end{proposition}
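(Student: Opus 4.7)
The plan is to control the singular subscheme of $\Delta_{(p)}$ and then invoke Lemma~\ref{lFivePoints} contrapositively. Degree six is immediate: every entry of $S$ is a quadratic form in $u,v,w$, so $\det S$ has degree $6$, and dividing by $2$ preserves this; one only checks that the top-degree part of $D$ does not vanish mod $p$, which is a direct computation on the single polynomial $D$.

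For irreducibility, suppose to the contrary that $\Delta_{(p)}$ is reducible. I first observe that $\Delta_{(p)}$ must then be reduced, for if it contained a repeated component $C$ of positive dimension, then $D$ together with all three partials $\partial D / \partial u$, $\partial D / \partial v$, $\partial D / \partial w$ would vanish along $C$, giving a singular subscheme of infinite length. So it suffices to rule out the reduced reducible case; since $\deg \Delta_{(p)} = 6 \geq 3$, Lemma~\ref{lFivePoints} then guarantees that the singular subscheme has length at least $2$, and the task reduces to bounding this length by $1$ for every $p \neq 2$.

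To do this, let $J \subset \Z[u,v,w]$ be the ideal generated by $D$ and the three partial derivatives of $D$ (one keeps $D$ explicitly to cover $p=3$, where Euler's identity degenerates). A Gr\"obner basis computation over $\Z$, analogous to the one in \cite{ABBBM2}, extracts a $\Z$-basis $l_1, \dots, l_k$ of the degree-one piece of the saturation of $J$ with respect to the irrelevant ideal, yielding a $k \times 3$ coefficient matrix $M$. By Lemma~\ref{lAtMostOne}, whenever $p$ does not divide the generator $g \in \Z$ of the ideal of $2 \times 2$ minors of $M$, the ideal $J \bmod p$ cuts out at most a single reduced point of $\P^2_{\ol{\F}_p}$. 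It therefore suffices to check that $g$ is a power of $2$.

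The main obstacle is precisely this last verification: a finite explicit check on the matrix $S$ from Definition~\ref{dConicBundleBo}, which the specific matrix $S$ was arranged to satisfy by the random search described in \cite{Kr15}. Conceptually nothing else can go wrong: a smooth plane curve in $\P^2$ is irreducible by B\'ezout (disjoint smooth components would have to meet), a length-at-most-$1$ singular subscheme forces the sextic to be reduced as above, and Lemma~\ref{lFivePoints} then rules out the reduced reducible case, completing the proof.
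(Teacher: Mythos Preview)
Your proposal is correct and follows essentially the same route as the paper: apply Lemma~\ref{lAtMostOne} to the saturation of the Jacobian ideal $(D,\partial_u D,\partial_v D,\partial_w D)$ over $\Z$, reduce to checking that the generator $g$ of the $2\times 2$-minor ideal is a power of $2$, and then conclude via Lemma~\ref{lFivePoints}. The paper's proof is terser and records the explicit outcome of the Macaulay2 computation, namely $g=2^{10}$, while you spell out the logical structure (reducedness, degree) more carefully; but the argument is the same.
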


\begin{proof}
We apply Lemma \ref{lAtMostOne} to the saturation of
$(D,\frac{dD}{du},\frac{dD}{dv},\frac{dD}{dw}) \subset \Z[u,v,w]$.  A
Macaulay2 computation gives $g=2^{10}$ \cite{ABBBM2}. So we have at
most one singular point over $p \not= 2$ and therefore $\Delta_{(p)}$
is irreducible.
\end{proof}

\subsection{The unramified Brauer group of $X_{(2)}$ is nontrivial}\label{ssBrauer Nontrivial}

Let us now turn to characteristic $p=2$.

\begin{proposition}\label{pDiscDescription}
We have
\[
	D \equiv uw(u+w)(\gamma u + v^3) \mod 2.
\]
with $\gamma = v^2+uv+vw+w^2$. Furthermore
\begin{itemize}
\item $\gamma$ does not vanish at $(0:0:1)$.
\item $\gamma u + v^3 =0$ defines a smooth elliptic curve $E \subset \P_{\ol{\F}_2}^2$.
\item $E$ does not contain the intersection point $(0:1:0)$ of the three lines.
\item The intersection of $E$ with each of the lines $w=0$ and $u+w=0$ is transverse.
\item The line $u=0$ is an inflectional tangent to $E$ at the point $(0:0:1)$. 
\end{itemize}
\end{proposition}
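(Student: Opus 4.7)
The plan is to verify each assertion by an explicit polynomial computation, first on the sextic $D = \frac{1}{2}\det S$ from Definition~\ref{dConicBundleBo}, and then on the cubic
\[
f := \gamma u + v^3 = uv^2 + u^2v + uvw + uw^2 + v^3 \in \F_2[u,v,w]
\]
cutting out $E$. For the congruence $D \equiv uw(u+w)(\gamma u + v^3) \pmod{2}$, I would expand $\frac{1}{2}\det S$ directly from the entries of $S$, reduce modulo~$2$, and verify the displayed product term by term. This is a finite polynomial identity in $\F_2[u,v,w]$ and a short Macaulay2 computation in the style of~\cite{ABBBM2} suffices.

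For the geometric claims about $E$, smoothness would come from computing the partial derivatives of $f$ in characteristic~$2$, namely
\[
f_u = v^2 + vw + w^2, \qquad f_v = u^2 + uw + v^2, \qquad f_w = uv,
\]
and running a short case analysis: $f_w = 0$ forces $u=0$ or $v=0$, and in either subcase the remaining equations $f_u = f_v = 0$ plus $f=0$ are easily seen to have no common zero in $\P^2_{\ol{\F}_2}$. A smooth plane cubic is automatically of genus one, so $E$ is elliptic. The non-vanishing of $\gamma$ at $(0{:}0{:}1)$ (it evaluates to $w^2 = 1$) and of $f$ at $(0{:}1{:}0)$ (it evaluates to $v^3 = 1$) are immediate by inspection.

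For the intersection statements, I would substitute the three linear equations into $f$. The substitutions yield $f|_{w=0} = v(u^2+uv+v^2)$ and $f|_{u+w=0} = u^3 + uv^2 + v^3$ (using $w=u$ in characteristic~$2$); I would then check that the dehomogenizations $t^2+t+1$ and $t^3+t+1$ in $\F_2[t]$ are separable (in fact irreducible over $\F_2$), so each of the two lines meets $E$ in three distinct geometric points. By B\'ezout's theorem, three distinct intersection points of a line and a cubic force multiplicity one at each, giving transversality. Finally, $f|_{u=0} = v^3$, so $E \cap \{u=0\} = \{(0{:}0{:}1)\}$ with multiplicity~$3$; and since the gradient of $f$ at $(0{:}0{:}1)$ is $(f_u,f_v,f_w)(0,0,1) = (1,0,0)$, the tangent line to $E$ at that point is exactly $\{u=0\}$, making it an inflectional tangent.

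The main obstacle is purely clerical: one must track characteristic-$2$ cancellations carefully in the partial derivatives and substitutions. No conceptual difficulty is anticipated, and every step reduces to a finite verification in $\F_2[u,v,w]$ that can be done by hand or automated in Macaulay2.
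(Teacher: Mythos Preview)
Your proposal is correct and follows the same direct-computation approach as the paper, which simply states that everything is a straightforward calculation and refers to the accompanying Macaulay2 scripts~\cite{ABBBM2}. In fact you supply considerably more detail than the paper does: your explicit partial derivatives of $f$, the case split on $f_w=uv$, and the separability checks on $t^2+t+1$ and $t^3+t+1$ are all accurate in characteristic~$2$ and fully justify the geometric claims about~$E$.
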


\begin{proof}
All of this is a straight forward computation. See \cite{ABBBM2}.
\end{proof}

The next lemma gives us a criterion for the irreducibility of the Artin--Schreier double covers induced
on the discriminant components and hence for the nontriviality of the residues of the conic bundle along these components.

\begin{lemma}\label{lCoverHC}
Let $\pi\colon \ol{X} \to \P^2$ be a conic bundle defined over $\F_2$. Let
$C \subset \P^2$ be an irreducible curve  over $\F_2$, over which
the fibers of $\ol{X}$ generically consist of two distinct lines.  Let
$\wt{C}\to C$ be the natural double cover of $C$ induced by
$\pi$. Then $\wt{C}$ is irreducible if the following hold: 
\begin{itemize}
\item There exists an $\F_2$-rational point $p_1 \in C$ such that the fiber of
$\ol{X}$ over $p_1$ splits into two lines defined over $\F_2$.
\item There exists an $\F_2$-rational point $p_2 \in C$ such that the fiber of
$\ol{X}$ over  $p_2$ is irreducible over $\F_2$ but splits into two lines over $\ol{\F}_2$.
\end{itemize}

\end{lemma}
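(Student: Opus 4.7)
The plan is an argument by contradiction based on analyzing fibers of $\wt{C}\to C$ over the two auxiliary points. The starting point is that, by construction (compare Remark~\ref{rArtinSchreier} and Theorem~\ref{tResConicBundleAlongDiv}), the double cover $\wt{C}\to C$ is the $\F_2$-scheme whose fiber over a point $p\in C$ in the locus where $\bar X_p$ is a pair of distinct lines is the scheme parametrizing those two lines; over the whole such locus, the map $\wt C\to C$ is étale of degree $2$.

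Suppose, for contradiction, that $\wt{C}$ is reducible (geometrically). Since $C$ is geometrically irreducible and $\wt{C}\to C$ is finite of degree $2$, there are only two ways this can happen, and both come from the Hochschild--Serre exact sequence $H^1(\Gal(\ol{\F}_2/\F_2),\Z/2)\hookrightarrow H^1_{\et}(C,\Z/2)\to H^1_{\et}(C\otimes\ol{\F}_2,\Z/2)$:

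\textbf{Case A.} $\wt{C}$ is already reducible over $\F_2$, so the associated Artin--Schreier class in $\F_2(C)/\wp(\F_2(C))$ is trivial and $\wt{C}\cong C\sqcup C$ as $\F_2$-schemes. Then over every $\F_2$-rational point of $C$ in the étale locus, the fiber of $\wt{C}$ consists of two $\F_2$-rational points. I apply this to $p_2$: its fiber is in the étale locus since $\bar X_{p_2}$ is two distinct lines, so its fiber in $\wt{C}$ would have to be two $\F_2$-rational points. But by the construction of $\wt{C}$, this fiber parametrizes the two lines of $\bar X_{p_2}$, which are conjugate under $\Gal(\F_4/\F_2)$; hence the fiber is the single closed point $\Spec\F_4$ of degree $2$. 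Contradiction.

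\textbf{Case B.} $\wt{C}$ is irreducible over $\F_2$ but becomes reducible over $\ol{\F}_2$. Then the class of $\wt{C}\to C$ in $H^1_{\et}(C,\Z/2)$ lies in the image of the arithmetic part $H^1(\Gal(\ol{\F}_2/\F_2),\Z/2)=\F_2/\wp(\F_2)$, so over the étale locus the cover is isomorphic to the pullback of $\Spec\F_4\to\Spec\F_2$. In particular, over every $\F_2$-rational point of $C$ in the étale locus the fiber is $\Spec\F_4$. Applying this to $p_1$, whose fiber in $\wt{C}$ parametrizes the two lines of $\bar X_{p_1}$—both of which are defined over $\F_2$, so the fiber ought to be $\Spec\F_2\sqcup\Spec\F_2$—yields a contradiction.

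Hence $\wt{C}$ is geometrically irreducible. The main thing to be careful about is the identification of the scheme-theoretic fibers of $\wt{C}\to C$ at $p_1,p_2$ with the schemes parametrizing the components of $\bar X_{p_1},\bar X_{p_2}$, but this is built into the definition of the natural double cover (being étale exactly where the conic fiber is a reduced cross of lines, and Galois-equivariantly indexed by those lines).
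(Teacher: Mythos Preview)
Your proof is correct and follows essentially the same approach as the paper: a contradiction argument splitting into the two cases where the geometrically reducible $\wt{C}$ is already split over $\F_2$ (ruled out by $p_2$) or has its two components interchanged by Frobenius (ruled out by $p_1$). The paper phrases this more tersely in terms of the Frobenius action on the components of $\wt{C}$ and hence on the two lines in each fiber, whereas you frame it via scheme-theoretic fibers and the Hochschild--Serre sequence, but the underlying logic is identical.
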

\begin{proof}
Under the assumptions the double cover $\wt{C} \to C$ is defined
over $\F_2$. Suppose, by contradiction, that $\wt{C}$ were
(geometrically) reducible. Then the Frobenius morphism $F$ would
either fix each irreducible component of $\wt{C}$ as a set, or
interchange the two irreducible components. But since $C$ is defined
over $\F_2$, this would mean that $F$ either fixes each of the two
lines as a set in every fiber over a $\F_2$-rational point of the base,
or $F$ interchanges the two lines in every fiber over a $\F_2$-rational
point. This contradicts the existence of $p_1, p_2$.
\end{proof}

\begin{proposition}\label{pFibersExample}
We consider the fibers of $X_{(2)}$ over the $\F_2$-rational points of the base $\P_{\ol{\F}_2}^2$
and obtain the following table:

\begin{center}
\begin{tabular}{|c|c|c|c|c|c|c|}
\hline
point & fiber & $u$ & $w$ & $u+w$ & $\gamma u +v^3$ \\
\hline
$(0:1:0)$ & $1$ double line 		& $\times$ & $\times$ & $\times$ &  \\
$(0:1:1)$ & $2$ rational lines		& $\times$ &    &    &\\
$(1:0:0)$ & $2$ rational lines		&    & $\times$ &    &$\times$ \\
$(1:0:1)$ & $2$ rational lines		&    &    & $\times$ &\\
$(0:0:1)$ & $2$ conjugate lines 	& $\times$ &    &    & $\times$\\
$(1:1:0)$ & $2$ conjugate lines	&    & $\times$ &    &\\
$(1:1:1)$ & $2$ conjugate lines	&    &    & $\times$ &\\
\hline
\end{tabular}
\end{center}

\noindent
Here if a $\F_2$-rational point lies on a particular component of the
discriminant, we put an '$\times$' in the corresponding row and
column.
\end{proposition}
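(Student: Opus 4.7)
The plan is to carry out a direct, point-by-point computation. First I would write down the defining equation of $X_{(2)}$ explicitly: dividing $(x,y,z)S(x,y,z)^t$ by $2$ and reducing mod~$2$ gives
\[
q_{(u:v:w)}(x,y,z) = (uv+uw+w^2)x^2 + (u^2+vw+w^2)y^2 + (v^2+uw+w^2)z^2
\]
\[
\quad + (u^2+uw+w^2)xy + uv\cdot xz + (u^2+vw+w^2)yz,
\]
where the off-diagonal entries come from the $S_{ij}$ with $i\neq j$ (unchanged by division) and the diagonal from $S_{ii}/2$ reduced mod~$2$. With this formula in hand, the problem reduces to a finite check: substitute each of the seven $\F_2$-points of $\P^2$ into $q$ and classify the resulting ternary conic via Corollary~\ref{cConics}.

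Next, for each point I would compute $\dim r(b_q)$ and $\dim r(q)$ and read off the geometric type from the table of Corollary~\ref{cConics}. The point $(0{:}1{:}0)$ yields $q=z^2$, immediately a double line. The three points $(0{:}1{:}1)$, $(1{:}0{:}0)$, $(1{:}0{:}1)$ yield conics that factor visibly over $\F_2$ as $x(x+y)$, $y(x+y+z)$, and $xy$ respectively, hence crosses of two $\F_2$-rational lines. The three remaining points $(0{:}0{:}1)$, $(1{:}1{:}0)$, $(1{:}1{:}1)$ all produce a conic whose bilinear form has a $1$-dimensional radical $R$ satisfying $q|_R=0$, so the form is equivalent up to similarity to one of the normal forms $ax^2+xz+z^2$ or $xz$ from Corollary~\ref{cConics}.

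The substantive step is distinguishing the $\F_2$-split cross from the $\F_4$-conjugate cross in the last three cases. For each of these three conics I would perform a change of variables (replacing $z$ by $z+$ (radical direction) and rescaling to kill the coefficient of $z^2$ if necessary) to bring $q$ into the shape $a x^2 + xz + z^2$ over $\F_2$; the outcome in each of the three cases is $a=1$. Since $\wp(\F_2)=\{0\}$, the class $a=1 \in \F_2/\wp(\F_2)$ is nontrivial, so the associated Artin--Schreier extension is $\F_4/\F_2$ and the conic splits only over $\F_4$. This confirms the \emph{two conjugate lines} label in the last three rows; Lemma~\ref{lCoverHC} will later be invoked with $p_1\in\{(0{:}1{:}1),(1{:}0{:}0),(1{:}0{:}1)\}$ and $p_2\in\{(0{:}0{:}1),(1{:}1{:}0),(1{:}1{:}1)\}$, so recording this distinction is essential.

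Finally, the last four columns of the table (membership in the discriminant components $u$, $w$, $u+w$, $\gamma u+v^3$) are a trivial substitution: for each $\F_2$-point one plugs the coordinates into the four linear/cubic forms and records a~$\times$ exactly when the value is zero. The only mild subtlety is evaluating $\gamma u+v^3$ with $\gamma = v^2+uv+vw+w^2$ at each point, yielding zero precisely at $(1{:}0{:}0)$ and $(0{:}0{:}1)$, consistent with the table. No single step is difficult; the main obstacle is purely bookkeeping the seven case computations carefully, and the only conceptual point is the Artin--Schreier identification used to separate the $\F_2$-rational from the $\F_4$-conjugate cross of lines. The explicit verification is recorded in the \texttt{Macaulay2} file \cite{ABBBM2}.
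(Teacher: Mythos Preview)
Your proposal is correct and follows essentially the same approach as the paper: both reduce the claim to a direct case-by-case computation over the seven $\F_2$-points, with the paper deferring entirely to the Macaulay2 script \cite{ABBBM2} while you spell out the equation of $X_{(2)}$, the factorizations, and the Artin--Schreier test by hand before also citing \cite{ABBBM2}. Your explicit derivation of $q_{(u:v:w)}$ and the distinction between split and conjugate crosses via the class of $a=1$ in $\F_2/\wp(\F_2)$ is accurate and is exactly what the machine check encodes.
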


\begin{proof}
All of this is again a straight forward computation. See  \cite{ABBBM2}.
\end{proof}

\begin{corollary}\label{cCoversExample}
The conic bundle $X_{(2)}$ induces a nontrivial Artin--Schreier double cover on each component of the discriminant locus. In particular, condition \textit{b)} of Theorem \ref{tBrauerNontrivial} is satisfied if we let $I_1$ index the three lines of $\Delta_{(2)}$ and $I_2$ the elliptic curve $E$.
\end{corollary}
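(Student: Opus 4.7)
The plan is to apply Lemma \ref{lCoverHC} to each of the four irreducible components of $\Delta_{(2)}$, namely the three lines $u=0$, $w=0$, $u+w=0$ and the elliptic curve $E\colon \gamma u+v^3=0$, using the explicit fibre information in Proposition~\ref{pFibersExample}. For each component $C$, it suffices to exhibit one $\F_2$-rational point $p_1\in C$ over which the fibre of $X_{(2)}$ splits as two $\F_2$-rational lines, and one $\F_2$-rational point $p_2\in C$ over which the fibre is a cross of two $\bar{\F}_2$-conjugate lines. Nontriviality of the induced Artin--Schreier class $\alpha_i\in k(\Delta_i)/\wp(k(\Delta_i))$ is equivalent to irreducibility of the corresponding double cover, which is exactly what Lemma~\ref{lCoverHC} delivers.

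Reading off the table of Proposition~\ref{pFibersExample}, the required pairs can be chosen as follows: for $u=0$, take $p_1=(0:1:1)$ and $p_2=(0:0:1)$; for $w=0$, take $p_1=(1:0:0)$ and $p_2=(1:1:0)$; for $u+w=0$, take $p_1=(1:0:1)$ and $p_2=(1:1:1)$; for $E$, take $p_1=(1:0:0)$ and $p_2=(0:0:1)$, both of which lie on $E$ by the fourth column of the table. In each case the $\F_2$-rationality of the point and its membership in the stated component is immediate from the coordinates, and the ``rational lines'' versus ``conjugate lines'' behaviour is precisely what is recorded in Proposition~\ref{pFibersExample}.

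Applying Lemma~\ref{lCoverHC} to each of the four components thus yields that $\alpha_i$ is nontrivial for every $i\in I$. In particular, setting $I_1$ to index the three lines and $I_2=\{E\}$, we may take any $i_0\in I_1$ and $j_0=E\in I_2$, and both $\alpha_{i_0}$ and $\alpha_{j_0}$ are nontrivial. This verifies condition \textit{b)} of Theorem~\ref{tBrauerNontrivial}. The only genuine step is the application of Lemma~\ref{lCoverHC}; the rest is bookkeeping from the fibre table, so there is no substantial obstacle, provided Proposition~\ref{pFibersExample} is taken for granted.
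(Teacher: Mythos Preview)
Your proof is correct and follows exactly the approach the paper intends: the paper's own proof is simply ``Use Lemma~\ref{lCoverHC} and Proposition~\ref{pFibersExample},'' and you have spelled out precisely which points serve as $p_1$ and $p_2$ for each component. One trivial slip: the column for $\gamma u+v^3$ in the table of Proposition~\ref{pFibersExample} is the last (sixth) column, not the fourth, but your point choices are nonetheless correct.
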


\begin{proof}
Use Lemma \ref{lCoverHC} and Proposition \ref{pFibersExample}.
\end{proof}

We now want to check that the $2:1$ covers induced by $X_{(2)}$ over the three lines yield the same element in $H^1 (\ol{\F}_2 (t), \Z /2)$ as the $2:1$ covers in our Example \ref{eHC}. We need this to verify condition \textit{a)} of Theorem \ref{tBrauerNontrivial}. We only have to check that all these covers are birational to each other over the base $\P^1$. For this we use:

\begin{proposition}\label{pArtinSchreierCoversExample}
We work over the ground field $k=\ol{\F}_2$.  
Let $\ol{X} \subset \P^2 \times \P^2$ be a divisor of bidegree $(d,2)$ that is a conic bundle over $\P^2$ via the first projection. 
Let furthermore
$L \subset \P^2$ be a line in the discriminant of $\ol{X}$ such that $\ol{X}$ defines an Artin--Schreier double cover of $L$ branched in 
a single reduced point $R$. Here the scheme-structure on $R$ is defined by viewing it as the scheme-theoretic pull-back of the locus of double lines in the universal discriminant as in Definition \ref{dDiscriminants}. Suppose also that $\ol{X}$, $L$ and $R$ are defined over $\F_2$.

\medskip

Then either the $2:1$ cover defined over $L$ by $\ol{X}|_L$ is trivial, or it is birational over $L$ to the Artin--Schreier cover
\[
	x^2 + x + \frac{v}{u} = 0
\]
where $(u:v)$ are coordinates on $L \cong \P^1$ such that $R = (0:1)$. In particular all non-trivial covers with $R=(0:1)$ that satisfy the above conditions yield the same element in $H^1 (\ol{\F}_2 (t), \Z /2)$. 
\end{proposition}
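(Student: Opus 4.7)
The plan is to realize $\ol{X}|_L\to L$ as an explicit Artin--Schreier cover via the residue formalism of Section~\ref{sBrauerMerkurjev}, and then to pin down the resulting class in $\ol{\F}_2(L)/\wp(\ol{\F}_2(L))$ by combining the three hypotheses that $\ol{X}, L, R$ are defined over $\F_2$, that the cover is branched only at $R$, and that $R$ is reduced. At the generic point of $L$ the fibre is a cross of two distinct lines, so Lemma~\ref{lNormalFormHC} puts the conic bundle locally in the form $ax^2+by^2+xz+z^2=0$ with $b$ a local equation for $L\subset\P^2$. Proposition~\ref{pCupProduct} then identifies the Brauer class as $b\cup a$, and Theorem~\ref{tResConicBundleAlongDiv}(b) identifies its residue along $L$ with $a|_L\in k(L)/\wp(k(L))\cong H^1(k(L),\Z/2)$, so $\ol{X}|_L\to L$ is birationally the Artin--Schreier cover $X^2+X+a|_L=0$. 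Applying Theorem~\ref{tResMerkUnr} at every divisorial valuation of $L$ other than~$R$ shows that $a|_L$ has no poles outside $R$ modulo~$\wp$.

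The delicate step is the local analysis at~$R$. Because the cover is ramified at~$R$, the fibre of $\ol{X}|_L$ over~$R$ is a double line, and after a linear change of fibre coordinates we may assume this double line is $\{x=0\}$; thus $a_{xx}$ is a unit at~$R$ while $a_{yy},a_{zz},a_{xy},a_{xz},a_{yz}$ all vanish at~$R$. The reducedness of~$R$, which by the definition of its scheme structure (cf.\ Definition~\ref{dDiscriminants} and Theorem~\ref{tDiscriminantProfilesChar2}) means the ideal $(a_{xy}|_L,a_{xz}|_L,a_{yz}|_L)\subset\OO_{L,R}$ equals the maximal ideal, then forces---after a further linear mixing of $y,z$ if necessary---$a_{xz}|_L$ to have a simple zero at~$R$. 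Tracking the substitutions in the proof of Lemma~\ref{lNormalFormHC} yields the closed formula $a=a_{xx}a_{zz}/a_{xz}^2$ in the normal form, so $\ord_R(a|_L)=\ord_R(a_{zz}|_L)-2$. Since the cover is ramified at~$R$, $a|_L$ must have an honest pole there modulo~$\wp$; together with $\ord_R(a_{zz}|_L)\ge 1$ this forces $\ord_R(a_{zz}|_L)=1$, and hence $a|_L$ has a simple pole at~$R$, so the cover has conductor~$2$ there.

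To conclude I would descend to $\F_2$. Since $\ol{X}, L, R$ are defined over~$\F_2$, the class $a|_L$ is represented by an element of $\F_2(L)/\wp(\F_2(L))$. Choose coordinates $(u:v)$ on $L\cong\P^1_{\F_2}$ with $R=(0:1)$ and let $s=u/v$, so that $v/u=1/s$ has its only pole at~$R$. A representative in $\F_2(s)$ with at most a simple pole at $s=0$ and regular elsewhere on $\P^1_{\F_2}$ has the shape $c/s+d$ with $c,d\in\F_2$; since $1\notin\wp(\F_2(s))$ (there is no rational solution of $f^2+f=1$ over~$\F_2$) the four resulting classes are distinct in $\F_2(s)/\wp(\F_2(s))$, but after base change to $\ol{\F}_2$ the constant~$1$ lies in $\wp(\ol{\F}_2)$ and so becomes trivial, leaving $v/u$ as the unique nontrivial class in $\ol{\F}_2(L)/\wp(\ol{\F}_2(L))$ with pole only at~$R$ of conductor~$2$. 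Hence $a|_L$ is either in~$\wp$---in which case the cover is trivial---or equals $v/u$, in which case $\ol{X}|_L\to L$ is birational over $L$ to $X^2+X+v/u=0$; the ``in particular'' assertion follows at once from this uniqueness. The main obstacle I expect is the conductor computation in the middle paragraph: the substitutions in Lemma~\ref{lNormalFormHC} become singular at~$R$, and one must carefully see how the reducedness of~$R$, together with the ramification hypothesis and the fact that the fibre at~$R$ is a double line, conspire to give exactly pole order one in the normal-form coefficient~$a$.
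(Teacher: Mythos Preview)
Your route through the residue formalism of Section~\ref{sBrauerMerkurjev} is genuinely different from the paper's argument and can be made to work, but several steps are incomplete as stated.

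The paper's proof avoids the Brauer machinery entirely. It realizes the double cover $Y\to L$ directly (as the relative Grassmannian of lines in the fibres of $\ol X|_L$), embeds $Y$ in $\P(\EE)$ for a rank-$2$ bundle $\EE$ on $L$, and writes it as $ax^2+bxy+cy^2=0$ with $a,b,c$ homogeneous on $L\cong\P^1$ and $\deg a+\deg c=2\deg b$. Since $\{b=0\}$ is the branch locus and $R$ is a single reduced point, $\deg b=1$; hence $\deg(ac)=2$, and the Artin--Schreier normalization gives $x^2+x+ac/b^2=0$ with $ac/b^2=\alpha+\beta(v/u)+\gamma(v/u)^2$ for $\alpha,\beta,\gamma\in\F_2$. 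A short case analysis over $\ol{\F}_2$ (using that $\F_2$ has only two elements) finishes. This buys a clean global degree bound in place of your local pole-order chase.

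In your argument there are three points needing repair. First, the appeal to Theorem~\ref{tResMerkUnr} for ``no poles outside $R$ modulo $\wp$'' is a miscitation---that theorem concerns Brauer classes on $B$, not classes in $H^1(k(L),\Z/2)$; what you need is simply that the cover is \'etale off $R$. Second, and more seriously, your claim that a linear mixing of $y,z$ forces $\ord_R(a_{xz}|_L)=1$ is not justified: such a mixing can only replace $a_{xz}$ by $\F_2$-combinations of $a_{xy}$ and $a_{xz}$, so if only $a_{yz}|_L$ is a uniformizer at $R$ you are stuck. That case is in fact impossible under the ramification hypothesis (the $a_{yz}$-based normal form would give residue $a_{yy}a_{zz}/a_{yz}^2|_L$, regular at $R$ since $a_{yy},a_{zz}$ both vanish there), but you do not argue this. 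Third, the passage from ``one representative with simple pole at $R$, class unramified elsewhere, everything over $\F_2$'' to ``a representative $c/s+d$ with $c,d\in\F_2$'' needs justification: subtract the polar part $c_{-1}/s$ (with $c_{-1}\in\F_2$ the residue at $R$) from your $\F_2(L)$-representative, observe the remainder defines an everywhere-\'etale cover of $\P^1_{\F_2}$, and use $H^1_{\et}(\P^1_{\F_2},\Z/2)\cong\F_2$ (generated by the constant~$1$) to conclude.
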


\begin{proof}
Note that $\ol{X}|_L \to L$ is defined over $\F_2$, and defines a double cover $\pi\colon Y \to L$, where $Y$ is the relative Grassmannian of lines in the fibres of $\ol{X}|_L \to L$. Then $Y\to L$ is also defined over $\F_2$ and flat over $L$. Hence $\EE = \pi_* (\OO_Y)$ is a rank $2$ vector bundle on $L$, and $Y$ can be naturally embedded into $\P (\EE)$. Then $\EE=\OO_L (e) \oplus \OO_L (f)$, and $Y$ is defined inside $\P (\EE )$ by an equation
\[
	a x^2 + b xy + cy^2=0
\]
with $a, b, c$ homogeneous polynomials with $\deg (a) +\deg (c) = 2 \deg (b)$. Notice that $b=0$ defines the locus of points of the base $L$ over which the fibre is a double point. By our assumption $b=0$ is a single reduced point. Hence $\deg (b) =1$. 

Notice that if the double cover $Y$ is nontrivial, both $a$ and $c$ are nonzero, hence $\deg (a) \ge 0, \deg (c) \ge 0$ and $\deg (a) +\deg (c) = 2$. 

\medskip

Let $(u:v)$ be homogeneous coordinates on $L$.
We now put $a'=a/u^{\deg (a)}, b' = b/u^{\deg (b)}, c' = c/u^{\deg{c}}$ and calculate over the function field of $L$.  
Apply $(x,y) \mapsto (b'x,a'y)$ to obtain
\[
	a' (b')^2 x^2 + a'(b')^2 xy + (a')^2c'y^2=0.
\]
Divide by $a'(b')^2$ and dehomogenise via $y\mapsto 1$ to obtain the Artin--Schreier normal form
\[
	x^2 + x + \frac{ac}{b^2}=0.
\]
We now use the fact that we can choose the coordinates $(u:v)$ such that $b=u$. We can write $ac = \alpha u^2 + \beta uv + \gamma v^2$ with $\alpha, \beta, \gamma \in \F_2$:
\[
	x^2 + x + \alpha + \beta\frac{v}{u} + \gamma \frac{v^2}{u^2}=0.
\]
Now we use extensively the fact that we work over $\F_2$:
firstly, either $\alpha=0$ or $\alpha =1$. In the second case let $\rho \in \ol{\F}_2$ be a root of $x^2+x+1$ and apply $x \mapsto x+\rho$. This gives 
\[
	x^2 + x + \beta\frac{v}{u} + \gamma \frac{v^2}{u^2} =0
\]
in  both cases. 
Even though the transformation was defined over $\ol{\F}_2$ this does not change the fact that $\beta$ and $\gamma$
are in $\F_2$. 

Secondly either $\gamma = 0$ and we have
\[
	x^2 + x + \beta \frac{v}{u} = 0
\]
or $\gamma = 1$ and we apply $x\mapsto x+\frac{v}{u}$ to obtain
\[
	x^2 + x +  (\beta+1)\frac{v}{u} = 0.
\] 
In both cases the coefficient in front of $\frac{v}{u}$ is either $0$ or $1$, thus the cover is either trivial or has the normal form
\[
	x^2 + x + \frac{v}{u} = 0.
\] 
\end{proof}

\begin{remark}
Notice that the proof works over any field $k$ of characteristic $2$ until we have
\[
	x^2 + x + \beta\frac{v}{u} + \gamma \frac{v^2}{u^2} = 0.
\]
Now we can eliminate $\gamma$ only if it is a square in $k$. Even if this happens (for example if we work over $\ol{\F}_2$) we obtain, using $x \mapsto x + \sqrt{\gamma}(v/u)$, 
\[
x^2 + x + \underbrace{\Bigl(\beta + \sqrt{\gamma} \Bigr)}_{\beta'} \frac{v}{u} = 0.
\]
So there seems to be a $1$-dimensional moduli space of such covers.
\end{remark}

Note that Propositions \ref{pDiscDescription}, \ref{pFibersExample},
Corollary \ref{cCoversExample}, and Proposition
\ref{pArtinSchreierCoversExample} together with the conic bundle
exhibited in Example \ref{eHC} show that Theorem
\ref{tBrauerNontrivial} is applicable in the case of $X_{(2)}$, hence
$\Br_\ur(\ol{\F}_2 (X_{(2)}))[2] \neq 0$.

\subsection{A $\mathrm{CH}_0$-universally trivial resolution of $X_{(2)}$}\label{ssChowZeroRes}
Now we conclude the proof of Theorem \ref{tInteresting} by showing the
remaining assertion \textit{c)}, the existence of a
$\mathrm{CH}_0$-universally trivial resolution of singularities
$\sigma \colon \wt{X}_{(2)} \to X_{(2)}$.

We will use the following criterion \cite[Ex.~2.5~(1),(2),(3)]{Pi16} which summarizes results of \cite{CT-P16} and \cite{CTP16-2}.

\begin{proposition}\label{pChowTrivCrit}
A sufficient condition for a projective morphism $f \colon V \to W$ of
varieties over a field $k$ to be $\mathrm{CH}_0$-universally trivial
is that the fibre $V_{\xi}$ of $f$ over every scheme-theoretic point
$\xi$ of $W$ is a (possibly reducible) $\mathrm{CH}_0$-universally
trivial variety over the residue field $\kappa (\xi)$ of the point
$\xi$. This sufficient condition in turn holds if $X_{\xi}$ is a
projective (reduced) geometrically connected variety, breaking up into
irreducible components $X_i$ such that each $X_i$ is
$\mathrm{CH}_0$-universally trivial and geometrically irreducible, and
such that each intersection $X_i\cap X_j$ is either empty or has a
zero-cycle of degree $1$ (of course the last condition is automatic if
$\kappa (\xi)$ is algebraically closed).

Moreover, a smooth projective retract rational variety $Y$ over any field is universally $\mathrm{CH}_0$-trivial. If $Y$ is defined over an algebraically closed ground field, one can replace the smoothness assumption on $Y$ by the requirement that $Y$ be connected and each component of $Y^{\mathrm{red}}$ be a rational variety with isolated singular points.  \end{proposition}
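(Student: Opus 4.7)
Since the proposition is essentially a compilation of results from \cite{CT-P16} and \cite{CTP16-2} as packaged in \cite[Ex.~2.5~(1),(2),(3)]{Pi16}, my plan is to extract the three assertions separately, adapting the arguments only as far as needed for the application to $\sigma\colon\widetilde{X}_{(2)}\to X_{(2)}$ in Section~\ref{ssChowZeroRes}.

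For the first (fibration) assertion, I would argue by Noetherian induction on $\dim W$. After base change to an arbitrary field extension $F/k$, it suffices to show that $f_F$ induces an isomorphism $\mathrm{CH}_0(V_F)\isomto \mathrm{CH}_0(W_F)$. Stratify $W_F$ by a chain $W_F\supset W_1\supset W_2\supset\cdots$ of closed subsets such that the complements are smooth, and apply the localization sequence for Chow groups of $V_F$ along $f_F^{-1}(W_i)$. At each stratum the contribution to $\mathrm{CH}_0$ is controlled by zero-cycles on the fibres of $f$, and the hypothesis of universal $\mathrm{CH}_0$-triviality of every fibre $V_\xi$ propagates under base change to the residue field $\kappa(\eta)$ of any point $\eta\in W_F$ lying above $\xi$. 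This makes both the kernel and cokernel of the restricted pushforward on zero-cycles trivial at each stratum, concluding the induction.

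For the gluing statement in the second part, the plan is a correspondence argument. Given $X_\xi^{\mathrm{red}}=\bigcup_i X_i$ with each $X_i$ geometrically irreducible and universally $\mathrm{CH}_0$-trivial, and zero-cycles $z_{ij}\in\mathrm{CH}_0(X_i\cap X_j)$ of degree $1$ whenever $X_i\cap X_j\neq\emptyset$, I would define a combinatorial graph with vertex set $\{X_i\}$ and edges given by nonempty intersections. Geometric connectedness of $X_\xi$ forces this graph to be connected. For any field extension $L/\kappa(\xi)$, the hypothesis gives $\mathrm{CH}_0(X_{i,L})\isom\Z$, with generator any zero-cycle of degree $1$; the cycles $z_{ij}$, viewed inside $X_i$ or $X_j$, then identify these generators inside $\mathrm{CH}_0(X_{\xi,L}^{\mathrm{red}})$ along every edge. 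Walking the graph shows that the pushforwards of all the generators coincide, so $\mathrm{CH}_0(X_{\xi,L}^{\mathrm{red}})=\Z$, which is precisely universal $\mathrm{CH}_0$-triviality.

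For the third assertion, the plan is the standard diagonal decomposition: a smooth projective retract rational $Y$ over $k$ admits, after passing to the generic point, a rational section of some dominant rational map $\P^n\dashrightarrow Y$, which yields a cycle identity $\Delta_Y=Y\times\{y_0\}+Z$ in $\mathrm{CH}_{\dim Y}(Y\times Y)$ with $Z$ supported on $Y\times D$ for a proper closed $D\subsetneq Y$; acting on $\mathrm{CH}_0(Y_L)$ by this correspondence gives universal $\mathrm{CH}_0$-triviality. For the algebraically closed refinement, the plan is to note that a rational variety over an algebraically closed field has $\mathrm{CH}_0=\Z$ universally and that the isolated singular points on $Y^{\mathrm{red}}$ all have residue field $k$, so that they can be handled by ad hoc desingularization without affecting zero-cycles; the gluing from the second assertion then takes care of reducibility of $Y^{\mathrm{red}}$. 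The step I expect to be most delicate is the first (fibration) assertion, where the base-change compatibility of the localization sequence and the universality of the hypothesis on fibres must be kept track of uniformly across the stratification; the remaining assertions are correspondence-theoretic bookkeeping.
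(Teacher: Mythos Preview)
The paper does not actually prove this proposition: it is stated as a criterion, with the sentence immediately preceding it attributing the content to \cite[Ex.~2.5~(1),(2),(3)]{Pi16}, which in turn summarizes \cite{CT-P16} and \cite{CTP16-2}. There is no argument in the paper to compare against. Your proposal therefore goes beyond what the paper does, supplying proof sketches where the paper is content to cite.

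That said, your sketches are broadly in line with the original arguments in the cited sources. Two small comments. For the fibration assertion, the proof in \cite[Prop.~1.8]{CT-P16} does not stratify $W$ and run localization sequences; it argues directly that $f_*$ is surjective (each closed point of $W_F$ lifts via a degree-$1$ zero-cycle on the fibre) and injective (a zero-cycle on $V_F$ with trivial pushforward is, after moving along curves in $W_F$, supported in fibres, where it dies by the hypothesis on fibres). Your stratification approach can likely be made to work, but it is not the standard route and you would need to be careful that the localization sequence for $\mathrm{CH}_0$ is doing what you want at each step. For the final refinement over an algebraically closed field, ``ad hoc desingularization'' is not what is actually used: the point is simply that over an algebraically closed field every closed point already has residue field $k$, so isolated singular points contribute only a copy of $\Z$ to $\mathrm{CH}_0$, and on a rational variety over such a field any two closed points are rationally equivalent; no resolution is needed.
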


We now study the behaviour of $X_{(2)}$ locally above a point $P$ on the base $\P^2$, distinguishing several cases; for the cases when $X_{(2)}$ is singular locally above $P$, we exhibit an explicit blow-up scheme to desingularise it, with exceptional locus $\mathrm{CH}_0$-universally trivial, so that Proposition \ref{pChowTrivCrit} applies. 

\medskip

\noindent \textbf{(i) The case when $P \not\in \Delta_{(2)}$.} In that case, $X_{(2)}$ is nonsingular locally above $P$.

\medskip

\noindent \textbf{(ii) The case when $P$ is in the smooth locus of $\Delta_{(2)}$.} In that case, $X_{(2)}$ is nonsingular locally above $P$ as well. This can be seen by direct computation \cite{ABBBM2}.

\medskip

\noindent \textbf{(iii) The case when $P= (0:1:0)$ is the intersection point of the three lines $(u=0)$, $(w=0)$, $(u+w=0)$ in $\Delta_{(2)}$.} A direct computation shows that here $X_{(2)}$ is nonsingular locally above $P$ as well \cite{ABBBM2}.

\medskip

\noindent \textbf{(iv) The case when $P$ is one of the six intersection points of $w=0$ or $u+w=0$ with $E$.} In these cases, the intersection of the two discriminant components is transverse, and the fibre above the intersection point is a cross of lines. Then $X_{(2)}$ locally above $P$ has a $\mathrm{CH}_0$-universally trivial desingularization because we have the local normal form as in Lemma \ref{lNormalFormConicExample} with $n=1$, and thus, by Proposition \ref{pBlowUpScheme} and Proposition \ref{pExceptionalDivisors}, one blow-up with exceptional divisor a smooth quadric resolves the single singular point of $X_{(2)}$ above $P$.

\medskip

\noindent \textbf{(v) The case when $P=(0:0:1)$ is the point where the components $(u=0)$ and $E$ of $X_{(2)}$ intersect in such a way that $(u=0)$ is an inflectional tangent to the smooth elliptic curve $E$.} In this case, the fibre of $X_{(2)}$ above $P$ is a cross of two conjugate lines by Proposition \ref{pFibersExample}. We need some auxiliary results.

\begin{lemma}\label{lNormalFormConicExample}
Let $\hat{\A}^2_{\ol{\F}_2}$ be the completion of $\A^2_{\ol{\F}_2}$ with affine coordinates $u, v$ along $(0,0)$, and let $\ol{X}$ be a conic bundle over $\hat{\A}^2_{\ol{\F}_2}$. Thus $\ol{X}$ has an equation
\[
	c_{xx} x^2 + c_{xy} xy + c_{yy} y^2 + c_{xz} xz + c_{yz} yz + c_{zz} z^2 = 0
\]
where the $c$'s are formal power series in $u$ and $v$ with coefficients in $\ol{\F}_2$. 

Assume that 
\begin{enumerate}
\item \label{iDegThree} locally around $(0,0)$ the discriminant of $\ol{X}$ has a local equation $u(u+v^n)$, $n\ge 1$.
\item \label{iNonTrivial}The fiber over $(0,0)$ has the form $x^2+xy+y^2$
\end{enumerate}

Then, after a change in the fibre coordinates $x, y$ and $z$, we can assume the normal form
\[
	x^2+xy+c_{yy}y^2 + c_{zz}z^2 = 0
\]
with $c_{yy}$ a unit, $c_{zz} = \beta u(u+v^n)$ and $\beta$ a unit.
\end{lemma}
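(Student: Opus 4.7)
The plan is a sequence of elementary substitutions in the fibre coordinates, followed by a discriminant computation using formula~\eqref{fDisc2}. First I would use hypothesis~\eqref{iNonTrivial} to read off the constant terms of the coefficients: evaluating at $(u,v)=(0,0)$ must give (a nonzero scalar multiple of) $x^2+xy+y^2$, so $c_{xx}$, $c_{xy}$, $c_{yy}$ have constant term~$1$ and in particular are units in $\ol{\F}_2[[u,v]]$, while $c_{xz}$, $c_{yz}$, $c_{zz}$ lie in the maximal ideal. Dividing the whole equation through by the unit $c_{xy}$ we may assume $c_{xy}=1$.

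Next I would kill the mixed terms $xz$ and $yz$ by the substitution
\[
x \mapsto x + c_{yz}\, z, \qquad y \mapsto y + c_{xz}\, z, \qquad z \mapsto z,
\]
which is a legitimate automorphism of the completed local ring since $c_{xz}, c_{yz}$ are in the maximal ideal. A short computation in characteristic~$2$ (all cross terms $2\alpha\beta$ vanish) shows that after this substitution the $xy$-coefficient is still $1$, the $x^2$- and $y^2$-coefficients are unchanged, and the new coefficients of $xz$ and $yz$ become $c_{xz}+c_{xz}=0$ and $c_{yz}+c_{yz}=0$, respectively. The coefficient of $z^2$ becomes some new power series $c'_{zz}\in \ol{\F}_2[[u,v]]$. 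Finally the substitution $x\mapsto c_{xx}^{-1}x$ followed by multiplication of the equation by $c_{xx}$ normalises the $x^2$-coefficient to $1$ while preserving the $xy$-coefficient; this replaces $c_{yy}$ by the unit $c_{xx}c_{yy}$ and $c'_{zz}$ by $c_{xx}c'_{zz}$. After renaming, the equation now has the shape
\[
x^2 + xy + c_{yy}\, y^2 + c_{zz}\, z^2 = 0
\]
with $c_{yy}$ a unit, which is the asserted normal form modulo the claim about $c_{zz}$.

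It remains to identify $c_{zz}$ up to a unit. For this I would apply the universal discriminant formula~\eqref{fDisc2}: for a form with $a_{xx}=1$, $a_{xy}=1$, $a_{xz}=a_{yz}=0$, $a_{yy}=c_{yy}$, $a_{zz}=c_{zz}$, the expression $a_{xy}a_{yz}a_{xz}+a_{xz}^2 a_{yy}+a_{yz}^2 a_{xx}+a_{xy}^2 a_{zz}$ collapses to $c_{zz}$. By Definition~\ref{dDiscriminants} the divisor cut out by this polynomial is the (scheme-theoretic) discriminant of the conic bundle, and this divisor is invariant under the fibrewise $\mathrm{PGL}_3$-action used in all the substitutions above. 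Hypothesis~\eqref{iDegThree} then gives $(c_{zz})=(u(u+v^n))$ as ideals in $\ol{\F}_2[[u,v]]$, i.e.\ $c_{zz}=\beta\, u(u+v^n)$ for a unit $\beta$.

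The proof is essentially a bookkeeping exercise; the only point where one must be attentive is that the substitution eliminating the $xz,yz$ terms in characteristic~$2$ relies on $c_{xy}$ being a unit (which is precisely ensured by the fibre condition at the origin), and that one must invoke the $\mathrm{PGL}_3$-invariance of the universal discriminant to transfer hypothesis~\eqref{iDegThree} from the original equation to the normalised one.
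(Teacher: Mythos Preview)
Your argument is correct and follows essentially the same route as the paper's proof: use hypothesis~\ref{iNonTrivial} to ensure $c_{xx},c_{xy},c_{yy}$ are units, normalise so that $c_{xy}=1$, apply the substitution $x\mapsto x+c_{yz}z$, $y\mapsto y+c_{xz}z$ to kill the $xz$- and $yz$-terms, then scale to get $c_{xx}=1$, and finally identify $c_{zz}$ via the discriminant formula~\eqref{fDisc2} and hypothesis~\ref{iDegThree}. The only cosmetic difference is the order in which you normalise $c_{xx}$ and $c_{xy}$; one harmless slip is the remark that the substitution is ``a legitimate automorphism of the completed local ring since $c_{xz},c_{yz}$ are in the maximal ideal'' --- this is a linear change of fibre coordinates with unit determinant and is invertible regardless.
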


\begin{proof}
Because of assumption $(\ref{iNonTrivial})$ we can assume that $c_{xx}$ is a unit. After dividing by $c_{xx}$ we 
can assume that we have the form
\[
	x^2 + c_{xy} xy + c_{yy} y^2 + c_{xz} xz + c_{yz} yz + c_{zz} z^2 = 0
\]
with $c_{xy}$ and $c_{yy}$ units. After the substition of $x\mapsto c_{xy} x$ we can divide the whole equation
by $c_{xy}^2$ and can assume that we have the form
\[
	x^2 + xy + c_{yy} y^2 + c_{xz} xz + c_{yz} yz + c_{zz} z^2 = 0
\]
with $c_{yy}$ a unit. Now substituting $x\mapsto x+c_{yz}z$ and $y \mapsto y+c_{xz}z$ we obtain the normal 
form
\[
	x^2 + xy + c_{yy} y^2 + c_{zz} z^2 = 0
\]
with $c_{yy}$ still a unit. Now the discriminant of this conic bundle ist $c_{zz}$. Since the discriminant was changed at most
by a unit during the normalization process above, we have $c_{zz} = \beta u(u+v^n)$ as claimed.
\end{proof}

\begin{proposition}\label{pBlowUpScheme}
Let 
$Y$ be a hypersurface in $\hat{\A}^4_{\ol{\F}_2}$ with coordinates $x,y,u,v$, with equation
\[
   x^2 + xy + \alpha y^2 + \beta u(u+v^n) = 0, \quad n\ge 1,
\]
where $\alpha$ and $\beta$ are units in $\ol{\F}_2 [[u,v]]$. Then $Y$ is singular only at the origin. 

Let $\widetilde{\A^4}$ be the blow up of
$\hat{\A}^4_{\ol{\F}_2}$ in the origin and let $\widetilde{Y} \subset \widetilde{\A^4}$ be the strict transform of $Y$. If $n=1$, then $\widetilde{Y}$ is smooth. If $n>1$, then $\widetilde{Y}$ is singular at only one point, which we can assume to be the origin again.
Around this singular point $\widetilde{Y}$ has  a local equation
\[
   x^2 + xy + \alpha' y^2 + \beta' u(u+v^{n-1}) = 0
\]
with $\alpha'$ and $\beta'$ units in $\ol{\F}_2[[u,v]]$.
\end{proposition}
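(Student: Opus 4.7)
The argument has three phases: verify singularity is isolated at the origin, pass to the strict transform in each of the four affine charts of the blow-up, and identify the output normal form in the one chart that matters.

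\textbf{Phase 1: Singular locus of $Y$.} Writing $F = x^2 + xy + \alpha y^2 + \beta u(u+v^n)$, in characteristic $2$ one has $\partial F/\partial x = y$ and $\partial F/\partial y = x$, so any singular point satisfies $x=y=0$. On that locus $F = \beta u(u+v^n)$, so $F=0$ forces $u(u+v^n)=0$ since $\beta$ is a unit. On each of the two branches $u=0$ and $u=v^n$, a direct computation of $\partial F/\partial u$ and $\partial F/\partial v$ (keeping in mind that $\partial/\partial u(u(u+v^n))=v^n$ and $\partial/\partial v(u(u+v^n)) = nuv^{n-1}$ in characteristic $2$) shows that both partials vanish only at $u=v=0$. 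Thus the origin is the unique singular point of $Y$.

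\textbf{Phase 2: Strict transforms in the four charts.} I will use the standard affine charts $(x\text{-chart}), (y\text{-chart}), (u\text{-chart}), (v\text{-chart})$ of the blow-up. In the $x$-chart, setting $y = xy'$, $u = xu'$, $v = xv'$ and dividing by $x^2$ yields the strict transform
\[
    1 + y' + \alpha(y')^2 + \beta u'\bigl(u' + x^{n-1}(v')^n\bigr),
\]
whose $y'$-derivative is $1$, hence smooth. The $y$-chart is analogous by symmetry (the $x'$-derivative equals $1$). In the $u$-chart, setting $x=ux'$, $y=uy'$, $v=uv'$ and factoring $u^2$ out, one gets
\[
    (x')^2 + x'y' + \alpha(y')^2 + \beta\bigl(1 + u^{n-1}(v')^n\bigr),
\]
and a direct case check --- vanishing of $y',x'$ and of $F$ force $1+u^{n-1}(v')^n=0$, and then computing $\partial/\partial u$ and $\partial/\partial v'$ (whose nonvanishing depends on the parity of $n$ in $\F_2$) --- shows no singular point lies in this chart.

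The $v$-chart is the decisive one. Setting $x = vx'$, $y=vy'$, $u=vu'$ and using $vu'+v^n = v(u'+v^{n-1})$, we obtain, after dividing by $v^2$, the strict transform
\[
    (x')^2 + x'y' + \alpha'(y')^2 + \beta' u'\bigl(u' + v^{n-1}\bigr),
\]
where $\alpha' = \alpha(vu',v)$ and $\beta' = \beta(vu',v)$ are still units in $\ol{\F}_2[[u',v]]$. For $n=1$ this becomes $(x')^2 + x'y' + \alpha'(y')^2 + \beta' u'(u'+1)$, and the argument of Phase 1 (repeated on this local equation, noting $u'(u'+1)$ has $\partial/\partial u' \equiv 1$) shows smoothness. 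For $n>1$, the equation has precisely the same shape as the original hypersurface with $n$ replaced by $n-1$, so Phase 1 applied to it identifies a unique singular point, namely the origin of the $v$-chart.

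\textbf{Phase 3: Conclusion.} Assembling the four charts gives that for $n=1$ the strict transform $\widetilde{Y}$ is smooth, while for $n>1$ it has exactly one singular point, lying in the $v$-chart, with local equation of the same form as the input but with exponent $n-1$ and with the new unit coefficients $\alpha' = \alpha(vu',v)$, $\beta' = \beta(vu',v)$. The main obstacle is simply the disciplined bookkeeping in Phase 2, in particular tracking how $\alpha$ and $\beta$ transform under the substitutions and checking that the characteristic-$2$ peculiarities (vanishing of squares under differentiation, $n-1\bmod 2$) do not introduce stray singularities away from the expected locus; once this is handled, the recursive structure in the $v$-chart is what makes the statement fall out cleanly.
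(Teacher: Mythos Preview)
Your proof is correct and follows essentially the same chart-by-chart strategy as the paper: use the $x$- and $y$-partials (which are $y$ and $x$) to force $x=y=0$, then work through the four standard affine charts of the blow-up, finding smoothness in the $x$-, $y$-, and $u$-charts and the recursive normal form in the $v$-chart. Your Phase~1 is spelled out more explicitly than in the paper (which does not separately verify that the singular locus of $Y$ is the origin), and your handling of the $u$-chart via the parity of $n$ is a bit more cautious than the paper's one-line claim that $1+u^{n-1}(v')^n$ is a unit; apart from these minor differences in presentation the two arguments coincide.
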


\begin{proof}
In $\widetilde{\A^4}$, we obtain $4$ charts. It will turn out that in three of them $\widetilde{Y}$ is smooth and in the fourth we obtain the local equation given above. 

\begin{enumerate}
\item $(x,y,u,v) \mapsto (x,xy,xu,xv)$ gives
\[
   x^2 + x^2y + \alpha' x^2y^2 + \beta' xu(xu+x^nv^n) = 0
\]
as the total transform, and 
\[
   1 + y + \alpha' y^2 + \beta' u(u+x^{n-1}v^n)  = 0
\]
as the strict transform. Notice that $\alpha'$ and $\beta'$
are power series that only involve $u,v$ and $x$. Therefore the derivative with respect
to $y$ is $1$ in both cases and the strict transform is
smooth in this chart.

\item $(x,y,u,v) \mapsto (xy,y,yu,yv)$ gives
\[
   x^2y^2 + xy^2 + \alpha' y^2 + \beta' yu(yu+y^nv^n) = 0
\]
as the total transform, and 
\[
   x^2 + x + \alpha' + \beta' u(u+y^{n-1}v^n)  = 0
\]
as the strict transform. Notice that $\alpha'$ and $\beta'$
are power series that only involve $u,v$ and $y$. Therefore the derivative with respect
to $x$ is $1$ in both cases and the strict transform is
smooth in this chart. 

\item $(x,y,u,v) \mapsto (xu,yu,u,uv)$ gives
\[
x^2u^2 + xyu^2 + \alpha' y^2u^2 + \beta' u(u+u^nv^n)  = 0
 \]
as the total transform, and 
\[
x^2 + xy + \alpha' y^2 + \beta' (1+u^{n-1}v^n)  = 0
   \]
as the strict transform. Notice that $\alpha'$ and $\beta'$
are power series that only involve $u,v$. Therefore the derivative with respect
to $x$ and $y$ are $y$ and $x$ respectively. So the singular locus lies on $x=y=0$. Substituting
this into the equation of the strict transform we get
\[
	\beta' (1+u^{n-1}v^n)  = 0
\]
This is impossible since $\beta'$ and $(1+u^{n-1}v^n)$ are units. Therefore the strict transform
is smooth in this chart.

\item $(x,y,u,v) \mapsto (xv,yv,uv,v)$ gives
\[
	 x^2v^2 + xyv^2 + \alpha' y^2v^2 + \beta' uv(uv+v^n) = 0
 \]
as the total transform, and 
\[
	 x^2 + xy + \alpha' y^2 + \beta' u(u+v^{n-1}) = 0
 \]
as the strict transform.  Notice that $\alpha'$ and $\beta'$
are power series that only involve $u,v$. Therefore the derivative with respect
to $x$ and $y$ are $y$ and $x$ respectively. So the singular locus lies on $x=y=0$. Substituting
this into the equation of the strict transform we get
\[
	\beta' u(u+v^{n-1}) = 0.
\]
Let us now look at the derivative with respect to $u$:
\[
	\frac{d\alpha'}{du} y^2 +  \frac{d\beta'}{du} u(u+v^{n-1}) + \beta'  v^{n-1} =0 
\]
Since $x=y=u(u+v^{n-1})=0$ on the singular locus, this equation reduces to $v^{n-1}=0$. If $n=1$, this shows that $\widetilde{Y}$ is smooth everywhere. If $n\ge 2$, we obtain that the strict transform is singular at most at $x=y=u=v=0$ in this chart. To check that this
is indeed a singular point we also calculate the derivative with respect to $v$:
\[
	\frac{d\alpha'}{dv} y^2 +  \frac{d\beta'}{dv} u(u+v^{n-1})  + \beta' (n-1)uv^{n-2} =0 
\]
which is automatically satisfied at $x=y=u=v=0$. 
\end{enumerate}

This proves all claims of the proposition. 
\end{proof}

\begin{proposition}\label{pExceptionalDivisors}
Keeping the notation of Proposition \ref{pBlowUpScheme}, the exceptional divisor of $\widetilde{Y} \to Y$ is a quadric with at most one singular point. 
\end{proposition}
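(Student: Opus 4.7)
The plan is to identify the exceptional divisor of $\widetilde{Y}\to Y$ with the projectivized tangent cone of $Y$ at the origin, realize it explicitly as a quadric in the exceptional $\P^3$ of the blow-up $\widetilde{\A}^4 \to \hat{\A}^4_{\ol{\F}_2}$, and analyze its singular locus using a characteristic~$2$ Jacobian criterion.

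First, I would note that the defining equation
\[
f = x^2 + xy + \alpha y^2 + \beta u(u+v^n)
\]
of $Y$ has multiplicity exactly $2$ at the origin (already the $xy$ term is quadratic), so the exceptional divisor of $\widetilde{Y} \to Y$ is the projectivized tangent cone, cut out in $E \cong \P^3 = \mathrm{Proj}\,\ol{\F}_2[x,y,u,v]$ by the degree $2$ initial form of $f$. Writing $\alpha_0 = \alpha(0,0)$ and $\beta_0 = \beta(0,0)$, both nonzero since $\alpha$ and $\beta$ are units, this initial form is
\[
Q_n = x^2 + xy + \alpha_0 y^2 + \beta_0 u^2 \qquad (n \geq 2),
\]
and
\[
Q_1 = x^2 + xy + \alpha_0 y^2 + \beta_0 u^2 + \beta_0 uv \qquad (n=1);
\]
the extra term $\beta_0 uv$ appears only for $n=1$ because $\beta u v^n$ has degree $n+1$. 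In either case, the exceptional divisor is a quadric surface in $\P^3$, which is the first half of the statement.

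Next I would determine the singular locus by computing partial derivatives, remembering that in characteristic~$2$ a point is singular on a quadric precisely when all four partials vanish \emph{and} the quadric itself vanishes (Euler's relation being trivial for quadrics in char~$2$). For $n=1$, the partials are $y,\ x,\ \beta_0 v,\ \beta_0 u$, which vanish only at the vertex of $\A^4$ and hence define no point of $\P^3$; so $Q_1$ is smooth, matching the fact that $\widetilde Y$ itself is smooth when $n=1$ by Proposition~\ref{pBlowUpScheme}. For $n \geq 2$, the partials reduce to $y,\ x,\ 0,\ 0$, vanishing along the line $\{x=y=0\}\subset\P^3$; restricting $Q_n$ to this line gives $\beta_0 u^2=0$, so the singular locus consists of the single point $(0{:}0{:}0{:}1)$.

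The calculation is essentially routine; the one thing to watch is the characteristic~$2$ phenomenon that $\partial_u(u^2) = 0$ kills the quadratic contribution in directions $u,v$ whenever $n\geq 2$, which is precisely what forces the singularity to appear at $(0{:}0{:}0{:}1)$ and no longer at any point when $n=1$. The rest is careful bookkeeping of which terms of $f$ actually enter the initial form.
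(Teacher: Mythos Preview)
Your argument is correct and follows the same approach as the paper: identify the exceptional divisor with the projectivized tangent cone, write down the degree~$2$ initial form (distinguishing $n=1$ from $n\ge 2$), and observe that the resulting quadric is smooth or has a unique singular point respectively. The paper simply asserts the last step, whereas you supply the explicit characteristic~$2$ Jacobian check; this is welcome additional detail but not a different route.
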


\begin{proof}
Recall that the equation of $Y$ is
\[
   x^2 + xy + \alpha y^2 + \beta u(u+v^n) = 0.
\]
We see immediately that the leading term around the origin is
\[
x^2 + xy + \alpha_0 y^2 + \beta_0 u^2
\]
for $n>1$ with $\alpha_0$, $\beta_0$ nonzero constants, and
\[
x^2 + xy + \alpha_0 y^2 + \beta_0 u^2 +\beta_0 uv
\]
for $n=1$. The first is a quadric cone with an isolated singular point, the second is a smooth quadric.
\end{proof}

Summarizing, we see that Lemma \ref{lNormalFormConicExample},
Proposition \ref{pBlowUpScheme}, and Proposition~\ref{pExceptionalDivisors} show that, locally around the singular
point lying above $P=(0:0:1)$, the conic bundle $X_{(2)}$ has a
resolution of singularities with $\mathrm{CH}_0$-universally trivial
fibres. By Proposition \ref{pChowTrivCrit}, and taking into account
cases (i)-(v) above, we conclude that $X_{(2)}$ has a
$\mathrm{CH}_0$-universally trivial resolution of singularities
$\sigma \colon \wt{X}_{(2)} \to X_{(2)}$. This concludes the proof of
Theorem \ref{tInteresting}.

\providecommand{\bysame}{\leavevmode\hbox to3em{\hrulefill}\thinspace}
\providecommand{\href}[2]{#2}

\end{document}